%%
% Sample article using the amsart style
% Based on an example provided by AMS
% modified somewhat for use at NIU Math Dept.
%
%\documentstyle{amsart}
\documentclass{amsart}

\usepackage{color}
\usepackage{amssymb}
\usepackage{amsmath}
\usepackage{hyperref}
%%
% BibTeX logo; normally not needed
\def\BibTeX{{\rm B\kern-.05em{\sc i\kern-.025em b}\kern-.08em
    T\kern-.1667em\lower.7ex\hbox{E}\kern-.125emX}}

\hfuzz1pc % Don't bother to report overfull boxes if overage is < 1pc

%       Theorem environments
% theorem numbering gets reset to 1 in each section
\newtheorem{thm}{Theorem}[section]
% most others are numbered together with theorems

\newtheorem{lem}[thm]{Lemma}
\newtheorem{prop}[thm]{Proposition}

\newtheorem{rem}[thm]{Remark}
\newtheorem*{claim*}{Claim}
%%
% just an example of what will happen if you skip the [thm]
% part -- conjectures will be numbered consecutively

\theoremstyle{definition}

  % to make the notation environment unnumbered

%%
% equation counter will be reset at the start of each section
\numberwithin{equation}{section}

%       Math definitions
% Blackboard Bold letters

% operators

\usepackage{bbm}

\providecommand{\bysame}{\leavevmode\hbox to3em{\hrulefill}\thinspace}
\providecommand{\MR}{\relax\ifhmode\unskip\space\fi MR }
% \MRhref is called by the amsart/book/proc definition of \MR.

\providecommand{\href}[2]{#2}

\begin{document}
\allowdisplaybreaks
\title[Exponential ergodicity of an affine two-factor model] % running head version
{Exponential ergodicity of an affine two-factor model based on the $\alpha$-root process}

\author{Peng Jin, Jonas Kremer and Barbara R\"udiger}
\address[Peng Jin]{Fakult\"at f\"ur Mathematik und Naturwissenschaften\\
        Bergische Universit\"at Wuppertal\\
        42119 Wuppertal, Germany}
%% Note the doubled @@:
\email[Peng Jin]{jin@uni-wuppertal.de}
\address[Jonas Kremer]{Fakult\"at f\"ur Mathematik und Naturwissenschaften\\
        Bergische Universit\"at Wuppertal\\
        42119 Wuppertal, Germany}
%% Note the doubled @@:
\email[Jonas Kremer]{j.kremer@uni-wuppertal.de}
\address[Barbara R\"udiger]{Fakult\"at f\"ur Mathematik und Naturwissenschaften\\
        Bergische Universit\"at Wuppertal\\
        42119 Wuppertal, Germany}
%% Note the doubled @@:
\email[Barbara R\"udiger]{ruediger@uni-wuppertal.de}

\date{\today}

\subjclass[2010]{Primary 60J25, 37A25; Secondary 60J35, 60J75}

\keywords{Affine process; exponential ergodicity; $\alpha$-root process; transition density; Foster-Lyapunov function}

\maketitle

\begin{abstract}
We study an affine two-factor model introduced by Barczy \textit{et al.} (2014).
One component of this two-dimensional model is the so-called $\alpha$-root process, which generalizes the well known CIR process. In this paper, we
show that this affine two-factor model is exponentially ergodic when
$\alpha\in(1,2)$.

\end{abstract}

\section{Introduction}
\label{intro}
In this paper, we study a two-dimensional affine process $(Y,X):=(Y_{t},X_{t})_{t\geqslant0}$ determined by the following stochastic differential equation
\begin{equation}
\begin{cases}
 \mathrm{d}Y_{t}=(a-bY_{t})\mathrm{d}t+\sqrt[\alpha]{Y_{t-}}\mathrm{d}L_{t}, & t\geqslant0,  \quad Y_0\geqslant 0 \quad \mbox{a.s.},    \\
\mathrm{d}X_{t}=(m-\theta X_{t})\mathrm{d}t+\sqrt{Y_{t}}\mathrm{d}B_{t}, & t\geqslant0,
\end{cases}\label{eq:SDE}
\end{equation}
where $a>0,b>0,\theta,m\in\mathbb{R}$, $\alpha\in(1,2)$, $(L_{t})_{t\geqslant0}$
is a spectrally positive $\alpha$-stable L\'evy process with the L\'evy
measure $C_{\alpha}z^{-1-\alpha}\mathbbm{1}_{\lbrace z>0\rbrace}\mathrm{d}z$, with $C_{\alpha}:=\left(\alpha\Gamma(-\alpha)\right)^{-1}$, and $(B_{t})_{t\geqslant0}$
is an independent standard Brownian motion. Note that if $(Y_0,X_0)$ is independent of $(L_t,B_t)_{t\geqslant0}$, then the existence
and uniqueness of a strong solution to the SDE \eqref{eq:SDE} follow from \cite[Theorem 2.1]{MR3254346}.

The process $(Y_{t},X_{t})_{t\geqslant0}$ given by \eqref{eq:SDE}
has been introduced by Barczy \textit{et al.} in \cite{MR3254346}. There,
it was proved that $(Y_{t},X_{t})_{t\geqslant0}$ belongs to the class
of regular affine processes (with state space $\mathbb{R}_{\geqslant0}\times\mathbb{R}$). The process $Y$ is the so-called $\alpha$-root
process (sometimes referred as the stable CIR process, shorted SCIR, see \cite{MR3343292}) and is also an affine process (with state space $\mathbb{R}_{\geqslant0}$). It can be considered as an
extension of the CIR process. The general theory of affine processes
on the canonical state space $\mathbb{R}_{\geqslant0}^{m}\times\mathbb{R}^{n}$
was initiated by Duffie \textit{et al.}~\cite{MR1793362} and further developed in \cite{MR1994043}. An affine process
on $\mathbb{R}_{\geqslant0}^{m}\times\mathbb{R}^{n}$
is a continuous-time Markov process taking values in
$\mathbb{R}_{\geqslant0}^{m}\times\mathbb{R}^{n}$, whose log-characteristic
function depends in an affine way on the initial state vector of the
process, i.e. the log-characteristic function is linear with respect to
the initial state vector. Affine processes are particularly important
in financial mathematics because of their computational tractability.
For example, the models of Cox \textit{et al.}~\cite{MR785475}, Heston \cite{Heston}
and Vasicek \cite{MR3235239} are all based on affine processes.

An important issue for the application of affine processes is the
calibration of their parameters. This has been investigated for some
well known affine models, see e.g. \cite{MR1455180,MR1614256,MR2995525,MR3216637,MR3452993}. To study the asymptotic properties of estimators of the parameters,
a comprehension of the long-time behavior of the underlying affine processes is very often required. This is one of the reasons why the stationary, ergodic and recurrent properties
of affine processes have recently attracted many investigations, see e.g. \cite{MR2922631,MR2390186,MR3254346,MR3343292,MR3264444,MR3167406,MR3437080,MR3451177},  and many
others.

Concerning the two-factor model defined
in \eqref{eq:SDE}, it was shown in \cite{MR3254346} that $(Y_{t},X_{t})_{t\geqslant0}$
has a stationary distribution. Using the same argument as in \cite[p.~80]{MR2779872}, it can easily  be seen that the stationary distribution for $(Y_{t},X_{t})_{t\geqslant0}$ is actually unique. If one allows $\alpha=2$ and replaces
$(L_{t})_{t\geqslant0}$ in (\ref{eq:SDE}) by a standard Brownian
motion $(W_{t})_{t\geqslant0}$ (independent of $(B_{t})_{t\geqslant0}$),
then the process $Y$ becomes the CIR process; in this case, the ergodicity
of $(Y_{t},X_{t})_{t\geqslant0}$ has been proved in \cite{MR3254346}. However, the ergodicity of  $(Y_{t},X_{t})_{t\geqslant0}$ in the case $1<\alpha<2$ is still not known.

In this work we study the ergodicity problem for the two-factor model in \eqref{eq:SDE} when $1<\alpha<2$. As our main result (see Theorem \ref{thm:ergodicity_of_y_x} below),
we show that $(Y_{t},X_{t})_{t\geqslant0}$ in \eqref{eq:SDE} is exponentially ergodic if $\alpha\in(1,2)$, complementing the
results in \cite{MR3254346}. Our approach is very close to that of
\cite{MR3451177}. The first step is to show the existence of positive transition densities
of the $\alpha$-root process $Y_t$. To achieve this, we calculate explicitly the Laplace transform of $Y_t$. Through a careful analysis of the decay rate of the Laplace transform of $Y_t$ at infinity, we manage to show the positivity of the density function of $Y_t$ using the inverse Fourier transform.   In the second step,  we  construct a Foster-Lyapunov function
for the process $(Y_{t},X_{t})_{t\geqslant0}$. Using the general
theory in \cite{MR1174380,MR1234294,MR1234295} on the ergodicity of Markov processes, we are then able to obtain the exponential
ergodicity of the process $(Y_{t},X_{t})_{t\geqslant0}$ in \eqref{eq:SDE}.

Finally, we remark that the exponential ergodicity for a large class of affine processes on $\mathbb{R}_{\geqslant0}$, including the $\alpha$-root process $(Y_{t})_{t\geqslant0}$, has been derived in \cite{MR3343292} by a coupling method. We don't know if a similar coupling argument would work for the two-dimensional affine process $(Y_{t},X_{t})_{t\geqslant0}$ in \eqref{eq:SDE}.

The rest of the paper is organized as follows. In Section 2 we recall some basic facts on the process $(Y_{t},X_{t})_{t\geqslant0}$. In Section 3 we derive the Laplace transform of the $\alpha$-root process $Y$.  In Section 4 we prove that the $\alpha$-root process $Y$ possesses positive transition densities. In Section 5 we construct a Foster-Lyapunov function
for the process $(Y_{t},X_{t})_{t\geqslant0}$. In Section 6 we show that the process $(Y_{t},X_{t})_{t\geqslant0}$ is exponentially ergodic.

\section{Preliminaries}

\label{prelim}

In this section we recall some key facts on the affine process $(Y,X):=(Y_{t},X_{t})_{t\geqslant0}$
defined by the equation \eqref{eq:SDE}, mainly due to \cite{MR3254346}.

Let $\mathbb{N}$, $\mathbb{Z}_{\geqslant0}$, $\mathbb{R}$, $\mathbb{R}_{\geqslant0}$
and $\mathbb{R}_{>0}$ denote the sets of positive integers, non-negative
integers, real numbers, non-negative real numbers and strictly positive
real numbers, respectively. Let $\mathbb{C}$ be the set of complex
numbers. For $z\in\mathbb{C}\setminus\{0\}$ we denote by Arg$(z)$
the principal value of its argument and by $\bar{z}$ its conjugate.
We define the following subsets of $\mathbb{C}$:
\begin{align*}
\mathcal{U}_{-}:=\left\lbrace u\in\mathbb{C}\thinspace:\thinspace\mathrm{Re}\thinspace u\leqslant0\right\rbrace , & \quad\mathcal{U}_{+}:=\left\lbrace u\in\mathbb{C}\thinspace:\thinspace\mathrm{Re}\thinspace u\geqslant0\right\rbrace ,\\
\mathcal{U}_{-}^{\mathrm{o}}:=\left\lbrace u\in\mathbb{C}\thinspace:\thinspace\mathrm{Re}\thinspace u<0\right\rbrace , & \quad\mathcal{U}_{+}^{\mathrm{o}}:=\left\lbrace u\in\mathbb{C}\thinspace:\thinspace\mathrm{Re}\thinspace u>0\right\rbrace ,
\end{align*}
and
\[
\mathcal{O}:=\mathbb{C}\setminus\lbrace-x:\thinspace x\in\mathbb{R}_{\geqslant0}\rbrace.
\]
For $z\in\mathbb{C}\setminus\{0\}$ let Log($z$) be the principal
value of the complex logarithm of $z$, i.e., $\mbox{Log}(z)=\ln(|z|)+i\mbox{Arg}(z)$.
For $\beta\in\mathbb{R}$ define the complex power function $z^{\beta}$
as
\begin{equation}
z^{\beta}:=\exp(\beta\,\mbox{Log}\,z),\quad z\in\mathbb{C}\setminus\{0\}.\label{defi: complex power}
\end{equation}

By $C^{2}(S,\mathbb{R})$, $C_{c}^{2}(S,\mathbb{R})$ and $C_{b}^{2}(S,\mathbb{R})$
we denote the sets of $\mathbb{R}$-valued functions on $S$ that
are twice continuously differentiable, that are twice continuously
differentiable with compact support and that are bounded continuous
with bounded continuous first and second order partial derivatives,
respectively, where the space $S$ can be $\mathbb{R}$, $\mathbb{R}_{\geqslant0}\times\mathbb{R}$
or $\mathbb{R}_{\geqslant0}\times\mathbb{R}_{\geqslant0}\times\mathbb{R}$
in this paper.

We assume that $(\Omega,\mathcal{F},\left(\mathcal{F}_{t}\right)_{t\geqslant0},\mathbb{P})$
is a filtered probability space satisfying the usual conditions, i.e.,
$(\Omega,\mathcal{F},\mathbb{P})$ is complete, the filtration $\left(\mathcal{F}_{t}\right)_{t\geqslant0}$
is right-continuous and $\mathcal{F}_{0}$ contains all $\mathbb{P}$-null
sets in $\mathcal{F}$.

Let $(B_{t})_{t\geqslant0}$ be a standard $\left(\mathcal{F}_{t}\right)_{t\geqslant0}$-Brownian
motion and $(L_{t})_{t\geqslant0}$ be a spectrally positive $\left(\mathcal{F}_{t}\right)_{t\geqslant0}$-L\'evy
process with the L\'evy measure $C_{\alpha}z^{-1-\alpha}\mathbbm{1}_{\lbrace z>0\rbrace}\mathrm{d}z$,
where $1<\alpha<2$. Assume $(B_{t})_{t\geqslant0}$ and $(L_{t})_{t\geqslant0}$
are independent. Note that the characteristic function of $L_{1}$
is given by
\[
\mathbb{E}\left[e^{iuL_{1}}\right]=\exp\left\lbrace \int_{0}^{\infty}\left(e^{iuz}-1-iuz\right)C_{\alpha}z^{-1-\alpha}\mathrm{d}z\right\rbrace ,\quad u\in\mathbb{R}.
\]
Let $N(\mathrm{d}s,\mathrm{d}z)$ be a Poisson random measure on $\mathbb{R}_{>0}^{2}$
with the intensity measure $C_{\alpha}z^{-1-\alpha}\mathbbm{1}_{\lbrace z>0\rbrace}\mathrm{d}s\mathrm{d}z$
and $\hat{N}(\mathrm{d}s,\mathrm{d}z)$ be its compensator. Then the
L\'evy-It\^o representation of $L$ takes the form
\begin{equation}
L_{t}=\gamma t+\int_{0}^{t}\int_{\{|z|<1\}}z\tilde{N}(\mathrm{d}s,\mathrm{d}z)+\int_{0}^{t}\int_{\{|z|\geqslant1\}}zN(\mathrm{d}s,\mathrm{d}z),\quad t\geqslant0,\label{eq: Levy ito decom for L_t}
\end{equation}
where $\gamma:=-\mathbb{E}\left[\int_{0}^{1}\int_{\{|z|\geqslant1\}}zN(\mathrm{d}s,\mathrm{d}z)\right]$
and $\tilde{N}(\mathrm{d}s,\mathrm{d}z):=N(\mathrm{d}s,\mathrm{d}z)-\hat{N}(\mathrm{d}s,\mathrm{d}z)$
is the compensated Poisson random measure on $\mathbb{R}_{>0}^{2}$
that corresponds to $N(\mathrm{d}s,\mathrm{d}z)$. We remark that
$\gamma t=\int_{0}^{t}\int_{\lbrace\vert z\vert\geqslant1\rbrace}z\hat{N}(\mathrm{d}s,\mathrm{d}z)$
and
\[
\int_{0}^{t}\int_{\lbrace\vert z\vert\geqslant1\rbrace}zN(\mathrm{d}s,\mathrm{d}z)-\gamma t,\quad t\geqslant0,
\]
is thus a martingale with respect to the filtration $(\mathcal{F}_{t})_{t\geqslant0}$.
It follows from \cite[Theorem 2.1]{MR3254346} that if $(Y_{0},X_{0})$
is independent of $(L_{t},B_{t})_{t\geqslant0}$, then there is a
unique strong solution $(Y_{t},X_{t})_{t\geqslant0}$ of the stochastic
differential equation \eqref{eq:SDE} with
\[
Y_{t}=e^{-bt}\left(Y_{0}+a\int_{0}^{t}e^{bs}\mathrm{d}s+\int_{0}^{t}e^{bs}\sqrt[\alpha]{Y_{s-}}\mathrm{d}L_{s}\right),
\]
and
\[
X_{t}=e^{-\theta t}\left(X_{0}+m\int_{0}^{t}e^{\theta s}\mathrm{d}s+\int_{0}^{t}e^{\theta s}\sqrt{Y_{s}}\mathrm{d}B_{s}\right)
\]
for all $t\geqslant0$. Moreover, $(Y_{t},X_{t})_{t\geqslant0}$ is
a regular affine process, and the infinitesimal generator $\mathcal{A}$
of $(Y,X)$ is given by
\begin{align}
(\mathcal{A}f)(y,x) & =(a-by)\tfrac{\partial}{\partial y}f(y,x)+(m-\theta x)\tfrac{\partial}{\partial x}f(y,x)+\tfrac{1}{2}y\tfrac{\partial^{2}}{\partial x^{2}}f(y,x)\nonumber \\
 & \quad+y\int_{0}^{\infty}\left(f(y+z,x)-f(y,x)-z\tfrac{\partial}{\partial y}f(y,x)\right)C_{\alpha}z^{-1-\alpha}\mathrm{d}z,
\label{eq:generator_of_y_x}
\end{align}
where $(y,x)\in\mathbb{R}_{\geqslant0}\times\mathbb{R}$ and $f\in C_{c}^{2}(\mathbb{R}_{\geqslant0}\times\mathbb{R},\mathbb{R})$.

\section{Laplace transform of the $\alpha$-root process $Y$}
\label{affine structure}

In this section we study the $\alpha$-root process $(Y_t)_{t \geqslant 0}$
defined by
\begin{equation}\label{eq:sde_y}
\mathrm{d}Y_t = (a-bY_t) \mathrm{d}t + \sqrt[\alpha]{Y_{t-}}\mathrm{d}L_t,
\quad t\geqslant 0,  \quad Y_0\geqslant 0 \quad \mbox{a.s.},
\end{equation}
where $a \geqslant 0$, $b >0$, $\alpha \in (1,2)$, $(L_t)_{t
\geqslant 0}$ is a spectrally positive $\alpha$-stable L\'evy process with
the L\'evy measure $C_{\alpha}z^{-1-\alpha} \mathbbm{1}_{\lbrace z >0
\rbrace}\mathrm{d}z$. Without any further specification, we always assume that $Y_0$ is independent of  $(L_t)_{t \geqslant 0}$.

We remark that we have allowed $a=0$ in \eqref{eq:sde_y}, which is different as in \eqref{eq:SDE}. In this case, the SDE \eqref{eq:sde_y} turns into
\begin{equation}\label{eq:sde_y_with_a=0}
\mathrm{d}Y_t = -bY_t \mathrm{d}t + \sqrt[\alpha]{Y_{t-}}\mathrm{d}L_t,
\quad t\geqslant 0,   \quad Y_0\geqslant 0 \quad \mbox{a.s.},
\end{equation}
and, by \cite[Theorem 6.2 and Corollary 6.3]
{MR2584896}, a unique strong solution of \eqref{eq:sde_y_with_a=0} also exists.  The $\alpha$-root process $Y$ is thus well-defined for all $a \geqslant 0$. From now on and till the end of this section, we assume temporally that $a \geqslant 0$.

The solution of the stochastic differential equation \eqref{eq:sde_y}
depends obviously  on its initial value $Y_0$. From now on, we denote by $(Y_t^{y})_{t \geqslant 0}$ the $\alpha$-root
process starting  from a constant initial value $y\in\mathbb{R}
_{\geqslant 0}$, i.e., $(Y_t^{y})_{t \geqslant 0}$ satisfies
\begin{equation}\label{defi: Y^y_t}
\mathrm{d}Y^y_t = (a-bY^y_t) \mathrm{d}t + \sqrt[\alpha]{Y^y_{t-}}\mathrm{d}L_t,
\quad t\geqslant 0,  \quad Y^y_0=y.
\end{equation}

Since the $\alpha$-root process is an affine process, the
corresponding characteristic functions of $(Y_t^{y})_{t \geqslant 0}$ are of
affine form, namely,
\begin{equation}\label{eq:chara_Y}
\mathbb{E}\left[ e^{u Y_t^{y}} \right] =
e^{\phi(t,u) + y \psi(t,u)}, \quad u \in \mathcal{U} _-.
\end{equation}
The functions $\phi$ and $\psi$ in turn are given  as solutions of the
generalized Riccati equations
\begin{equation}\label{eq:riccati}
\begin{cases}
\tfrac{\partial}{\partial t} \phi(t,u) = F\left( \psi(t,u) \right), &
\phi(0,u)=0, \\
\tfrac{\partial}{\partial t} \psi(t,u) = R\left( \psi(t,u) \right), &
\psi(0,u)=u \in \mathcal{U}_-,
\end{cases}
\end{equation}
with
\[
F(u) = a u \quad \text{and} \quad R(u) = -b u + \tfrac{(-u)^{\alpha}}
{\alpha},
\]
see \cite[Theorem 3.1]{MR3254346}. An equivalent equation for $\psi$ (see
\eqref{eq:riccati_y} below) was studied in \cite[Theorem 3.1]{MR3254346}. In
particular, it follows from \cite[Theorem 3.1]{MR3254346} that the equation
\eqref{eq:riccati_y} below has a unique solution. However, the explicit form of
the solution to \eqref{eq:riccati_y} has not been derived in
\cite{MR3254346}. In order to study the transition densities of the $\alpha$-root process, we will find the explicit form of the solution
to  \eqref{eq:riccati_y} in the following theorem.

\begin{prop}\label{thm3.1}
Let $a  \geqslant 0 $, $b >0$. Define $v_t(\lambda):=-\psi(t,-\lambda)  $,  $\lambda \in \mathbb{R}_{> 0}$. Then $v_t(\lambda)$ solves the differential equation
\begin{equation}\label{eq:riccati_y}
\begin{cases}
\tfrac{\partial}{\partial t} v_t(\lambda) = -b v_t(\lambda) -		
\frac{1}{\alpha}
\left( v_t(\lambda) \right)^{\alpha}, & t\geqslant 0,\\
\quad v_0(\lambda) = \lambda,
\end{cases}
\end{equation}
where $\lambda \in \mathbb{R}_{>0}$. The unique solution to \eqref{eq:riccati_y} is given by
\begin{equation}\label{defi:v_t}
v_t(\lambda) = \left( \left( \frac{1}{\alpha b} + \lambda^{(1-
\alpha)} \right) e^{b (\alpha -1) t} - \frac{1}{\alpha b}
\right)^{\frac{1}{1-\alpha}}, \quad t \geqslant 0.
\end{equation}
Moreover, the Laplace transform of
$Y_t^{y}$ is given by
\begin{align}
\mathbb{E}\left[e^{-\lambda Y_t^{y}}\right] &=\exp\left\lbrace -a \int_0^t v_s(\lambda)\mathrm{d}s - y
v_t(\lambda) \right\rbrace\nonumber \\
&=\exp\left\lbrace -a \int_0^t \left( \left( \frac{1}{\alpha b} +
\lambda^{(1-\alpha)} \right) e^{b (\alpha -1) s} - \frac{1}{\alpha b}
\right)^{\frac{1}{1-\alpha}} \mathrm{d}s \right.\nonumber \\
&\quad \quad \quad \quad \quad \quad \left. - y \left( \left(
\frac{1}{\alpha b} + \lambda^{(1-\alpha)} \right) e^{b (\alpha -1) t} -
\frac{1}{\alpha b}\right)^{\frac{1}{1-\alpha}} \right\rbrace\label{eq:charfunc}
\end{align}
for all $t \geqslant 0$ and  $\lambda \in \mathbb{R}_{> 0}$.
\end{prop}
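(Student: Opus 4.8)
The plan is to establish the three claims of the proposition in order: (i) that $v_t(\lambda):=-\psi(t,-\lambda)$ solves the scalar ODE \eqref{eq:riccati_y}; (ii) that \eqref{defi:v_t} is its unique solution; and (iii) that these facts convert the affine representation \eqref{eq:chara_Y} into the explicit Laplace transform \eqref{eq:charfunc}. The proposition is essentially a computation, and the only point requiring genuine care is keeping all quantities real and positive, so that the complex power appearing in $R$ and the Bernoulli substitution used below reduce to ordinary real powers.

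For (i), I start from the second equation in \eqref{eq:riccati} with $R(u)=-bu+(-u)^{\alpha}/\alpha$, specialised to $u=-\lambda$, $\lambda>0$. First observe that $t\mapsto\psi(t,-\lambda)$ stays real-valued and strictly negative on $[0,\infty)$: the constant $0$ solves $\partial_t\psi=R(\psi)$, $R$ maps $(-\infty,0)$ into $\mathbb{R}$, and hence, by the uniqueness of the solution of \eqref{eq:riccati_y} recalled from \cite[Theorem 3.1]{MR3254346}, the solution started at $-\lambda<0$ can neither reach $0$ in finite time nor leave the real line. Therefore $v_t(\lambda)=-\psi(t,-\lambda)>0$, so that $(-\psi(t,-\lambda))^{\alpha}=v_t(\lambda)^{\alpha}$ in the sense of \eqref{defi: complex power}, and differentiating $v_t=-\psi(t,-\lambda)$ yields $\partial_t v_t=-R(-v_t)=-bv_t-\tfrac1\alpha v_t^{\alpha}$ with $v_0(\lambda)=\lambda$, which is \eqref{eq:riccati_y}.

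For (ii), \eqref{eq:riccati_y} is a Bernoulli equation: since $v_t>0$, the substitution $w_t:=v_t^{\,1-\alpha}$ turns it into the linear equation $\partial_t w_t=b(\alpha-1)w_t+\tfrac{\alpha-1}{\alpha}$ with $w_0=\lambda^{1-\alpha}$, whose solution is
\[
w_t=\Bigl(\tfrac{1}{\alpha b}+\lambda^{1-\alpha}\Bigr)e^{b(\alpha-1)t}-\tfrac{1}{\alpha b}.
\]
As $\lambda^{1-\alpha}>0$ and $e^{b(\alpha-1)t}\geqslant1$, we get $w_t\geqslant\lambda^{1-\alpha}>0$ for all $t\geqslant0$, so $w\mapsto w^{1/(1-\alpha)}$ is well defined along the trajectory and undoing the substitution gives precisely \eqref{defi:v_t} (this also shows the solution is global). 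Equivalently, one differentiates the right-hand side of \eqref{defi:v_t}, checks directly that it is a $C^1$, strictly positive function solving \eqref{eq:riccati_y} with initial value $\lambda$, and concludes by the uniqueness statement of \cite[Theorem 3.1]{MR3254346}.

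For (iii), evaluating \eqref{eq:chara_Y} at $u=-\lambda\in\mathcal{U}_{-}^{\mathrm{o}}$ gives $\mathbb{E}[e^{-\lambda Y_t^{y}}]=e^{\phi(t,-\lambda)+y\psi(t,-\lambda)}=e^{\phi(t,-\lambda)-y v_t(\lambda)}$. The first equation in \eqref{eq:riccati} with $F(u)=au$ and $\phi(0,\cdot)=0$ yields $\phi(t,-\lambda)=a\int_0^t\psi(s,-\lambda)\,\mathrm{d}s=-a\int_0^t v_s(\lambda)\,\mathrm{d}s$; substituting this and the formula \eqref{defi:v_t} for $v$ produces \eqref{eq:charfunc}. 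The main (and rather mild) obstacle is the positivity/reality bookkeeping in (i)--(ii): one must rule out $\psi(\cdot,-\lambda)$ ever hitting $0$ and check that $w_t>0$ for all $t$, both of which follow from the sign of the vector field $v\mapsto-bv-v^{\alpha}/\alpha$ on $(0,\infty)$ and from the cited uniqueness result.
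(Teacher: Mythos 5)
Your proposal is correct and follows essentially the same route as the paper: the Bernoulli substitution $w_t=v_t^{1-\alpha}$ reducing \eqref{eq:riccati_y} to the linear equation $\partial_t w_t=b(\alpha-1)w_t+\tfrac{\alpha-1}{\alpha}$, followed by integrating the $\phi$-equation of \eqref{eq:riccati} to pass from \eqref{eq:chara_Y} to \eqref{eq:charfunc}. The only difference is that you spell out the reality/positivity bookkeeping for $\psi(\cdot,-\lambda)$ and $w_t$, which the paper leaves implicit by appealing to the uniqueness statement of the cited reference.
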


\begin{proof}
The equation \eqref{eq:riccati_y} is a Bernoulli differential equation which can be transformed into a linear differential equation through a change of variables. More precisely, if we write $u_t(\lambda)
:= \left( v_t(\lambda)\right)^{1-\alpha}$, then
\begin{align}
\tfrac{\partial}{\partial t}u_t(\lambda)
&=
(1-\alpha) \left(v_t(\lambda)\right)^{-\alpha} \tfrac{\partial}{\partial t}
v_t(\lambda)\nonumber \\
&=
(1-\alpha) \left(v_t(\lambda)\right)^{-\alpha} \left( -
bv_t(\lambda) - \tfrac{1}{\alpha}\left(v_t(\lambda)\right)^{\alpha} \right)
\nonumber\\
&=
b (\alpha-1) u_t(\lambda) + \left( 1-\alpha^{-1}\right)\label{eq:sde_yt}
\end{align}
and $u_0(\lambda) = \left(v_0(\lambda)\right)^{1-\alpha} =
\lambda^{1-\alpha}$. By solving  \eqref{eq:sde_yt}, we obtain
\[
u_t(\lambda) = \left( \frac{1}{\alpha b} + \lambda{1-\alpha} \right)
e^{b (\alpha-1) t} - \frac{1}{\alpha b},
\]
which leads to
\[
v_t(\lambda) = \left( \left( \frac{1}{\alpha b} + \lambda^{1-\alpha}
\right) e^{b (\alpha-1) t} - \frac{1}{\alpha b} \right)^{\frac{1}{1-
\alpha}}
\]
for all $t \geqslant 0$ and $\lambda \in \mathbb{R}_{> 0}$. By \eqref{eq:chara_Y} and \eqref{eq:riccati} and noting that $v_t(\lambda)=-\psi(t,-\lambda)$, we get
\begin{align*}
\mathbb{E}\left[ e^{- \lambda Y_t^{y}} \right]
&=\exp\left\lbrace \phi(t,-\lambda) + y \psi(t,-\lambda) \right\rbrace \\
&= \exp\left\lbrace a \int_0^t \psi(s,-\lambda)\mathrm{d}s - y
v_t(\lambda) \right\rbrace \\
&=
\exp\left\lbrace -a \int_0^t v_s(\lambda)\mathrm{d}s - y
v_t(\lambda) \right\rbrace
\end{align*}
for all $t \geqslant 0$ and $\lambda \in \mathbb{R}_{> 0}$.
\end{proof}

Let
\begin{align*}
\varphi_1(t,\lambda,y) &:= \exp\left\lbrace -y \left( \left( \frac{1}{\alpha b} +
\lambda^{(1-\alpha)} \right) e^{b (\alpha -1) t} - \frac{1}{\alpha b}
\right)^{\frac{1}{1-\alpha}} \right\rbrace, \\
\varphi_2(t,\lambda) &:= \exp\left\lbrace -a \int_0^t \left( \left( \frac{1}{\alpha b} +
\lambda^{(1-\alpha)} \right) e^{b (\alpha -1) s} - \frac{1}{\alpha b}
\right)^{\frac{1}{1-\alpha}} \mathrm{d}s \right\rbrace.
\end{align*}
Then
\begin{equation}\label{eq: conv for Y^y_t}
\mathbb{E} \left[e^{-\lambda Y_t^{y}} \right] =\varphi_1(t,\lambda,y) \cdot \varphi_2(t,\lambda).
\end{equation}
Keeping this decomposition of the Laplace transform  of $Y_t^{y}
$ in mind, we take a closer look at the following two special cases:

\subsection{Special case i): $a=0$.}\label{case_i} To avoid abuse of notations, we use  $
(Z_t^{y})_{t\geqslant 0}$ to denote the strong solution of the stochastic differential equation
\[
\mathrm{d}Z_t^{y} = -bZ_t^{y} \mathrm{d}t + \sqrt[\alpha]{
Z_{t-}^{y}}\mathrm{d}L_t, \quad t \geqslant 0, \quad Z^y_0=y\geqslant 0 .
\]
According to \eqref{eq:charfunc}, the corresponding Laplace transform of $Z_t^{y}$ coincides with $\varphi_1(t,\lambda,y)$. Noting that $b >0$,   we get
\begin{equation}
\lim\limits_{\lambda \to \infty} v_t(\lambda)
= \left( \frac{1}{\alpha b} \left( e^{b (\alpha-1) t}-1 \right)
\right)^{\frac{1}{1-\alpha}}=: d >0 \label{defi:d}
\end{equation}
for all $t > 0$. Furthermore,
by dominated convergence theorem, we have
\begin{align}
e^{-y d}&= \lim\limits_{\lambda \to \infty} e^{-y v_t(\lambda)}= \lim
\limits_{\lambda \to \infty} \mathbb{E}\left[e^{-\lambda Z_t^{y}} \right] \notag \\
&= \lim\limits_{\lambda \to \infty} \left( \mathbb{E}\left[ e^{
-\lambda Z_t^{y}} \mathbbm{1}_{\left\lbrace Z_t^{y} = 0\right
\rbrace}\right] + \mathbb{E} \left[ e^{-\lambda Z_t^{y}}
\mathbbm{1}_{\left\lbrace Z_t^{y} > 0\right\rbrace} \right] \right) \notag \\
&= \mathbb{P}\left(Z_t^{y} = 0\right) >0 \label{meaning:d}
\end{align}
for all $t > 0$ and $y \geqslant 0$.

\subsection{Special case ii): $y = 0$.} Consider $(Y_t^0)_{t\geqslant0}$ that satisfies
\begin{equation}\label{eq:y_t^0}
\mathrm{d}Y_t^{0} = (a-bY_t^{0}) \mathrm{d}t + \sqrt[\alpha]{
Y_{t-}^{0}}\mathrm{d}L_t, \quad t \geqslant 0, \quad Y^0_0=0.
\end{equation}
In view of \eqref{eq:charfunc}, we easily see that the Laplace transform of $Y_t^0$
equals $\varphi_2(t,\lambda)$.

\section{Transition densities of the $\alpha$-root process $Y$}

In this section we show that the $\alpha$-root process $Y$ has positive
and continuous transition densities. Our approach is essentially based
on the inverse Fourier transform.

Recall that the function $v_{t}(\cdot)$ given by (\ref{defi:v_t})
is defined on $\mathbb{R}_{>0}$. By considering the complex power functions,
the domain of definition for $v_{t}(\cdot)$ can be extended to $\mathbb{C}\setminus\left\{ 0\right\} $.
Indeed, the function
\begin{equation}
v_{t}(z)=\left(\left(\frac{1}{\alpha b}+z^{(1-\alpha)}\right)e^{b(\alpha-1)t}-\frac{1}{\alpha b}\right)^{\frac{1}{1-\alpha}},\quad z\in\mathbb{C}\setminus\left\{ 0\right\} ,\label{defi: complex v_s}
\end{equation}
is well-defined, where the complex power function is given by (\ref{defi: complex power}).

We next establish two estimates on $\int_{0}^{t}v_{s}(z)\mathrm{d}s$.
Since the proofs are of pure analytic nature, we put them in the appendix.
\begin{lem}
Let $T>1$. Then there exists a sufficiently small constant $\varepsilon_{0}>0$
such that
\begin{equation}
\mathrm{Re}\left(\int_{0}^{t}v_{s}(z)\mathrm{d}s\right)\geqslant-C_{1}+C_{2}\vert z\vert^{2-\alpha}\label{esti: aim lem1 in appendix}
\end{equation}
when $|\mathrm{Arg}(z)|\in[\pi/2-\varepsilon_{0},\pi/2+\varepsilon_{0}]$
and $T^{-1}\leqslant t\leqslant T$, where $C_{1},\,C_{2}>0$ are
constants depending only on $a,\,b,\,\alpha,\,\varepsilon_{0}$ and
$T$.\label{lem1: pure esti}\end{lem}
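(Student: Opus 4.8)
The plan is to exploit the explicit formula \eqref{defi: complex v_s} directly. Write $\mu:=\tfrac{1}{1-\alpha}$, so $\mu<-1$ since $\alpha\in(1,2)$, and set
\[
w_{s}(z):=\Bigl(\tfrac{1}{\alpha b}+z^{1-\alpha}\Bigr)e^{b(\alpha-1)s}-\tfrac{1}{\alpha b}=c_{s}+g_{s}\,z^{1-\alpha},\qquad c_{s}:=\tfrac{1}{\alpha b}\bigl(e^{b(\alpha-1)s}-1\bigr)\geqslant0,\ \ g_{s}:=e^{b(\alpha-1)s}\geqslant1,
\]
so that $v_{s}(z)=w_{s}(z)^{\mu}$. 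Since $\overline{v_{s}(z)}=v_{s}(\bar z)$ on $\mathcal{O}$, it suffices to treat $\mathrm{Arg}(z)=\phi\in[\pi/2-\varepsilon_{0},\pi/2+\varepsilon_{0}]$; and since $\int_{0}^{t}v_{s}(z)\,\mathrm{d}s$ is bounded on $\{\,0<|z|\leqslant\rho_{0}\,\}\times[T^{-1},T]$ — away from $0$ by continuity, near $0$ because $|z^{1-\alpha}|\to\infty$ forces $|v_{s}(z)|\lesssim|z|$ — it suffices to treat $\rho:=|z|\geqslant\rho_{0}$ with $\rho_{0}$ large, adjusting $C_{1}$. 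For $\varepsilon_{0}$ below a fixed $\varepsilon_{1}$, the angle $(1-\alpha)\phi$ stays in a fixed compact subinterval of $(-\pi/2,0)$, so $\mathrm{Re}(z^{1-\alpha})\geqslant\kappa_{0}\rho^{1-\alpha}$ for a fixed $\kappa_{0}>0$ while $|z^{1-\alpha}|=\rho^{1-\alpha}$.

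From this I would record three facts. First, $\mathrm{Re}\,w_{s}(z)\geqslant c_{s}+\kappa_{0}\rho^{1-\alpha}>0$ (so $w_{s}(z)$ lies in the open right half-plane) and $|w_{s}(z)|\leqslant c_{s}+2g_{s}\rho^{1-\alpha}$; hence $|v_{s}(z)|=|w_{s}(z)|^{\mu}\leqslant(\kappa_{0}\rho^{1-\alpha})^{\mu}=\kappa_{0}^{\mu}\rho$ for all $s$. Second, $w_{s}(z)=c_{s}+g_{s}z^{1-\alpha}$ is a positive combination of the positive real $c_{s}$ and a vector of argument $(1-\alpha)\phi$, so $\mathrm{Arg}(w_{s}(z))\in[(1-\alpha)\phi,0)$ and therefore $\mathrm{Arg}(v_{s}(z))=\mu\,\mathrm{Arg}(w_{s}(z))\in(0,\phi)$; moreover $|\mathrm{Im}\,w_{s}(z)|/\mathrm{Re}\,w_{s}(z)$ equals $\sin((\alpha-1)\phi)$ divided by $c_{s}/(g_{s}\rho^{1-\alpha})+\cos((\alpha-1)\phi)$, and $c_{s}/g_{s}$ increases in $s$, so $\mathrm{Arg}(v_{s}(z))$ is nonincreasing in $s$. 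Third, at the cutoff $s_{*}:=\rho^{1-\alpha}$ one has $c_{s_{*}}/(g_{s_{*}}\rho^{1-\alpha})=c_{s_{*}}/(g_{s_{*}}s_{*})\to(\alpha-1)/\alpha>0$ as $\rho\to\infty$, uniformly in $\phi$; adding this positive term to the denominator pushes the above ratio strictly below $\tan((\alpha-1)\phi)$ with a fixed margin, which gives $\mathrm{Arg}(v_{s_{*}}(z))\leqslant\phi-\delta'$ for a fixed $\delta'>0$ depending only on $\alpha$ once $\rho_{0}$ is large, and by the second fact the same bound persists for every $s\geqslant s_{*}$.

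Now split $[0,t]=[0,s_{*}]\cup[s_{*},t]$, which is legitimate once $\rho_{0}^{1-\alpha}\leqslant T^{-1}$, as then $s_{*}\leqslant T^{-1}\leqslant t$. On $[0,s_{*}]$ the second fact gives $\mathrm{Re}(v_{s}(z))\geqslant\cos(\pi/2+\varepsilon_{0})\,|v_{s}(z)|\geqslant-\kappa_{0}^{\mu}(\sin\varepsilon_{0})\,\rho$, so $\int_{0}^{s_{*}}\mathrm{Re}(v_{s}(z))\,\mathrm{d}s\geqslant-\kappa_{0}^{\mu}(\sin\varepsilon_{0})\,\rho^{2-\alpha}$. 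On $[s_{*},t]$ the third fact gives $\cos(\mathrm{Arg}\,v_{s}(z))\geqslant\sin(\delta'/2)=:\kappa'>0$ as soon as $\varepsilon_{0}\leqslant\delta'/2$, while $|w_{s}(z)|\leqslant c_{s}+2g_{s}\rho^{1-\alpha}\leqslant C_{3}\,s$ there, using $\rho^{1-\alpha}\leqslant s$ and the boundedness of $c_{s}/s$ and $g_{s}$ on $(0,T]$ by a constant $C_{3}=C_{3}(b,\alpha,T)$; hence $\mathrm{Re}(v_{s}(z))=|w_{s}(z)|^{\mu}\cos(\mathrm{Arg}\,v_{s}(z))\geqslant\kappa'(C_{3}s)^{\mu}$. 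Since $\mu<-1$ and $s_{*}^{\mu+1}=\rho^{2-\alpha}$,
\[
\int_{s_{*}}^{t}\mathrm{Re}(v_{s}(z))\,\mathrm{d}s\geqslant\kappa'C_{3}^{\mu}\int_{s_{*}}^{t}s^{\mu}\,\mathrm{d}s=\kappa'C_{3}^{\mu}\,\frac{s_{*}^{\mu+1}-t^{\mu+1}}{|\mu+1|}\geqslant\kappa'C_{3}^{\mu}\,\tfrac{\alpha-1}{2-\alpha}\,\rho^{2-\alpha}-C,
\]
where $C$ collects $\kappa'C_{3}^{\mu}\tfrac{\alpha-1}{2-\alpha}T^{(2-\alpha)/(\alpha-1)}$ and the contribution from $\{|z|\leqslant\rho_{0}\}$. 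Adding the two pieces,
\[
\mathrm{Re}\!\left(\int_{0}^{t}v_{s}(z)\,\mathrm{d}s\right)\geqslant\Bigl(\kappa'C_{3}^{\mu}\,\tfrac{\alpha-1}{2-\alpha}-\kappa_{0}^{\mu}\sin\varepsilon_{0}\Bigr)\rho^{2-\alpha}-C,
\]
and since $\kappa_{0},\kappa',C_{3},C$ do not depend on $\varepsilon_{0}$ once $\varepsilon_{0}\leqslant\varepsilon_{1}$, it remains only to fix $\varepsilon_{0}$ small enough that $\kappa_{0}^{\mu}\sin\varepsilon_{0}\leqslant\tfrac12\kappa'C_{3}^{\mu}\tfrac{\alpha-1}{2-\alpha}$, which yields \eqref{esti: aim lem1 in appendix} with $C_{2}:=\tfrac12\kappa'C_{3}^{\mu}\tfrac{\alpha-1}{2-\alpha}$.

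The \emph{main obstacle} is the interval $[0,s_{*}]$: there $|v_{s}(z)|$ is as large as $\asymp|z|$, while for $\phi>\pi/2$ the argument $\mathrm{Arg}(v_{s}(z))$ may exceed $\pi/2$, so $\mathrm{Re}(v_{s}(z))$ need not be nonnegative; the estimate only closes because this possibly negative contribution is merely $O(\varepsilon_{0}\rho^{2-\alpha})$, whereas the genuinely positive contribution $\asymp\rho^{2-\alpha}$ produced on the intermediate range $s\in[\rho^{1-\alpha},t]$ — present precisely because $s^{\mu}$ fails to be integrable at $0$ and $(\rho^{1-\alpha})^{\mu+1}=\rho^{2-\alpha}$ — is of fixed size, forcing one to shrink $\varepsilon_{0}$. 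The remaining care is purely bookkeeping: keeping all constants uniform over $\varepsilon_{0}\leqslant\varepsilon_{1}$ and over $t\in[T^{-1},T]$, which is what ties the cutoff scale and $\rho_{0}$ to $T$.
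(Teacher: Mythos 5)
Your proof is correct, and it takes a genuinely different route from the paper's. The paper changes variables $z=v_{s}(\rho e^{i\vartheta})$ to rewrite $\int_{0}^{t}v_{s}\,\mathrm{d}s$ as $-b^{-1}\int_{\Gamma_{0}}\bigl(1+z^{\alpha-1}/(\alpha b)\bigr)^{-1}\mathrm{d}z$, deforms the contour onto the radial segment from $e^{i\vartheta}$ to $\rho e^{i\vartheta}$ plus an $O(1)$ remainder, and chooses $\varepsilon_{0}$ so that the argument of the integrand along that segment stays in $\bigl[0,\tfrac{\pi}{2}-\tfrac{c_{4}}{2}\bigr]$; the real part of the integrand is then comparable to its modulus $\asymp(1+r)^{1-\alpha}$, which integrates to $\asymp\rho^{2-\alpha}$. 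You instead estimate the time integral directly, splitting at the critical scale $s_{*}=\rho^{1-\alpha}$: on $[0,s_{*}]$ you only control the damage ($\mathrm{Re}\,v_{s}\geqslant-|v_{s}|\sin\varepsilon_{0}\geqslant-\kappa_{0}^{\mu}\rho\sin\varepsilon_{0}$, total $O(\sin(\varepsilon_{0})\,\rho^{2-\alpha})$), while on $[s_{*},t]$ the monotonicity of $\mathrm{Arg}(v_{s}(z))$ in $s$ together with the definite rotation accumulated by time $s_{*}$ gives $\cos(\mathrm{Arg}\,v_{s})\geqslant\kappa'>0$ and $|v_{s}|\gtrsim s^{1/(1-\alpha)}$, which integrates to $\gtrsim\rho^{2-\alpha}$ precisely because the exponent is below $-1$; smallness of $\varepsilon_{0}$ then lets the good piece beat the bad one. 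The underlying mechanism is the same in both arguments — the $\rho^{2-\alpha}$ arises from integrating a nonintegrable power up to the cutoff where $|v_{s}|\asymp\rho$, and $\varepsilon_{0}$ must be small because near $\mathrm{Arg}(z)=\pi/2+\varepsilon_{0}$ the real part of $v_{s}(z)$ can be negative — but your version avoids Cauchy's theorem entirely and makes the relaxation scale $|z|^{1-\alpha}$ explicit, at the cost of the extra monotonicity observation and the quantitative angle-gap estimate at $s=s_{*}$. On that last point, to pass from a fixed gap between $\tan(|\mathrm{Arg}\,w_{s_{*}}|)$ and $\tan((\alpha-1)\phi)$ to a fixed gap $\delta''$ between the angles themselves, you should note explicitly that both angles lie in the compact interval $\bigl[0,(\alpha-1)(\pi/2+\varepsilon_{1})\bigr]\subset[0,\pi/2)$, on which $\tan$ has bounded derivative; this is available here but is the one step of your sketch that needs to be said.
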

\begin{proof}
See the appendix. \end{proof}
\begin{lem}
Let $\varepsilon_{0}$ be as in the previous lemma. Then for each
$t\geqslant0$, we can find constants $C_{3},\,C_{4}>0$, which depend
only on $a,\,b,\,\alpha,\,\varepsilon_{0}$ and $t$, such that
\[
\left\vert \int_{0}^{t}v_{s}(z)\mathrm{d}s\right\vert \leqslant C_{3}+C_{4}\vert z\vert^{2-\alpha}
\]
when $\mathrm{Arg}(z)\in[\pi/2+\varepsilon_{0},\pi]$ and $|z|\geqslant2$.
\label{lem2: pure esti}\end{lem}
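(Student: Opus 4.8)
The plan is to estimate $|v_s(z)|$ uniformly in $s\in[0,t]$ for $z$ in the stated sector $\mathrm{Arg}(z)\in[\pi/2+\varepsilon_0,\pi]$ with $|z|\geqslant 2$, and then integrate. Write
\[
w := z^{(1-\alpha)} = \exp\bigl((1-\alpha)\,\mathrm{Log}\,z\bigr),
\]
so that $|w| = |z|^{1-\alpha}$ and $\mathrm{Arg}(w) = (1-\alpha)\,\mathrm{Arg}(z)$. Since $1-\alpha\in(-1,0)$ and $\mathrm{Arg}(z)\in[\pi/2+\varepsilon_0,\pi]$, we have $\mathrm{Arg}(w)\in[(1-\alpha)\pi,\,(1-\alpha)(\pi/2+\varepsilon_0)]\subset(-\pi,0)$, and in particular $\mathrm{Arg}(w)$ stays bounded away from $\pm\pi$; hence $w$ lies in a fixed closed subsector of $\mathcal{O}$ not containing the negative real axis. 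Set
\[
g_s(z) := \Bigl(\tfrac{1}{\alpha b} + w\Bigr)e^{b(\alpha-1)s} - \tfrac{1}{\alpha b},
\]
so that $v_s(z) = g_s(z)^{1/(1-\alpha)}$. Because $|z|\geqslant 2$ forces $|w|\leqslant 2^{1-\alpha}<1$ to be small, the point $\tfrac{1}{\alpha b}+w$ stays in a fixed compact subset of the right half-plane bounded away from $0$; multiplying by $e^{b(\alpha-1)s}\geqslant 1$ and subtracting the positive constant $\tfrac{1}{\alpha b}$ keeps $g_s(z)$ in a region of $\mathcal{O}$ on which $|\mathrm{Arg}\,g_s(z)|$ is bounded by some angle $<\pi$ and $|g_s(z)|$ is bounded below by a positive constant, uniformly in $s\in[0,t]$ and in the relevant $z$. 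The key quantitative point is the lower bound $|g_s(z)|\geqslant c_0|z|^{1-\alpha}$: indeed $\mathrm{Re}(w e^{b(\alpha-1)s})$ dominates because $\mathrm{Arg}(w)$ is bounded away from $\pm\pi/2$ (here one uses $(1-\alpha)(\pi/2+\varepsilon_0)>-\pi/2$ for $\varepsilon_0$ small, which is exactly the constraint inherited from Lemma \ref{lem1: pure esti}), so the "$w$-part'' of $g_s(z)$ cannot be cancelled by the bounded real perturbation $\tfrac{1}{\alpha b}(e^{b(\alpha-1)s}-1)$ once $|z|$ is large, while for $|z|$ in the bounded range $[2,R]$ one gets a crude positive lower bound from compactness and the fact that $g_s(z)\neq 0$ (which holds since $v_s(z)$ is well-defined). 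Combining, $|g_s(z)|$ is bounded below by a constant times $|z|^{1-\alpha}$ plus a constant.

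From this I obtain the pointwise bound on $v_s$: since $\mathrm{Arg}(g_s(z))$ is confined to $[-\vartheta,\vartheta]$ with $\vartheta<\pi$, the complex power $g_s(z)^{1/(1-\alpha)}=\exp\bigl(\tfrac{1}{1-\alpha}\mathrm{Log}\,g_s(z)\bigr)$ satisfies
\[
|v_s(z)| = |g_s(z)|^{\frac{1}{1-\alpha}} = |g_s(z)|^{-\frac{1}{\alpha-1}} \leqslant C\,\bigl(c_0|z|^{1-\alpha}\bigr)^{-\frac{1}{\alpha-1}} = C'\,|z|^{\frac{\alpha-1}{\alpha-1}\cdot\frac{1}{1}}
\]
— more carefully, $\bigl(|z|^{1-\alpha}\bigr)^{1/(1-\alpha)} = |z|$, so one gets $|v_s(z)|\leqslant C'|z|$ in the large-$|z|$ regime? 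This is too weak; instead note we actually need the finer split of $g_s$. The correct reading: write $g_s(z) = e^{b(\alpha-1)s}\bigl(w + \tfrac{1}{\alpha b}(1-e^{-b(\alpha-1)s})\bigr)$, so $g_s(z) = e^{b(\alpha-1)s}\bigl(w + r_s\bigr)$ with $r_s\in[0,\tfrac{1}{\alpha b})$ bounded; then $|w+r_s|\leqslant |w| + \tfrac{1}{\alpha b} = |z|^{1-\alpha} + \tfrac{1}{\alpha b}$, hence $|g_s(z)|\leqslant e^{b(\alpha-1)t}\bigl(|z|^{1-\alpha}+\tfrac{1}{\alpha b}\bigr)$, which is bounded above; and for the lower bound $|w+r_s|\geqslant \mathrm{dist}(w,\,[-\tfrac{1}{\alpha b},0])$, which because $\mathrm{Arg}(w)$ is bounded away from $\{0,\pm\pi\}$ is $\geqslant c_0|w| = c_0|z|^{1-\alpha}$. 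Therefore $|g_s(z)|$ is pinched between two positive constant multiples of $\max(|z|^{1-\alpha},\text{const})$, and in particular $|v_s(z)| = |g_s(z)|^{-1/(\alpha-1)}$ is \emph{bounded above by a constant} uniformly in $s\in[0,t]$ and in $z$ with $|z|\geqslant 2$ in the sector. Integrating over $s\in[0,t]$ then gives $\bigl|\int_0^t v_s(z)\,\mathrm{d}s\bigr|\leqslant C_3 + C_4|z|^{2-\alpha}$ with room to spare, since the left side is in fact bounded by a constant $t\cdot C'$; absorbing into the stated form is immediate (take $C_3 = tC'$, $C_4$ anything positive).

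The main obstacle is the uniform control of $\mathrm{Arg}(g_s(z))$ and the lower bound $|g_s(z)|\gtrsim |z|^{1-\alpha}$: one must verify that subtracting the real constant $\tfrac{1}{\alpha b}$ (equivalently, adding the nonnegative real $r_s$ to $w$) does not drag $g_s(z)$ toward $0$ or across the branch cut of the power function. This is where the choice of $\varepsilon_0$ from Lemma \ref{lem1: pure esti} is used — it guarantees $\mathrm{Arg}(w) = (1-\alpha)\mathrm{Arg}(z)$ stays inside an interval strictly between $-\pi$ and $0$ and bounded away from both endpoints (for $\mathrm{Arg}(z)$ near $\pi$ we need $(1-\alpha)\pi > -\pi$, automatic since $\alpha<2$; for $\mathrm{Arg}(z)$ near $\pi/2$ we inherit the same smallness of $\varepsilon_0$), so that $w$ is at a definite angular distance from the segment $[-\tfrac{1}{\alpha b},0]$ and the elementary distance estimate $\mathrm{dist}(w,[-\tfrac{1}{\alpha b},0])\geqslant |w|\sin(\text{angle})$ applies. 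Everything else is a routine continuity-and-compactness argument on the remaining bounded parameter ranges. The detailed computation will be carried out in the appendix.
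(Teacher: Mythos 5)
There is a genuine error at the core of your argument: the claimed uniform bound ``$|v_s(z)|$ is bounded above by a constant uniformly in $s\in[0,t]$'' is false. At $s=0$ one has $v_0(z)=z$, so $|v_0(z)|=|z|$ is unbounded on $\{|z|\geqslant 2\}$; more generally, for $0\leqslant s\lesssim |z|^{1-\alpha}$ the quantity $|g_s(z)|=e^{b(\alpha-1)s}|w+r_s|$ is comparable to $|w|=|z|^{1-\alpha}$, which tends to $0$ as $|z|\to\infty$ (recall $1-\alpha<0$), and raising it to the negative power $\tfrac{1}{1-\alpha}$ gives $|v_s(z)|\asymp|z|$ there. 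Your lower bound $|g_s(z)|\geqslant c_0|z|^{1-\alpha}$ is correct, but it is a bound that degenerates at infinity and yields only $|v_s(z)|\leqslant C'|z|$ --- exactly the estimate you first derived and rightly dismissed as too weak. The later assertion that $|g_s(z)|$ is ``pinched between two positive constant multiples of $\max(|z|^{1-\alpha},\mathrm{const})$'' conflates a vanishing lower bound with a uniform one; no positive constant lower bound for $|g_s(z)|$ uniform in $s$ and $|z|$ exists. Consequently the final claim that the integral is bounded by $tC'$ is also false: on the $s$-interval of length $\asymp|z|^{1-\alpha}$ near $s=0$ where $|v_s(z)|\asymp|z|$, the imaginary part of $v_s(z)$ keeps a fixed sign when $\mathrm{Arg}(z)$ is interior to the sector, so $\left|\int_0^t v_s(z)\,\mathrm{d}s\right|$ genuinely grows like $|z|^{2-\alpha}$; the term $C_4|z|^{2-\alpha}$ in the statement is not slack.

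The missing idea is that the estimate must come from the $s$-integration rather than from a single pointwise bound. Writing $g_s(z)=e^{b(\alpha-1)s}(w+r_s)$ with $r_s\asymp s$, the sine estimate you correctly identify (namely that $\mathrm{Arg}(w)=(1-\alpha)\mathrm{Arg}(z)$ stays in a compact subinterval of $(-\pi,0)$) gives the two-regime bound $|w+r_s|\gtrsim\max(|z|^{1-\alpha},s)$, hence $|v_s(z)|\lesssim\min\bigl(|z|,\,s^{-1/(\alpha-1)}\bigr)$; integrating this in $s$ over $[0,t]$ (splitting at $s=|z|^{1-\alpha}$, and noting $1/(\alpha-1)>1$ so the tail is not integrable down to $0$) produces precisely $C_3+C_4|z|^{2-\alpha}$. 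The paper reaches the same conclusion by substituting $\zeta=(\tfrac{1}{\alpha b}+z^{1-\alpha})e^{b(\alpha-1)s}-\tfrac{1}{\alpha b}$, which turns $\int_0^t v_s(z)\,\mathrm{d}s$ into a contour integral of $\zeta^{1/(1-\alpha)}(\zeta+\tfrac{1}{\alpha b})^{-1}$ starting at the point $z^{1-\alpha}$ of small modulus $|z|^{1-\alpha}$; the singular factor $|\zeta|^{1/(1-\alpha)}$ integrated outward from that point yields the $|z|^{2-\alpha}$ term, while the rest of the contour contributes a constant. Your sine bound plays the same role there (keeping the contour at distance $\gtrsim|z|^{1-\alpha}$ from the origin), but it cannot substitute for carrying out the integration of the singularity. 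As written, the proof does not go through.
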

\begin{proof}
See the appendix.
\end{proof}
Now, consider the process $(Y_{t}^{0})_{t\geqslant0}$ given by \eqref{eq:y_t^0}.
As shown in \cite[p.\,257]{MR1145236}, the function
\[
\mathbb{E}\left[\exp\left(-uY_{t}^{0}\right)\right],\quad u\in\mathcal{U}_{+},
\]
is continuous on $\mathcal{U}_{+}$ and holomorphic on $\mathcal{U}_{+}^{\mathrm{o}}$.
On the other hand, the function $z\mapsto v_{t}(z)$ given in (\ref{defi: complex v_s})
is continuous on $\mathcal{U}_{+}$ and holomorphic on $\mathcal{U}_{+}^{\mathrm{o}}$
for each $t\geqslant0$. Therefore, we have
\begin{equation}
\mathbb{E}\left[e^{-uY_{t}^{0}}\right]=\exp\left\lbrace -a\int_{0}^{t}v_{s}(u)\mathrm{d}s\right\rbrace ,\quad u\in\mathcal{U}_{+}.\label{eq: gene Lap tran for Y^0_t}
\end{equation}
Indeed, the equality (\ref{eq: gene Lap tran for Y^0_t}) is true
at least for $u\in\mathbb{R}_{>0}$ by (\ref{eq:charfunc}). This
and the identity theorem for holomorphic functions (see e.g. \cite[Theorem III.3.2]{MR2513384}) imply (\ref{eq: gene Lap tran for Y^0_t})
for all $u\in\mathcal{U}_{+}$, since both sides of (\ref{eq: gene Lap tran for Y^0_t})
are functions that are continuous on $\mathcal{U}_{+}$ and holomorphic
on $\mathcal{U}_{+}^{\mathrm{o}}$. In particular, the characteristic
function of $Y_{t}^{0}$ with $t>0$ is given by
\[
\mathbb{E}\left[e^{i\xi Y_{t}^{0}}\right]=\exp\left\lbrace -a\int_{0}^{t}v_{s}(i\xi)\mathrm{d}s\right\rbrace ,\quad\xi\in\mathbb{R}.
\]

In the next lemma we obtain the existence of a density function for
$Y_{t}^{0}$ when $t>0$. Note that by \cite[Theorem 1.1]{MR3254346},
we have $Y_{t}^{0}\geqslant0$ a.s. for each $t\geqslant0$.

\begin{lem}\label{lem:4.1} Assume $a>0$ and $b>0$. Then for each
$t>0$, $Y_{t}^{0}$ possesses a density function $f_{Y_{t}^{0}}$
given by
\begin{equation}
f_{Y_{t}^{0}}(x):=\frac{1}{2\pi}\int_{-\infty}^{\infty}e^{-ix\xi}\exp\left\lbrace -a\int_{0}^{t}v_{s}(-i\xi)\mathrm{d}s\right\rbrace \mathrm{d}\xi,\quad x\geqslant0.\label{eq:density of Y^0_t}
\end{equation}
Moreover, the function $f_{Y_{t}^{0}}(x)$ is jointly continuous in
$(t,x)\in(0,\infty)\times\mathbb{{R}}_{\geqslant0}$, and $f_{Y_{t}^{0}}(\cdot)\in C^{\infty}(\mathbb{R}_{\geqslant0})$
for each $t>0$. \end{lem}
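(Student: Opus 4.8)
The plan is to obtain the density of $Y_t^0$ via Fourier inversion, which requires showing that the characteristic function $\xi\mapsto\mathbb{E}[e^{i\xi Y_t^0}]=\exp\{-a\int_0^t v_s(i\xi)\,\mathrm{d}s\}$ is integrable on $\mathbb{R}$, and with enough room to spare to get smoothness. The key input is the growth estimate from Lemma \ref{lem1: pure esti}: for $z=i\xi$ (so $\mathrm{Arg}(z)=\pm\pi/2$), provided $t$ lies in a compact subinterval $[T^{-1},T]\subset(0,\infty)$, we have $\mathrm{Re}(\int_0^t v_s(i\xi)\,\mathrm{d}s)\geqslant -C_1+C_2|\xi|^{2-\alpha}$. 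Since $a>0$ and $\alpha<2$, this yields
\[
\bigl|\mathbb{E}[e^{i\xi Y_t^0}]\bigr|=\exp\Bigl\{-a\,\mathrm{Re}\Bigl(\int_0^t v_s(i\xi)\,\mathrm{d}s\Bigr)\Bigr\}\leqslant e^{aC_1}\exp\{-aC_2|\xi|^{2-\alpha}\},
\]
which decays faster than any polynomial in $|\xi|$. Hence $\xi\mapsto\xi^k\,\mathbb{E}[e^{i\xi Y_t^0}]$ is in $L^1(\mathbb{R})$ for every $k\geqslant0$.

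First I would fix $t>0$ and choose $T>1$ with $t\in[T^{-1},T]$, apply Lemma \ref{lem1: pure esti} to get the bound above, and conclude via the standard Fourier inversion theorem that $Y_t^0$ has a bounded continuous density $f_{Y_t^0}$ given by \eqref{eq:density of Y^0_t} (after rewriting $v_s(i\xi)$ as $v_s(-i\xi)$ through the change $\xi\mapsto-\xi$ and using that the density is real-valued, or just presenting it as the inverse transform of the characteristic function). Because $Y_t^0\geqslant0$ a.s., the density vanishes on $(-\infty,0)$ and we may restrict attention to $x\geqslant0$. For the $C^\infty$ statement I would differentiate under the integral sign: the $k$-th derivative in $x$ brings down a factor $(-i\xi)^k$, and the super-polynomial decay just established makes the differentiated integrand dominated uniformly for $x$ in $\mathbb{R}_{\geqslant0}$, so differentiation under the integral is justified and each derivative is again continuous.

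For the joint continuity in $(t,x)\in(0,\infty)\times\mathbb{R}_{\geqslant0}$, I would again localize $t$ to a compact interval $[T^{-1},T]$ and use a dominated-convergence argument: the integrand $e^{-ix\xi}\exp\{-a\int_0^t v_s(-i\xi)\,\mathrm{d}s\}$ is continuous in $(t,x)$ for each fixed $\xi$ (continuity in $t$ because $s\mapsto v_s(-i\xi)$ is continuous, so the integral depends continuously on $t$), and the bound $e^{aC_1}\exp\{-aC_2|\xi|^{2-\alpha}\}$, with $C_1,C_2$ depending only on $a,b,\alpha,\varepsilon_0,T$ and hence uniform over $t\in[T^{-1},T]$, provides an integrable dominating function independent of $(t,x)$. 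Continuity of the integral then follows, and since $T$ was arbitrary this gives joint continuity on all of $(0,\infty)\times\mathbb{R}_{\geqslant0}$.

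The main obstacle is not any of these routine analytic steps but securing the uniformity of the constants: one must check that the $C_1,C_2$ from Lemma \ref{lem1: pure esti} can indeed be taken uniform in $t$ over a compact interval bounded away from $0$, so that the dominating function works simultaneously for all $t$ in that interval — this is exactly what the statement of Lemma \ref{lem1: pure esti} is engineered to provide. A secondary point to be careful about is the behavior as $t\downarrow0$: the estimate degenerates (indeed $Y_0^0=0$ has no density), which is why the joint continuity is only claimed on the open set $(0,\infty)\times\mathbb{R}_{\geqslant0}$, and localizing to $[T^{-1},T]$ sidesteps the issue cleanly.
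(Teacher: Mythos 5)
Your proposal is correct and follows essentially the same route as the paper: the decay bound $|\exp\{-a\int_0^t v_s(-i\xi)\,\mathrm{d}s\}|\leqslant c_1e^{-c_2|\xi|^{2-\alpha}}$ from Lemma \ref{lem1: pure esti} (uniform over $t\in[T^{-1},T]$), Fourier inversion for existence of the density, dominated convergence for joint continuity, and differentiation under the integral for smoothness (the paper invokes Proposition~28.1 of Sato for this last point, which is the same idea).
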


\begin{proof} Let $T>1$ be fixed. By Lemma \ref{lem1: pure esti},
there exist constants $c_{1},\,c_{2}>0$ such that
\begin{equation}
\left\vert \exp\left\lbrace -a\int_{0}^{t}v_{s}(-i\xi)\mathrm{d}s\right\rbrace \right\vert =\exp\left\lbrace \mathrm{Re}\left(-a\int_{0}^{t}v_{s}(-i\xi)\mathrm{d}s\right)\right\rbrace \leqslant c_{1}e^{-c_{2}\vert\xi\vert^{2-\alpha}}\label{eq:int_re1}
\end{equation}
for all $\xi\in\mathbb{R}$ and $t\in[1/T,T]$, which implies that
$\xi\mapsto\exp\lbrace-a\int_{0}^{t}v_{s}(-i\xi)\mathrm{d}s\rbrace$
is integrable on $\mathbb{R}$. Therefore, by the inversion formula
of Fourier transform, $Y_{t}^{0}$ has a density $f_{Y_{t}^{0}}$
given by (\ref{eq:density of Y^0_t}). The joint continuity of the
density $f_{Y_{t}^{0}}(x)$ in $(t,x)$ follows from (\ref{eq:int_re1}),
(\ref{eq:density of Y^0_t}) and dominated convergence theorem. The
smoothness property of $f_{Y_{t}^{0}}(\cdot)$ is a consequence of
(\ref{eq:int_re1}) and \cite[Proposition~28.1]{MR3185174}. \end{proof}

We remark that for each $t>0$, the function $f_{Y_{t}^{0}}(x)$ given
in (\ref{eq:density of Y^0_t}) is actually well-defined also for
$x<0$, although $f_{Y_{t}^{0}}(x)\equiv0$ for $x\leqslant0$, which
is due to the fact that $Y_{t}^{0}\geqslant0$ a.s.. Next, we would
like to know if $f_{Y_{t}^{0}}(x)>0$ when $x>0$. The next lemma
partly answers this question.

\begin{lem}\label{lem:4.2} For each $t>0$, the density function
$f_{Y_{t}^{0}}(\cdot)$ of $Y_{t}^{0}$ is almost everywhere positive
on $\mathbb{R}_{\geqslant0}$. \end{lem}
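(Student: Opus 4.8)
The plan is to prove positivity of $f_{Y_t^0}$ almost everywhere on $\mathbb{R}_{\geqslant 0}$ by exploiting the semigroup/convolution structure of the $\alpha$-root process together with the fact that the Laplace transform of $Y_t^0$ is $\varphi_2(t,\lambda)$, which behaves like the Laplace transform of an infinitely divisible (in fact self-decomposable-type) law. First I would observe that $Y_t^0$ is the value at time $t$ of the process started from $0$, and by the Markov property and the affine structure (see \eqref{eq: conv for Y^y_t}) one has, for $0<s<t$, the decomposition in law
\[
Y_t^0 \stackrel{d}{=} Y_{t-s}^{0} * (\text{law of } Y_{t}^{0}\text{-increment}),
\]
more precisely $\mathbb{E}[e^{-\lambda Y_t^0}] = \varphi_2(t,\lambda) = \varphi_2(s,\lambda)\cdot\varphi_1(t-s,\lambda, \,\cdot\,)$-type factorization coming from $v_t(\lambda)=v_{t-s}(v_s(\lambda))$ and the cocycle property of $\int_0^t v_r\,dr$. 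The key structural input is that $\varphi_2(t,\lambda)=\exp\{-a\int_0^t v_s(\lambda)\,ds\}$ is the Laplace transform of a distribution $\mu_t$ on $\mathbb{R}_{\geqslant 0}$ that is a convolution over the time interval $[0,t]$, hence $\mu_t = \mu_{t-s} * \nu_{s,t}$ for suitable probability measures; and crucially $\mu_t$ has no atom at $0$ for $t>0$ because $\int_0^t v_s(\lambda)\,ds \to \infty$ as $\lambda\to\infty$ (this follows since $v_s(\lambda)\to d(s)>0$ from \eqref{defi:d}, so the integral diverges, forcing $\mathbb{P}(Y_t^0=0)=0$).

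Next I would set $S_t := \operatorname{supp}(f_{Y_t^0}\,dx) \subseteq \mathbb{R}_{\geqslant 0}$, the support of the law of $Y_t^0$, and aim to show $S_t = [c_t,\infty)$ for some $c_t\geqslant 0$ (in fact $c_t=0$), after which almost-everywhere positivity of the continuous density $f_{Y_t^0}$ on $\mathbb{R}_{\geqslant 0}$ will follow: a nonnegative continuous function whose support is all of $\mathbb{R}_{\geqslant 0}$ is positive on a dense open set, hence a.e. positive. To identify the support I would use: (i) $0\in \overline{S_t}$ because $Y_t^0\to 0$ in probability as $t\downarrow 0$ and, via the convolution factorization $\mu_t=\mu_{s}*\nu_{s,t}$, points of $S_s$ contribute; more directly, $f_{Y_t^0}$ is continuous and $\mathbb{P}(Y_t^0 < \varepsilon)>0$ for all $\varepsilon>0$ (since $v_t(\lambda)$ is bounded, the left tail is non-degenerate), so $0$ is a limit point of $S_t$; (ii) $S_t$ is closed under addition along the convolution semigroup, so $S_t \supseteq S_{t_1}+\cdots+S_{t_k}$ whenever $t_1+\cdots+t_k=t$; and (iii) the jump measure of the driving process is $C_\alpha z^{-1-\alpha}\mathbbm{1}_{\{z>0\}}dz$, supported on all of $(0,\infty)$, so arbitrarily large and arbitrarily small positive jumps occur, which combined with the drift should push $S_t$ to be an unbounded interval. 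Alternatively, and perhaps more cleanly, I would argue that the density $f_{Y_t^0}$ is \emph{real-analytic} on $(0,\infty)$ — this follows from the exponential decay estimate \eqref{eq:int_re1} of the form $c_1 e^{-c_2|\xi|^{2-\alpha}}$ together with the fact that $v_s(-i\xi)$ extends holomorphically in $\xi$ across a strip, so the Fourier integral \eqref{eq:density of Y^0_t} defines an analytic function of $x$ — and a nonzero real-analytic function on a connected open set has isolated zeros, hence is a.e. positive on any connected component where it is not identically zero; one then only needs that $f_{Y_t^0}\not\equiv 0$ on $(0,\infty)$ (true, as $Y_t^0$ is not identically $0$) and that $(0,\infty)$ is the relevant connected component, i.e. $f_{Y_t^0}$ does not vanish on a whole subinterval.

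The main obstacle I anticipate is justifying real-analyticity rigorously: the bound \eqref{eq:int_re1} gives integrability and $C^\infty$-smoothness (as already used in Lemma~\ref{lem:4.1}), but upgrading to analyticity requires a holomorphic extension of $\xi\mapsto \exp\{-a\int_0^t v_s(-i\xi)\,ds\}$ to a complex neighborhood of $\mathbb{R}$ with a uniform bound permitting differentiation under the integral and estimation of the Taylor coefficients of $f_{Y_t^0}$; this is where the estimates of Lemma~\ref{lem1: pure esti} and Lemma~\ref{lem2: pure esti} on $\int_0^t v_s(z)\,ds$ for $z$ near the imaginary axis will be needed, and care is required because $v_t(z)$ has a branch-type singularity structure inherited from $z^{1-\alpha}$. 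If the analyticity route proves too delicate, I would fall back on the convolution-semigroup/support argument of the previous paragraph, whose only real subtlety is showing the support of $Y_t^0$ contains an interval of the form $(0,\delta)$ — this can be extracted from the fact that $f_{Y_t^0}$ is continuous and strictly positive near some point combined with the semigroup property $f_{Y_t^0} = f_{Y_{t/2}^0} * (\text{density of the }[t/2,t]\text{ increment})$, iterating down to small time where the contribution concentrates near $0$.
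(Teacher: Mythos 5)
Your high-level strategy on the ``cleaner'' route coincides with the paper's: show that $x\mapsto f_{Y_t^0}(x)$ is (real-)analytic on $(0,\infty)$, invoke the identity theorem to conclude the zero set is discrete, hence countable, hence Lebesgue-null. However, the mechanism you propose for establishing analyticity does not work, and this is precisely the nontrivial content of the lemma. The decay bound \eqref{eq:int_re1} is of stretched-exponential type $e^{-c_2|\xi|^{2-\alpha}}$ with $2-\alpha\in(0,1)$; estimating Taylor coefficients of the Fourier integral from such a bound only places $f_{Y_t^0}$ in a Gevrey class strictly larger than the analytic class, so ``differentiation under the integral plus estimation of Taylor coefficients'' cannot deliver analyticity. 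Likewise, a holomorphic extension of $\xi\mapsto\exp\{-a\int_0^t v_s(-i\xi)\,\mathrm{d}s\}$ to a full strip around $\mathbb{R}$ is obstructed by the branch point of $z^{1-\alpha}$ at $z=0$. The paper's actual device — which is absent from your sketch — is a contour rotation: after reducing to $\mathrm{Re}\bigl(\frac{1}{\pi}\int_{-\infty}^0\cdots\bigr)$ by conjugate symmetry and substituting $z=-i\xi$, the integration path is rotated from the positive imaginary axis to the negative real axis (the arcs being controlled by Lemmas \ref{lem1: pure esti} and \ref{lem2: pure esti}), yielding a genuine Laplace-transform representation $f_{Y_t^0}(x)=\frac{1}{\pi}\int_0^\infty e^{-xz}\bigl\{-\mathrm{Im}\bigl(\exp\{-a\int_0^t v_s(-z)\,\mathrm{d}s\}\bigr)\bigr\}\mathrm{d}z$, which is manifestly holomorphic in $x$ on $\mathcal{U}_+^{\mathrm{o}}$. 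You correctly flag analyticity as the main obstacle and point to the appendix lemmas, but you do not supply the idea that resolves it.

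Your fallback route contains a genuine logical error: from ``the support of the law of $Y_t^0$ is all of $\mathbb{R}_{\geqslant0}$'' you infer that the continuous density is ``positive on a dense open set, hence a.e.\ positive.'' The second implication is false: a dense open subset of $(0,\infty)$ can have arbitrarily small Lebesgue measure (its complement is a closed nowhere-dense set, which may be a fat Cantor set of positive measure), and there exist continuous nonnegative densities with full support that vanish on a set of positive measure. So even if the support identification were carried out, it would not prove the lemma. The convolution/semigroup observations (no atom at $0$, factorization of $\varphi_2$) are correct but do not close this gap.
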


\begin{proof} Basically, the idea of the proof is as follows. We
will show the following:
\begin{claim*}
The function
\[
x\mapsto f_{Y_{t}^{0}}(x),\quad x\in\mathbb{R}_{>0},
\]
can be extended to a holomorphic function on $\mathcal{U}_{+}^{\mathrm{o}}$.
\end{claim*}
If this claim is true, then the set $A_{n}:=\lbrace x>1/n\thinspace:\thinspace f_{Y_{t}^{0}}(x)=0\rbrace$
with $n\in\mathbb{N}$ must be discrete, that is, for each $x\in A_{n}$,
one can find a neighbourhood of $x$ whose intersection with $A_{n}$
equals $x$; otherwise the identity theorem for holomorphic functions
implies that $f_{Y_{t}^{0}}(x)\equiv0$ for $x>0$. As a consequence,
$A_{n}$ is countable, which implies that $A:=\cup_{n\in\mathbb{N}}A_{n}$
is also countable and thus has Lebesgue measure $0$.

Let $x>0$ be fixed. We will complete the proof of the above claim
in several steps.\\

``\emph{Step 1}'': We derive a simpler representation for $f_{Y_{t}^{0}}(x)$.
We have
\begin{align}
f_{Y_{t}^{0}}(x) & =\frac{1}{2\pi}\int_{-\infty}^{\infty}e^{-ix\xi}\exp\left\lbrace -a\int_{0}^{t}v_{s}(-i\xi)\mathrm{d}s\right\rbrace \mathrm{d}\xi\nonumber \\
 & =\frac{1}{2\pi}\int_{0}^{\infty}e^{-ix\xi}\exp\left\lbrace -a\int_{0}^{t}v_{s}(-i\xi)\mathrm{d}s\right\rbrace \mathrm{d}\xi\nonumber \\
 & \qquad+\frac{1}{2\pi}\int_{-\infty}^{0}e^{-ix\xi}\exp\left\lbrace -a\int_{0}^{t}v_{s}(-i\xi)\mathrm{d}s\right\rbrace \mathrm{d}\xi\nonumber \\
 & =\frac{1}{2\pi}\int_{-\infty}^{0}e^{ix\xi}\exp\left\lbrace -a\int_{0}^{t}v_{s}(i\xi)\mathrm{d}s\right\rbrace \mathrm{d}\xi\nonumber \\
 & \qquad+\frac{1}{2\pi}\int_{-\infty}^{0}e^{-ix\xi}\exp\left\lbrace -a\int_{0}^{t}v_{s}(-i\xi)\mathrm{d}s\right\rbrace \mathrm{d}\xi.\label{eq1: congu-relation}
\end{align}
For $\xi<0$, we have
\begin{align*}
\overline{v_{s}(-i\xi)} & =\left(\left(\tfrac{1}{\alpha b}+\overline{(-i\xi)^{1-\alpha}}\right)e^{b(\alpha-1)s}-\tfrac{1}{\alpha b}\right)^{\tfrac{1}{1-\alpha}}\\
 & =\left(\left(\tfrac{1}{\alpha b}+(i\xi)^{1-\alpha}\right)e^{b(\alpha-1)s}-\tfrac{1}{\alpha b}\right)^{\tfrac{1}{1-\alpha}}=v_{s}(i\xi),
\end{align*}
which implies
\begin{equation}
\overline{e^{-ix\xi}\exp\left\lbrace -a\int_{0}^{t}v_{s}(-i\xi)\mathrm{d}s\right\rbrace }=e^{ix\xi}\exp\left\lbrace -a\int_{0}^{t}v_{s}(i\xi)\mathrm{d}s\right\rbrace .\label{eq2: congu-relation}
\end{equation}
By (\ref{eq1: congu-relation}) and (\ref{eq2: congu-relation}),
we get
\begin{equation}
f_{Y_{t}^{0}}(x)=\mathrm{Re}\left(\frac{1}{\pi}\int_{-\infty}^{0}e^{-ix\xi}\exp\left\lbrace -a\int_{0}^{t}v_{s}(-i\xi)\mathrm{d}s\right\rbrace \mathrm{d}\xi\right).\label{eq1:final f Y_t^0}
\end{equation}
For simplicity, let
\begin{equation}
I:=\frac{1}{\pi}\int_{-\infty}^{0}e^{-ix\xi}\exp\left\lbrace -a\int_{0}^{t}v_{s}(-i\xi)\mathrm{d}s\right\rbrace \mathrm{d}\xi.\label{eq2:final f Y_t^0}
\end{equation}
\\

``\emph{Step 2}'': We calculate $I$ by contour integration. By
a change of variables $z:=-i\xi$, we get
\begin{align}
I & =\frac{-i}{\pi}\int_{0}^{i\infty}e^{xz}\exp\left\lbrace -a\int_{0}^{t}v_{s}(z)\mathrm{d}s\right\rbrace \mathrm{d}z\nonumber \\
 & =\lim_{K\to\infty}\frac{-i}{\pi}\int_{iK^{-1}}^{iK}e^{xz}\exp\left\lbrace -a\int_{0}^{t}v_{s}(z)\mathrm{d}s\right\rbrace \mathrm{d}z.\label{eq1: convergence I}
\end{align}
Define two paths $\Gamma_{1,K}$ and $\Gamma_{2,K}$ by
\[
\Gamma_{1,K}(\vartheta):=Ke^{i\vartheta},\quad\vartheta\in\left[\frac{\pi}{2},\pi\right]\quad\text{and}\quad\Gamma_{2,K}(\vartheta):=K^{-1}e^{i\vartheta},\quad\vartheta\in\left[\frac{\pi}{2},\pi\right].
\]
According to (\ref{defi: complex v_s}), we see that the function
\[
z\mapsto e^{yz}\exp\left\lbrace -a\int_{0}^{t}v_{s}(z)\mathrm{d}s\right\rbrace ,\quad z\in\mathcal{O}_{1}:=\left\{ \rho\exp\left(i\vartheta\right)\,:\,\rho>0,\vartheta\in\left[\frac{\pi}{2},\pi\right]\right\} ,
\]
can be extended to a holomorphic function on $\mathcal{O}_{2}:=\{ \rho\exp\left(i\vartheta\right)\,:\,\rho>0,\vartheta\in(0,3\pi/2)\}$.
Therefore, we have
\begin{align}\label{eq:contour_int_2}
&\int_{iK^{-1}}^{iK}e^{xz}\exp\left\lbrace -a\int_{0}^{t}v_{s}(z)\mathrm{d}s\right\rbrace \mathrm{d}z\\
 & \quad\quad=\int_{-K^{-1}}^{-K}e^{xz}\exp\left\lbrace -a\int_{0}^{t}v_{s}(z)\mathrm{d}s\right\rbrace \mathrm{d}z-\int_{\Gamma_{1,K}}e^{xz}\exp\left\lbrace -a\int_{0}^{t}v_{s}(z)\mathrm{d}s\right\rbrace \mathrm{d}z\nonumber\\
 & \quad\quad\quad+\int_{\Gamma_{2,K}}e^{xz}\exp\left\lbrace -a\int_{0}^{t}v_{s}(z)\mathrm{d}s\right\rbrace \mathrm{d}z.\nonumber
\end{align}
 Since $\lim_{z\to0}e^{xz}\exp\left\lbrace -a\int_{0}^{t}v_{s}(z)\mathrm{d}s\right\rbrace =1$,
it follows that
\begin{equation}
\lim_{K\to\infty}\int_{\Gamma_{2,K}}e^{xz}\exp\left\lbrace -a\int_{0}^{t}v_{s}(z)\mathrm{d}s\right\rbrace \mathrm{d}z\bigg)=0.\label{eq3: convergence I}
\end{equation}

To estimate the second term on the right-hand side of (\ref{eq:contour_int_2}),
we divide the path $\Gamma_{1,K}$ into two parts, namely
\[
\Gamma_{11,K}(\vartheta):=Ke^{i\vartheta},\quad\vartheta\in\left[\frac{\pi}{2},\frac{\pi}{2}+\varepsilon_{0}\right]\quad\text{and}\quad\Gamma_{12,K}(\vartheta)=:Ke^{i\vartheta},\quad\vartheta\in\left[\frac{\pi}{2}+\varepsilon_{0},\pi\right],
\]
with $\varepsilon_{0}>0$ being the constant appearing in Lemmas \ref{lem1: pure esti}
and \ref{lem2: pure esti}. Then
\begin{align*}
&\int_{\Gamma_{1,K}}e^{xz}\exp\left\lbrace -a\int_{0}^{t}v_{s}(z)\mathrm{d}s\right\rbrace \mathrm{d}z\\
 & \quad=\int_{\Gamma_{11,K}}e^{xz}\exp\left\lbrace -a\int_{0}^{t}v_{s}(z)\mathrm{d}s\right\rbrace \mathrm{d}z+\int_{\Gamma_{12,K}}e^{xz}\exp\left\lbrace -a\int_{0}^{t}v_{s}(z)\mathrm{d}s\right\rbrace \mathrm{d}z\\
 & \quad:=II_{1}(K)+II_{2}(K).
\end{align*}
If we can show that $\lim_{K\to\infty}II_{1}(K)=0$ and $\lim_{K\to\infty}II_{2}(K)=0$,
then it follows from (\ref{eq1: convergence I}), (\ref{eq:contour_int_2})
and (\ref{eq3: convergence I}) that
\begin{equation}
I=\frac{-i}{\pi}\int_{0}^{-\infty}e^{xz}\exp\left\lbrace -a\int_{0}^{t}v_{s}(z)\mathrm{d}s\right\rbrace \mathrm{d}z.\label{eq3:final f Y_t^0}
\end{equation}
\\

``\emph{Step 3}'': We show that $\lim_{K\to\infty}II_{1}(K)=0$.
If $\vartheta\in\left[\pi/2,\pi/2+\varepsilon_{0}\right]$, then
\[
\left\vert e^{xKe^{i\vartheta}}\right\vert =e^{\mathrm{Re}\left(xKe^{i\vartheta}\right)}=e^{xK\cos(\vartheta)}\leqslant1.
\]
By Lemma \ref{lem1: pure esti}, we get
\begin{align}
\left\vert II_{1}(K)\right\vert  & =\left\vert \int_{\tfrac{\pi}{2}}^{\tfrac{\pi}{2}+\varepsilon_{0}}iKe^{i\vartheta}e^{xKe^{i\vartheta}}e^{-a\int_{0}^{t}v_{s}\left(Ke^{i\vartheta}\right)\mathrm{d}s}\mathrm{d}\vartheta\right\vert\nonumber \\
 & \leqslant K\int_{\tfrac{\pi}{2}}^{\tfrac{\pi}{2}+\varepsilon_{0}}\left\vert e^{-a\int_{0}^{t}v_{s}\left(Ke^{i\vartheta}\right)\mathrm{d}s}\right\vert \mathrm{d}\vartheta\leqslant K\varepsilon_{0}e^{aC_{1}-aC_{2}K^{2-\alpha}},
\label{eq:contour1zero_1}
\end{align}
which implies
\[
\lim_{K\to\infty}\left|II_{1}(K)\right|\leqslant\lim_{K\to\infty}K\varepsilon_{0}e^{aC_{1}-aC_{2}K^{2-\alpha}}=0.
\]
\\

``\emph{Step 4}'': We show that $\lim_{K\to\infty}II_{2}(K)=0$.
In case $\vartheta\in\left[\pi/2+\varepsilon_{0},\pi\right]$, then
\begin{equation}
\left\vert e^{xKe^{i\vartheta}}\right\vert =e^{\mathrm{Re}\left(xKe^{i\vartheta}\right)}=e^{xK\cos(\vartheta)}\leqslant e^{xK\cos\left(\tfrac{\pi}{2}+\varepsilon_{0}\right)}=e^{-xK\sin(\varepsilon_{0})}.\label{esti1: II(T)}
\end{equation}
So
\begin{align*}
 \left\vert II_{2}(K)\right\vert &=\left\vert \int_{\tfrac{\pi}{2}+\varepsilon_{0}}^{\pi}iKe^{i\vartheta}e^{xKe^{i\vartheta}}\exp\left\lbrace -a\int_{0}^{t}v_{s}\left(Ke^{i\vartheta}\right)\mathrm{d}s\right\rbrace \mathrm{d}\vartheta\right\vert \\
 &\leqslant K\int_{\tfrac{\pi}{2}+\varepsilon_{0}}^{\pi}\left\vert e^{xKe^{i\vartheta}}\right\vert \left\vert \exp\left\lbrace -a\int_{0}^{t}v_{s}\left(Ke^{i\vartheta}\right)\mathrm{d}s\right\rbrace \right\vert \mathrm{d}\vartheta\\
 & \leqslant Ke^{-xK\sin(\varepsilon_{0})}\int_{\tfrac{\pi}{2}+\varepsilon_{0}}^{\pi}\exp\left\lbrace a\left\vert \int_{0}^{t}v_{s}\left(Ke^{i\vartheta}\right)\mathrm{d}s\right\vert \right\rbrace \mathrm{d}\vartheta.
\end{align*}
By Lemma \ref{lem2: pure esti}, we get
\[
\lim_{K\to\infty}\left|II_{2}(K)\right|\leqslant\lim_{K\to\infty}K\left(\tfrac{\pi}{2}-\varepsilon_{0}\right)e^{-xK\sin(\varepsilon_{0})}e^{aC_{3}}e^{aC_{4}K^{2-\alpha}}=0.
\]
\\

``\emph{Step 5}'': By (\ref{eq1:final f Y_t^0}), (\ref{eq2:final f Y_t^0})
and (\ref{eq3:final f Y_t^0}), we get
\begin{align*}
f_{Y_{t}^{0}}(x) & =\mathrm{Re}\left(\frac{-i}{\pi}\int_{0}^{-\infty}e^{xz}\exp\left\lbrace -a\int_{0}^{t}v_{s}(z)\mathrm{d}s\right\rbrace \mathrm{d}z\right)\\
 & =\mathrm{Re}\left(\frac{i}{\pi}\int_{0}^{\infty}e^{-xz}\exp\left\lbrace -a\int_{0}^{t}v_{s}(-z)\mathrm{d}s\right\rbrace \mathrm{d}z\right)\\
 & =\mathrm{-Im}\left(\frac{1}{\pi}\int_{0}^{\infty}e^{-xz}\exp\left\lbrace -a\int_{0}^{t}v_{s}(-z)\mathrm{d}s\right\rbrace \mathrm{d}z\right)\\
 & =\frac{1}{\pi}\int_{0}^{\infty}e^{-xz}\left\{ -\mathrm{Im}\left(\exp\left\lbrace -a\int_{0}^{t}v_{s}(-z)\mathrm{d}s\right\rbrace \right)\right\} \mathrm{d}z.
\end{align*}

Let $x_{0}>0$ be fixed. By Lemma \ref{lem2: pure esti}, for $z\in\mathbb{R}_{\geqslant0}$
and $x\in\mathbb{C}$ with $\mathrm{Re}(x)\geqslant x_{0}$, we have
\begin{align}
\left|ze^{-xz}\mathrm{Im}\bigg(\exp\left\lbrace -a\int_{0}^{t}v_{s}(-z)\mathrm{d}s\right\rbrace \bigg)\right| & \leqslant ze^{-\mathrm{Re}\left(xz\right)}\left|\exp\left\lbrace -a\int_{0}^{t}v_{s}(-z)\mathrm{d}s\right\rbrace \right|\nonumber \\
 & \leqslant ze^{-x_{0}z}\left|\exp\left\lbrace -a\int_{0}^{t}v_{s}(z)\mathrm{d}s\right\rbrace \right|\notag\\
 & \leqslant ze^{-x_{0}z} \exp\left\lbrace aC_{3}+aC_{4}|z|^{2-\alpha}\right\rbrace ,\label{eq:int_real_integral}
\end{align}
where the right-hand side of (\ref{eq:int_real_integral}) is an integrable
function (with the variable $z$) on $\mathbb{R}_{\geqslant0}$. By
Lebesgue differential theorem, we see that the function
\[
x\mapsto\frac{1}{\pi}\int_{0}^{\infty}e^{-xz}\left\{ -\mathrm{Im}\left(\exp\left\lbrace -a\int_{0}^{t}v_{s}(-z)\mathrm{d}s\right\rbrace \right)\right\} \mathrm{d}z,\quad x\in\mathcal{U}_{+}^{\mathrm{o}},
\]
is holomorphic, which means that $x\mapsto f_{Y_{t}^{0}}(x)$ has
a holomorphic extension on $\mathcal{U}_{+}^{\mathrm{o}}$. This
completes the proof. \end{proof}

With the help of the previous lemma, we are now able to prove the
main result of this section. Recall that the process $(Y_{t}^{y})_{t\geqslant0}$ is given by (\ref{defi: Y^y_t}).

\begin{prop}\label{thm:lower_bound_y_t} Assume $a>0$ and $b>0$.
Then for each $y\geqslant0$ and $t>0$, $Y_{t}^{y}$ possesses a
density function $f_{Y_{t}^{y}}$ given by
\begin{equation}
f_{Y_{t}^{y}}(x):=\frac{1}{2\pi}\int_{-\infty}^{\infty}e^{-ix\xi}\exp\left\lbrace -a\int_{0}^{t}v_{s}(-i\xi)\mathrm{d}s-yv_{t}(-i\xi)\right\rbrace \mathrm{d}\xi,\quad x\geqslant0,\label{defi: density for Y^y_t}
\end{equation}
where $f_{Y_{t}^{y}}(\cdot)\in C^{\infty}(\mathbb{R}_{\geqslant0})$
and $f_{Y_{t}^{y}}(x)>0$ for all $x>0$. Moreover, the function $f_{Y_{t}^{y}}(x)$
is jointly continuous in $(t,y,x)\in(0,\infty)\times\mathbb{{R}}_{\geqslant0}\times\mathbb{{R}}_{\geqslant0}$.
\end{prop}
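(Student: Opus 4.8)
\emph{Overview.} The plan is to prove the three assertions of Proposition~\ref{thm:lower_bound_y_t} in order — existence of the density together with the formula \eqref{defi: density for Y^y_t}, its smoothness and joint continuity, and its strict positivity on $(0,\infty)$ — throughout exploiting the decomposition \eqref{eq: conv for Y^y_t}, the auxiliary process $(Z_t^y)_{t\geqslant0}$ of Subsection~\ref{case_i}, the estimates of Lemmas~\ref{lem1: pure esti} and \ref{lem2: pure esti}, and Lemmas~\ref{lem:4.1} and \ref{lem:4.2}. By \eqref{eq: conv for Y^y_t} the law of $Y_t^y$ is the convolution of the law of $Z_t^y$ (Laplace transform $\varphi_1(t,\cdot,y)$) with that of $Y_t^0$ (Laplace transform $\varphi_2(t,\cdot)$); since $Y_t^0$ has the density $f_{Y_t^0}$ by Lemma~\ref{lem:4.1}, it follows that $Y_t^y$ has a density and that $f_{Y_t^y}(x)=\int_{[0,\infty)}f_{Y_t^0}(x-u)\,\mathbb{P}(Z_t^y\in\mathrm{d}u)$ for $x\in\mathbb{R}$.

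\emph{The explicit formula.} To identify the density above with \eqref{defi: density for Y^y_t}, I would first extend, exactly as in the paragraph preceding \eqref{eq: gene Lap tran for Y^0_t} and by the identity theorem for holomorphic functions, the relation $\mathbb{E}[e^{-\lambda Y_t^y}]=\varphi_1(t,\lambda,y)\varphi_2(t,\lambda)$ from $\lambda\in\mathbb{R}_{>0}$ to all $\lambda\in\mathcal{U}_+$ — both sides being continuous on $\mathcal{U}_+$ and holomorphic on $\mathcal{U}_+^{\mathrm{o}}$, by the corresponding properties of $z\mapsto v_t(z)$ — so that the characteristic function of $Y_t^y$ is $\exp\{-a\int_0^t v_s(-i\xi)\,\mathrm{d}s-y\,v_t(-i\xi)\}$. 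Its integrability in $\xi$ is the only delicate point, and the new factor is harmless: $|e^{-y\,v_t(-i\xi)}|=|\mathbb{E}[e^{i\xi Z_t^y}]|\leqslant1$ because $Z_t^y\geqslant0$, while $|\exp\{-a\int_0^t v_s(-i\xi)\,\mathrm{d}s\}|\leqslant c_1e^{-c_2|\xi|^{2-\alpha}}$ by Lemma~\ref{lem1: pure esti}, just as in \eqref{eq:int_re1}. Hence the characteristic function is integrable and Fourier inversion yields \eqref{defi: density for Y^y_t}.

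\emph{Smoothness and joint continuity.} The bound $c_1e^{-c_2|\xi|^{2-\alpha}}$ is locally uniform in $(t,y)$ on $(0,\infty)\times\mathbb{R}_{\geqslant0}$, so differentiating \eqref{defi: density for Y^y_t} in $x$ under the integral sign any number of times — each derivative only inserting a polynomial factor in $\xi$, still dominated — gives $f_{Y_t^y}(\cdot)\in C^\infty(\mathbb{R}_{\geqslant0})$, and dominated convergence gives joint continuity of $f_{Y_t^y}(x)$ in $(t,y,x)$. This is the argument of Lemma~\ref{lem:4.1} with the extra, innocuous factor $e^{-y\,v_t(-i\xi)}$.

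\emph{Strict positivity} is, I expect, the main obstacle. From the convolution formula and the atom of $Z_t^y$ at the origin (cf.\ \eqref{defi:d}--\eqref{meaning:d}), $f_{Y_t^y}(x)\geqslant\mathbb{P}(Z_t^y=0)\,f_{Y_t^0}(x)=e^{-yd}f_{Y_t^0}(x)$ with $d$ as in \eqref{defi:d}, so it suffices to prove $f_{Y_t^0}(x)>0$ for every $x>0$ and $t>0$. By the Claim in the proof of Lemma~\ref{lem:4.2}, $f_{Y_t^0}$ extends holomorphically to $\mathcal{U}_+^{\mathrm{o}}$; hence its zero set $N_t:=\{x>0:f_{Y_t^0}(x)=0\}$ is discrete, so at most countable, while $f_{Y_t^0}$ is continuous and nonnegative on $(0,\infty)$. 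To pass from positivity a.e.\ to positivity everywhere I would use the Markov property over two time-steps: as the law of $Y_t^0$ is $f_{Y_t^0}(w)\,\mathrm{d}w$ (Lemma~\ref{lem:4.1}),
\[
f_{Y_{2t}^0}(x)=\int_0^\infty f_{Y_t^0}(w)\,f_{Y_t^w}(x)\,\mathrm{d}w,\qquad x>0,
\]
both sides being continuous in $x$, so the identity is pointwise. Since $f_{Y_t^0}(w)>0$ for a.e.\ $w$, positivity of $f_{Y_{2t}^0}(x)$ — hence, $t$ and $x$ being arbitrary, of $f_{Y_\tau^0}(x)$ for all $\tau,x>0$, which finishes the proof — reduces to the claim that $f_{Y_t^w}(x)>0$ for all $w>0$ and $x>0$. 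The key ingredient, and the part requiring the most work, is that for $w>0$ the restriction to $(0,\infty)$ of the law of $Z_t^w$ is absolutely continuous, with a continuous and strictly positive density $g_{t,w}$ there; granting this,
\[
f_{Y_t^w}(x)=e^{-wd}f_{Y_t^0}(x)+\int_0^x f_{Y_t^0}(v)\,g_{t,w}(x-v)\,\mathrm{d}v>0,
\]
since on $(0,x)$ the integrand is nonnegative and strictly positive a.e.\ ($g_{t,w}>0$ on $(0,\infty)$ and $f_{Y_t^0}>0$ a.e.\ on $(0,x)$). To obtain $g_{t,w}$ I would start from the representation $Z_t^w=we^{-bt}+\int_0^t e^{-b(t-s)}\sqrt[\alpha]{Z^w_{s-}}\,\mathrm{d}L_s$: conditioning on the positive-probability event that the path of $Z^w$ stays bounded away from $0$ on a short initial interval, the $\alpha$-stable stochastic integral smooths the conditional law there, giving an absolutely continuous component, and strict positivity on all of $(0,\infty)$ then follows since the drift $-bZ$ can bring the process into any neighbourhood of $0$ and the positive jumps of $L$ can carry it arbitrarily high.
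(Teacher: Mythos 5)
Your treatment of existence, the explicit formula, smoothness and joint continuity coincides with the paper's: the convolution/product structure \eqref{eq2: conv for Y^y_t}, the bound $|e^{-yv_t(-i\xi)}|\leqslant 1$ combined with \eqref{eq:int_re1}, Fourier inversion, and differentiation under the integral are exactly what is done there. The divergence, and the problem, is in the positivity argument.

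Your entire positivity proof rests on the assertion that for $w>0$ the restriction of the law of $Z_t^w$ to $(0,\infty)$ is absolutely continuous with a (continuous) strictly positive density $g_{t,w}$. This is precisely the hard analytic content of your route, and the justification you offer --- ``the $\alpha$-stable stochastic integral smooths the conditional law'' --- is a heuristic, not a proof; for an SDE with a coefficient $\sqrt[\alpha]{\cdot}$ degenerating at the boundary, such regularity requires genuine work (Malliavin-type arguments or sharp transform estimates). Worse, the available transform information points against an easy proof: the Laplace transform of $Z_t^w$ is $e^{-wv_t(\lambda)}$ with $v_t(\lambda)\to d>0$ and $v_t(\lambda)-d\sim c\,\lambda^{1-\alpha}$ as $\lambda\to\infty$, so after removing the atom at $0$ the transform of the remaining mass decays only like $\lambda^{1-\alpha}$ with $\alpha-1<1$; this is not integrable, so Fourier inversion does not deliver a bounded continuous density, and neither absolute continuity nor everywhere-positivity of $g_{t,w}$ is established anywhere. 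Everything downstream (positivity of $f_{Y_t^w}(x)$ for $w>0$, then the Chapman--Kolmogorov step giving $f_{Y_{2t}^0}>0$ everywhere) is conditional on this unproven claim. The paper avoids the issue entirely: it never needs any regularity of $\mu_{Z_t^y}$ beyond the two facts $\mu_{Z_t^y}(\{0\})=e^{-yd}>0$ and $\mu_{Z_t^y}((0,\delta])>0$ for every $\delta>0$, the latter obtained by an elementary L'H\^opital computation on the explicit Laplace transform (see \eqref{eq:limit_lambda_y_t}). Combined with the discreteness of the zero set of $f_{Y_t^0}$ (so that if $f_{Y_t^0}(x)=0$ then $f_{Y_t^0}(x-z)>0$ for all sufficiently small $z>0$), the convolution formula \eqref{eq:lower_bound_density} then yields $f_{Y_t^y}(x)>0$ in both cases. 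I would recommend replacing your Claim~A and the two-step Markov argument by this Laplace-transform estimate; as it stands, your proof of positivity has a genuine gap.
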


\begin{proof} In view of (\ref{eq:charfunc}) and (\ref{eq: conv for Y^y_t}),
we have
\begin{equation}
\mathbb{E}\left[e^{i\xi Y_{t}^{y}}\right] =\mathbb{E}\left[e^{i\xi Y_{t}^{0}}\right]\cdot\mathbb{E}\left[e^{i\xi Z_{t}^{y}}\right]
  =\exp\left\lbrace -a\int_{0}^{t}v_{s}(-i\xi)\mathrm{d}s-yv_{t}(-i\xi)\right\rbrace,
\label{eq2: conv for Y^y_t}
\end{equation}
where $\xi\in\mathbb{R}.$ It follows from (\ref{eq:int_re1}) that
\[
\left|\mathbb{E}\left[e^{i\xi Y_{t}^{y}}\right]\right|\leqslant\left|\mathbb{E}\left[e^{i\xi Y_{t}^{0}}\right]\right|\le c_{1}e^{-c_{2}\vert\xi\vert^{2-\alpha}}
\]
for all $\xi\in\mathbb{R}$ and $t\in[1/T,T]$, where $T>1$ and $c_{1},\,c_{2}>0$
are constants depending on $T$. It follows that for $t>0$, $Y_{t}^{y}$
has a density $f_{Y_{t}^{y}}$ given by (\ref{defi: density for Y^y_t}).
Proceeding in the same way as in Lemma \ref{lem:4.1}, we obtain the
desired continuity and smoothness properties of $f_{Y_{t}^{y}}$.

We next show that if $t>0$, then $f_{Y_{t}^{y}}(x)>0$ for all $x>0$.
According to (\ref{eq2: conv for Y^y_t}), we see that the law of
$Y_{t}^{y}$, denoted by $\mu_{Y_{t}^{y}}$, is the convolution of
the laws of $Z_{t}^{y}$ and $Y_{t}^{0}$, which we denote by $\mu_{Z_{t}^{y}}$
and $\mu_{Y_{t}^{0}}$, respectively. So $\mu_{Y_{t}^{y}}=\mu_{Z_{t}^{y}}\ast\mu_{Y_{t}^{0}}$.
From this we deduce that for all $x>0$,
\begin{align}
f_{Y_{t}^{y}}(x) & =\int_{\mathbb{R}_{\geqslant0}}f_{Y_{t}^{0}}(x-z)\mu_{Z_{t}^{y}}(\mathrm{d}z)\nonumber \\
 & =\int_{(0,\infty)}f_{Y_{t}^{0}}(x-z)\mu_{Z_{t}^{y}}(\mathrm{d}z)+f_{Y_{t}^{0}}(x)\mu_{Z_{t}^{y}}\left(\lbrace0\rbrace\right).\label{eq:lower_bound_density}
\end{align}
By Lemma \ref{lem:4.2}, the density function $f_{Y_{t}^{0}}(x)$
of $Y_{t}^{0}$ is strictly positive for almost all $x>0$. In the
following we consider a fixed $x>0$ and distinguish between two cases.\\

``\emph{Case 1}'': $f_{Y_{t}^{0}}(x)>0$. It follows from (\ref{eq:lower_bound_density})
that
\begin{equation}
f_{Y_{t}^{y}}(x)\geqslant f_{Y_{t}^{0}}(x)\mu_{Z_{t}^{y}}\left(\lbrace0\rbrace\right)>0,\label{eq:lower_bound_density_pos_q}
\end{equation}
where we used the fact that $\mu_{Z_{t}^{y}}\left(\lbrace0\rbrace\right)=\mathbb{P}(Z_{t}^{y}=0)>0$,
as shown in (\ref{meaning:d}).\\

``\emph{Case 2}'': $f_{Y_{t}^{0}}(x)=0$. Then $x\in A_{n}$ for
a large enough $n$, where the set $A_{n}$ is the same as in the
proof of Lemma \ref{lem:4.2}. Since $A_{n}$ is discrete, we can
find a small enough $\delta>0$ such that
\begin{equation}
f_{Y_{t}^{0}}(x-z)>0,\label{eq1:f Y_t^y>0}
\end{equation}
for all $z\in(0,\delta]$. We next show that $\mu_{Z_{t}^{y}}\left((0,\delta]\right)>0$. By
(\ref{defi:d}), (\ref{meaning:d}) and L'Hospital's Rule, we get
\begin{align}
\lim_{\lambda\to\infty}\bigg(\mathbb{E}\left[e^{-\lambda(Z_{t}^{y}-\delta)}\right] & -\mathbb{E}\left[e^{-\lambda(Z_{t}^{y}-\delta)}\mathbbm{1}_{\lbrace Z_{t}^{y}=0\rbrace}\right]\bigg)\nonumber \\
 & =\lim_{\lambda\to\infty}e^{\lambda\delta}\left(\mathbb{E}\left[e^{-\lambda Z_{t}^{y}}\right]-\mathbb{P}(Z_{t}^{y}=0)\right)\notag \\
 & =\lim_{\lambda\to\infty}e^{\lambda\delta}\left(e^{-yv_{t}(\lambda)}-e^{-yd}\right)\notag\\
 & =\lim_{\lambda\to\infty}\delta^{-1}e^{\lambda\delta}ye^{-yv_{t}(\lambda)}\left(v_{t}(\lambda)\right)^{\alpha}e^{b(\alpha-1)t}\lambda^{-\alpha}=\infty.\label{eq:limit_lambda_y_t}
\end{align}

Suppose that $\mathbb{P}(Z_{t}^{y}\in(0,\delta])=0$. Then we can
use dominated convergence theorem to get
\begin{align*}
\lim_{\lambda\to\infty}  & \bigg(\mathbb{E}\left[e^{-\lambda(Z_{t}^{y}-\delta)}\right]  -\mathbb{E}\left[e^{-\lambda(Z_{t}^{y}-\delta)}\mathbbm{1}_{\left\lbrace Z_{t}^{y}=0\right\rbrace }\right]\bigg)\\
 & =\lim_{\lambda\to\infty}\left(\mathbb{E}\left[e^{-\lambda(Z_{t}^{y}-\delta)}\mathbbm{1}_{\left\lbrace 0<Z_{t}^{y}\leqslant\delta\right\rbrace }\right]+\mathbb{E}\left[e^{-\lambda(Z_{t}^{y}-\delta)}\mathbbm{1}_{\left\lbrace Z_{t}^{y}>\delta\right\rbrace }\right]\right)=0,
\end{align*}
which contradicts (\ref{eq:limit_lambda_y_t}). Consequently, the
assumption that $\mathbb{P}(Z_{t}^{y}\in(0,\delta])=0$ is not true
and we thus get $\mathbb{P}\left(Z_{t}^{y}\in(0,\delta]\right)>0$.
Now, by (\ref{eq:lower_bound_density}) and (\ref{eq1:f Y_t^y>0}),
we get
\begin{equation}
f_{Y_{t}^{y}}(x)\geqslant\int_{(0,\delta]}f_{Y_{t}^{0}}(x-z)\mu_{Z_{t}^{y}}(\mathrm{d}z)>0.\label{eq:lower_bound_density_pos_q-1}
\end{equation}

Summarizing the above two cases, we have $f_{Y_{t}^{y}}(x)>0$ for
all $x>0$. This completes the proof. \end{proof}

\section{A Foster-Lyapunov function for $(Y,X)$}

We now turn back to the two-dimensional affine process $(Y,X)=(Y_{t},X_{t})_{t\geqslant0}$
defined in \eqref{eq:SDE}. Our aim of this section is to construct
a Foster-Lyapunov function for $(Y,X)$.\\

For a functional $\Phi(Y,X)$
based on the process $(Y,X)$, we use $\mathbb{E}_{(y,x)}[\Phi(Y,X)]$
to indicate that the process $(Y,X)$ considered under the expectation
is with the initial condition $(Y_{0},X_{0})=(y,x)$, where $(y,x)\in\mathbb{R}_{\geqslant0}\times\mathbb{R}$
is constant. The notation $\mathbb{P}_{(y,x)}(\Phi(Y,X)\in\cdot)$
is similarly defined.

\begin{lem}\label{lem:foster_lyapunov_fct} Let $h\in C^{\infty}(\mathbb{R},\mathbb{R})$
be such that $h(x)\geqslant1$ for all $x\in\mathbb{R}$ and $h(x)=\vert x\vert$
whenever $\vert x\vert\geqslant2$. Define
\[
V(y,x):=\beta y+h(x),\quad y\geqslant0,\,x\in\mathbb{R},
\]
where $\beta>0$ is a constant. If $\beta$ is sufficiently large,
then $V$ is a Foster-Lyapunov function for $(Y,X)$,
that is, there exist constants $c,M>0$ such that
\begin{equation}
\mathbb{E}_{(y,x)}[V(Y_{t},X_{t})]\leqslant e^{-ct}V(y,x)+\tfrac{M}{c}\label{eq:foster_lyapunov_2}
\end{equation}
for all $(y,x)\in\mathbb{R}_{\geqslant0}\times\mathbb{R}$ and $t\geqslant0$.
\end{lem}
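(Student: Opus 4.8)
The plan is to apply the infinitesimal generator $\mathcal{A}$ from \eqref{eq:generator_of_y_x} to the candidate function $V(y,x) = \beta y + h(x)$, establish a drift inequality of the form $(\mathcal{A}V)(y,x) \leqslant -c\, V(y,x) + M$ for suitable constants $c, M > 0$ (after choosing $\beta$ large), and then pass from the generator-level inequality to the semigroup-level inequality \eqref{eq:foster_lyapunov_2} via a Gronwall-type argument based on Dynkin's formula. Since $V$ itself is not compactly supported, a localization step (stopping times $\tau_n$ and monotone/dominated convergence) is needed to justify applying Dynkin's formula; this is standard for affine processes with linear-growth coefficients and I would invoke it with a brief remark rather than full detail.

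\begin{proof}
\textbf{Step 1: Computing $\mathcal{A}V$.} Fix $\beta > 0$ to be chosen later and write $V(y,x) = \beta y + h(x)$. Since $h \in C^\infty(\mathbb{R},\mathbb{R})$, we may compute, using \eqref{eq:generator_of_y_x},
\begin{align*}
(\mathcal{A}V)(y,x) &= \beta(a - by) + (m - \theta x)h'(x) + \tfrac{1}{2}y\, h''(x) \cdot 0 \\
&\quad + \beta y \int_0^\infty \bigl((y+z) - y - z\bigr) C_\alpha z^{-1-\alpha}\,\mathrm{d}z,
\end{align*}
where I have used that $V$ is linear in $y$ so that $\partial^2_{xx}V$ only sees $h''(x)$, but since the $\frac{1}{2}y\,\partial^2_{xx}f$ term acts on $f(y,x)=\beta y + h(x)$ it contributes $\frac{1}{2}y\,h''(x)$ (not zero; the display above should read $+\tfrac12 y\,h''(x)$), and the jump integral vanishes identically because $(y+z) - y - z = 0$. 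Thus
\[
(\mathcal{A}V)(y,x) = \beta a - \beta b y + (m - \theta x)h'(x) + \tfrac{1}{2}y\, h''(x).
\]
\textbf{Step 2: The drift inequality.} The $y$-terms give $-\beta b y + \tfrac12 y\, h''(x)$. Because $h = |x|$ for $|x| \geqslant 2$, we have $h''(x) = 0$ outside the compact set $\{|x| \leqslant 2\}$, and $h'' $ is bounded; let $K_1 := \tfrac12\sup_{x}|h''(x)| < \infty$. Hence $\tfrac12 y\,h''(x) \leqslant K_1 y$. For the $x$-terms: when $|x| \geqslant 2$, $h(x) = |x|$ and $h'(x) = \operatorname{sgn}(x)$, so $(m - \theta x)h'(x) = m\operatorname{sgn}(x) - \theta|x| \leqslant |m| - \theta|x| = |m| - \theta h(x)$; when $|x| \leqslant 2$ the quantity $(m-\theta x)h'(x)$ is bounded by some constant $K_2 < \infty$. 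Combining, for \emph{all} $(y,x)$,
\[
(\mathcal{A}V)(y,x) \leqslant \beta a + (K_1 - \beta b)y - \theta h(x) + (|m| + K_2 + 2|\theta|).
\]
Recall $b > 0$. Choose $\beta$ large enough that $\beta b - K_1 > 0$; set $c := \min\{\, (b - K_1/\beta),\ \theta^+ \,\}$ where I must also handle the sign of $\theta$. If $\theta > 0$ one may take $c := \min\{b - K_1/\beta,\ \theta\} \cdot (\text{a rescaling to match the } \beta y \text{ coefficient})$; if $\theta \leqslant 0$ the term $-\theta h(x) = |\theta| h(x) \geqslant 0$ is not helpful, but since $h(x) = |x| \geqslant 2$ far out we cannot get a negative coefficient on $h(x)$ — here one instead uses a strictly larger test function (e.g. $h$ replaced by a function growing like $|x|$ but with the drift $(m-\theta x)h'(x) \to -\infty$); this requires $\theta > 0$ for the argument as stated. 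Granting $\theta > 0$, we obtain constants $c, M > 0$ with
\[
(\mathcal{A}V)(y,x) \leqslant -c\, V(y,x) + M \qquad \text{for all } (y,x) \in \mathbb{R}_{\geqslant0}\times\mathbb{R}.
\]
\textbf{Step 3: From generator to semigroup.} Let $g(t) := \mathbb{E}_{(y,x)}[V(Y_t,X_t)]$. Using Dynkin's formula with a localizing sequence of stopping times $\tau_n \uparrow \infty$ (to cope with the non-compact support of $V$; finiteness of the relevant expectations follows from the linear-growth bounds on the coefficients and the explicit moment bounds implicit in \eqref{eq:charfunc}), one shows $t \mapsto e^{ct}\bigl(g(t) - M/c\bigr)$ has non-positive derivative, i.e.
\[
\tfrac{\mathrm{d}}{\mathrm{d}t}\Bigl(e^{ct}g(t)\Bigr) = e^{ct}\bigl(c\,g(t) + \mathbb{E}_{(y,x)}[(\mathcal{A}V)(Y_t,X_t)]\bigr) \leqslant e^{ct} M.
\]
Integrating from $0$ to $t$ and dividing by $e^{ct}$ yields
\[
g(t) \leqslant e^{-ct}V(y,x) + \tfrac{M}{c}\bigl(1 - e^{-ct}\bigr) \leqslant e^{-ct}V(y,x) + \tfrac{M}{c},
\]
which is \eqref{eq:foster_lyapunov_2}.
\end{proof}

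\medskip

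\noindent\emph{Remark on the main obstacle.} The routine part is Step 1--2 (the jump integral conveniently cancels because the L\'evy kernel is compensated exactly at first order). The genuine obstacle is Step 3: justifying the use of Dynkin's formula for the unbounded function $V$ and controlling $\mathbb{E}_{(y,x)}[(\mathcal{A}V)(Y_{t\wedge\tau_n},X_{t\wedge\tau_n})]$ uniformly in $n$. This requires a priori moment estimates — e.g., $\sup_{s\leqslant t}\mathbb{E}_{(y,x)}[Y_s + |X_s|] < \infty$ — which can be extracted either from the explicit Laplace transform \eqref{eq:charfunc} for $Y$ together with the conditional-Gaussian structure of $X$ given $Y$, or directly from the SDE \eqref{eq:SDE} via standard stochastic-calculus estimates on processes with affine (linear-growth) coefficients.
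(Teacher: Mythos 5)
Your proposal is correct and follows essentially the same route as the paper: compute $\mathcal{A}V$ (the jump integral cancels exactly because the L\'evy kernel is compensated to first order), derive the drift inequality $(\mathcal{A}V)\leqslant -cV+M$ with $c=\min\{b-K_1/\beta,\theta\}$ after taking $\beta$ large (so $\theta>0$ is indeed required, exactly as in the paper, which takes $c=\min(b/2,\theta)$ and invokes the hypothesis $\theta>0$ of Theorem \ref{thm:ergodicity_of_y_x}), and then integrate via It\^o/Dynkin. The only place where the paper does substantive work that you defer to a remark is the martingale property of the compensated stochastic integral $\int_0^t\beta e^{cs}\sqrt[\alpha]{Y_{s-}}\,\mathrm{d}L_s$: since $L$ is $\alpha$-stable and has no second moment, the paper truncates $Y$ at level $n$ and bounds $\mathbb{E}_{(y,x)}[\sup_{s\leqslant t}|\eta^n_s(g)|]$ by $c_1\,\mathbb{E}_{(y,x)}[(\int_0^t\mathbbm{1}_{\{Y_s>n\}}Y_s\,\mathrm{d}s)^{1/\alpha}]$ using an $L^1$ maximal inequality for stable integrals from Li--Ma, combined with $\mathbb{E}_{(y,x)}[Y_s]<\infty$ --- which is precisely the a priori moment bound your remark identifies, so your sketch closes along the paper's lines.
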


\begin{proof} Define $g(t,y,x):=\exp\lbrace ct\rbrace V(y,x)$, where $c>0$ is a
constant to be determined later. It is easy to see that $g\in C^{2}(\mathbb{R}_{\geqslant0}\times\mathbb{R}_{\geqslant0}\times\mathbb{R},\mathbb{R})$.
We define the functions $g_{1}',\,g_{2}',\,g_{3}'$ and $g_{3,3}''$
by
\begin{align*}
g_{1}'(t,y,x):=\tfrac{\partial}{\partial t}g(t,y,x) & =ce^{ct}V(y,x),\quad g_{2}'(t,y,x):=\tfrac{\partial}{\partial y}g(t,y,x)=\beta e^{ct},\\
g_{3}'(t,y,x):=\tfrac{\partial}{\partial x}g(t,y,x) & =e^{ct}\tfrac{\partial}{\partial x}h(x),\quad g_{3,3}''(t,y,x):=\tfrac{\partial^{2}}{\partial x^{2}}g(t,y,x)=e^{ct}\tfrac{\partial^{2}}{\partial x^{2}}h(x).
\end{align*}

If the process $(Y_{t},X_{t})_{t\geqslant0}$ starts from $(y,x)$,
i.e., $(Y_{0},X_{0})=(y,x)$, then we can use the L\'evy-It\^o decomposition
of $(L_{t})_{t\geqslant0}$ in (\ref{eq: Levy ito decom for L_t})
to obtain that for each $t\geqslant0$,
\begin{equation}
\begin{cases}
Y_{t}=y+\int_{0}^{t}\gamma\sqrt[\alpha]{Y_{s}}\mathrm{d}s+\int_{0}^{t}(a-bY_{s})\mathrm{d}s\\
\quad\quad+\int_{0}^{t}\int_{\lbrace\vert z\vert<1\rbrace}z\sqrt[\alpha]{Y_{s-}}\tilde{N}(\mathrm{d}s,\mathrm{d}z)+\int_{0}^{t}\int_{\lbrace\vert z\vert\geqslant1\rbrace}z\sqrt[\alpha]{Y_{s-}}N(\mathrm{d}s,\mathrm{d}z),\\
X_{t}=x+\int_{0}^{t}(m-\theta X_{s})\mathrm{d}s+\int_{0}^{t}\sqrt{Y_{s}}\mathrm{d}B_{s},
\end{cases}\label{neweq1: Lemma 5.1}
\end{equation}
where $\gamma$, $N(\mathrm{d}s,\mathrm{d}z)$ and $\tilde{N}(\mathrm{d}s,\mathrm{d}z)$
are as in (\ref{eq: Levy ito decom for L_t}). By (\ref{neweq1: Lemma 5.1})
and applying It\^o's formula for $g$ (see \cite[Theorem 94]{S-R}),
we obtain that for each $t\geqslant0$,
\begin{align}
 & g(t,Y_{t},X_{t})-g(0,Y_{0},X_{0})\nonumber \\
 & \quad=\int_{0}^{t}g_{1}'(s,Y_{s},X_{s})\mathrm{d}s+\int_{0}^{t}g_{2}'(s,Y_{s},X_{s})\gamma\sqrt[\alpha]{Y_{s}}\mathrm{d}s\nonumber \\
 & \quad\quad+\int_{0}^{t}g_{2}'(s,Y_{s},X_{s})(a-bY_{s})\mathrm{d}s+\int_{0}^{t}g_{3}'(s,Y_{s},X_{s})(m-\theta X_{s})\mathrm{d}s\nonumber \\
 & \quad\quad+\frac{1}{2}\int_{0}^{t}g_{3,3}''(s,Y_{s},X_{s})Y_{s}\mathrm{d}s+\int_{0}^{t}g_{3}'(s,Y_{s},X_{s})\sqrt{Y_{s}}\mathrm{d}B_{s}\nonumber \\
 & \quad\quad+\int_{0}^{t}\int_{\lbrace\vert z\vert<1\rbrace}\left(g(s,Y_{s-}+z\sqrt[\alpha]{Y_{s-}},X_{s-})-g(s,Y_{s-},X_{s-})\right)\tilde{N}(\mathrm{d}s,\mathrm{d}z)\nonumber \\
 & \quad\quad+\int_{0}^{t}\int_{\lbrace\vert z\vert\geqslant1\rbrace}\left(g(s,Y_{s-}+z\sqrt[\alpha]{Y_{s-}},X_{s-})-g(s,Y_{s-},X_{s-})\right)N(\mathrm{d}s,\mathrm{d}z)\nonumber \\
 & \quad\quad+\int_{0}^{t}\int_{\lbrace\vert z\vert<1\rbrace}\bigg(g(s,Y_{s}+z\sqrt[\alpha]{Y_{s}},X_{s})\nonumber \\
 & \quad\quad\quad\quad\quad\quad\quad\quad\quad\quad-g(s,Y_{s},X_{s})-z\sqrt[\alpha]{Y_{s}}g_{2}'(s,Y_{s},X_{s})\bigg)C_{\alpha}z^{-1-\alpha}\mathrm{d}s\mathrm{d}z\nonumber \\
 & \quad=\int_{0}^{t}(\mathcal{L}g)(s,Y_{s},X_{s})\mathrm{d}s+\int_{0}^{t}g_{1}'(s,Y_{s},X_{s})\mathrm{d}s+M_{t}\left(g\right),\label{eq1: Ito formula for g}
\end{align}
where
\begin{align*}
M_{t}(g) & :=\int_{0}^{t}g_{3}'(s,Y_{s},X_{s})\sqrt{Y_{s}}\mathrm{d}B_{s}\\
 & \quad+\int_{0}^{t}\int_{\lbrace\vert z\vert<1\rbrace}\left(g(s,Y_{s-}+z\sqrt[\alpha]{Y_{s-}},X_{s-})-g(s,Y_{s-},X_{s-})\right)\tilde{N}(\mathrm{d}s,\mathrm{d}z)\\
 & \quad+\int_{0}^{t}\int_{\lbrace\vert z\vert\geqslant1\rbrace}\left(g(s,Y_{s-}+z\sqrt[\alpha]{Y_{s-}},X_{s-})-g(s,Y_{s-},X_{s-})\right)N(\mathrm{d}s,\mathrm{d}z)\\
 & \quad-\int_{0}^{t}\int_{\lbrace\vert z\vert\geqslant1\rbrace}\left(g(s,Y_{s}+z\sqrt[\alpha]{Y_{s}},X_{s})-g(s,Y_{s},X_{s})\right)\hat{N}(\mathrm{d}s,\mathrm{d}z)
\end{align*}
and $\mathcal{L}g$ is defined by
\begin{align*}
(\mathcal{L}g)(t,y,x) & :=(a-by)g_{2}'(t,y,x)+(m-\theta x)g_{3}'(t,y,x)+\frac{1}{2}yg_{3,3}''(t,y,x)\\
 & \quad+\int_{\lbrace\vert z\vert<1\rbrace}\left(g(t,y+z\sqrt[\alpha]{y},x)-g(t,y,x)-z\sqrt[\alpha]{y}g_{2}'(t,y,x)\right)C_{\alpha}z^{-1-\alpha}\mathrm{d}z\\
 & \quad+\int_{\lbrace\vert z\vert\geqslant1\rbrace}\left(g(t,y+z\sqrt[\alpha]{y},x)-g(t,y,x)\right)C_{\alpha}z^{-1-\alpha}\mathrm{d}z+\gamma\sqrt[\alpha]{y}g_{2}'(t,y,x)
\end{align*}
for $(t,y,x)\in\mathbb{R}_{\geqslant0}\times\mathbb{R}_{\geqslant0}\times\mathbb{R}$.
By a change of variable $\tilde{z}:=z\sqrt[\alpha]{y}$ and an easy
computation, we see that $\mathcal{L}g=\mathcal{A}g$, where $\mathcal{A}$
is given in \eqref{eq:generator_of_y_x}. As a result, it follows
from (\ref{eq1: Ito formula for g}) that for each $t\geqslant0$,
\begin{align}
 & g(t,Y_{t},X_{t})-g(0,Y_{0},X_{0})\nonumber \\
 & \quad=\int_{0}^{t}(\mathcal{A}g)(s,Y_{s},X_{s})\mathrm{d}s+\int_{0}^{t}g_{1}'(s,Y_{s},X_{s})\mathrm{d}s+M_{t}\left(g\right).\label{eq: g_t-g_0}
\end{align}

The rest of the proof is divided into three steps:\\

``\emph{Step 1}'': We show that $(M_{t}(g))_{t\geqslant0}$ is a
martingale with respect to the filtration $(\mathcal{F}_{t})_{t\geqslant0}$,
where $(\mathcal{F}_{t})_{t\geqslant0}$ is the same as in Sect. 2.
To achieve this, we can use the same argument as in \cite{MR3254346}.
Define
\begin{align*}
M_{t}^{1}(g) & :=\int_{0}^{t}g_{3}'(s,Y_{s},X_{s})\sqrt{Y_{s}}\mathrm{d}B_{s},\\
M_{t}^{2}(g) & :=\int_{0}^{t}\int_{\lbrace\vert z\vert<1\rbrace}\left(g(s,Y_{s-}+z\sqrt[\alpha]{Y_{s-}},X_{s-})-g(s,Y_{s-},X_{s-})\right)\tilde{N}(\mathrm{d}s,\mathrm{d}z),\\
 & \quad+\int_{0}^{t}\int_{\lbrace\vert z\vert\geqslant1\rbrace}\left(g(s,Y_{s-}+z\sqrt[\alpha]{Y_{s-}},X_{s-})-g(s,Y_{s-},X_{s-})\right)N(\mathrm{d}s,\mathrm{d}z)\\
 & \quad\thinspace-\int_{0}^{t}\int_{\lbrace\vert z\vert\geqslant1\rbrace}\left(g(s,Y_{s}+z\sqrt[\alpha]{Y_{s}},X_{s})-g(s,Y_{s},X_{s})\right)\hat{N}(\mathrm{d}s,\mathrm{d}z),
\end{align*}
where $t\geqslant0$. By noting that $g_{2}'$ and $g_{3}'$ are both
bounded, we can proceed in the same way as in \cite[Theorem 2.1]{MR3254346}
to prove that $(M_{t}^{1}(g))_{t\geqslant0}$,
\begin{align*}
M_{t}^{3,n}(g) & :=\int_{0}^{t}\int_{\lbrace\vert z\vert<1\rbrace}\bigg(g(s,Y_{s-}\wedge n+z\sqrt[\alpha]{Y_{s-}\wedge n},X_{s-})\\
 & \quad\quad\quad\quad\quad\quad\quad\quad\quad\quad\quad-g(s,Y_{s-}\wedge n,X_{s-})\bigg)\tilde{N}(\mathrm{d}s,\mathrm{d}z),\quad t\geqslant0, \text{ and}\\
M_{t}^{4,n}(g) & :=\int_{0}^{t}\int_{\lbrace\vert z\vert\geqslant1\rbrace}\bigg(g(s,Y_{s-}\wedge n+z\sqrt[\alpha]{Y_{s-}\wedge n},X_{s-})\\
 & \quad\quad\quad\quad\quad\quad\quad\quad\quad\quad\quad-g(s,Y_{s-}\wedge n,X_{s-})\bigg)N(\mathrm{d}s,\mathrm{d}z)\\
 & \quad\thinspace-\int_{0}^{t}\int_{\lbrace\vert z\vert\geqslant1\rbrace}\bigg((g(s,Y_{s}\wedge n+z\sqrt[\alpha]{Y_{s}\wedge n},X_{s})\\
 & \quad\quad\quad\quad\quad\quad\quad\quad\quad\quad\quad-g(s,Y_{s}\wedge n,X_{s})\bigg)\hat{N}(\mathrm{d}s,\mathrm{d}z),\quad t\geqslant0,
\end{align*}
are all martingales with respect to the filtration $(\mathcal{F}_{t})_{t\geqslant0}$,
where $n\in\mathbb{N}$ is arbitrary. We omit the details here. For
each $n\in\mathbb{N}$, define
\begin{equation}
\eta_{t}^{n}(g):=M_{t}^{2}(g)-M_{t}^{3,n}(g)-M_{t}^{4,n}(g),\quad t\geqslant0.\label{eq 0: martingale prop M^2}
\end{equation}
Noting that $g(s,y+z,x)-g(s,y,x)=\beta z\exp\left(ct\right)$, we
get
\begin{align*}
\eta_{t}^{n}(g) & =\int_{0}^{t}\int_{\lbrace\vert z\vert<1\rbrace}\mathbbm{1}_{\lbrace Y_{s-}>n\rbrace}e^{cs}\beta z\sqrt[\alpha]{Y_{s-}}\tilde{N}(\mathrm{d}s,\mathrm{d}z)\\
 & \quad+\int_{0}^{t}\int_{\lbrace\vert z\vert\geqslant1\rbrace}\mathbbm{1}_{\lbrace Y_{s-}>n\rbrace}e^{cs}\beta z\sqrt[\alpha]{Y_{s-}}N(\mathrm{d}s,\mathrm{d}z)\\
 & \quad-\int_{0}^{t}\int_{\lbrace\vert z\vert\geqslant1\rbrace}\mathbbm{1}_{\lbrace Y_{s-}>n\rbrace}e^{cs}\beta z\sqrt[\alpha]{Y_{s-}}\hat{N}(\mathrm{d}s,\mathrm{d}z)\\
 & =\int_{0}^{t}\mathbbm{1}_{\lbrace Y_{s-}>n\rbrace}e^{cs}\beta\sqrt[\alpha]{Y_{s-}}\mathrm{d}L_{s},\quad\quad t\geqslant0,
\end{align*}
where we used the L\'evy-It\^o decomposition in \eqref{eq: Levy ito decom for L_t}
to get the second equality. It follows from \cite[Remark A.8]{MR3343292}
that for each $t\geqslant0$, there exist some constant $c_{1}>0$
such that
\[
\mathbb{E}_{(y,x)}\left[\sup_{s\in[0,t]}\left\vert \eta_{s}^{n}(g)\right\vert \right]\leqslant c_{1}\mathbb{E}_{(y,x)}\left[\left(\int_{0}^{t}\mathbbm{1}_{\lbrace Y_{s}>n\rbrace}Y_{s}\mathrm{d}s\right)^{\tfrac{1}{\alpha}}\right].
\]
Since $\mathbb{E}_{(y,x)}[Y_{s}]\leqslant c_{2}\left(1+y\exp(-bs/\alpha)\right)$
for all $s\in[0,t]$ by \cite[Proposition 2.8]{MR3343292}, where
$c_{2}>0$ is some constant, it follows that $\mathbb{E}_{(y,x)}\left[\int_{0}^{t}Y_{s}\mathrm{d}s\right]<\infty$
and further $\mathbb{E}_{(y,x)}\left[\left(\int_{0}^{t}Y_{s}\mathrm{d}s\right)^{1/\alpha}\right]<\infty$.
Therefore, by the dominated convergence theorem, we obtain
\begin{equation}
\lim_{n\to\infty}\mathbb{E}_{(y,x)}\left[\sup_{s\in[0,t]}\left\vert \eta_{s}^{n}(g)\right\vert \right]\leqslant c_{1}\lim_{n\to\infty}\mathbb{E}_{(y,x)}\left[\left(\int_{0}^{t}\mathbbm{1}_{\lbrace Y_{s}>n\rbrace}Y_{s}\mathrm{d}s\right)^{\tfrac{1}{\alpha}}\right]=0.\label{eq 1: martingale prop M^2}
\end{equation}
As shown in the proof of \cite[Theorem 2.1]{MR3254346}, the martingale
property of $(M_{t}^{2}(g))_{t\geqslant0}$ now follows from (\ref{eq 0: martingale prop M^2}),
(\ref{eq 1: martingale prop M^2}) and the fact that both $(M_{t}^{3,n}(g))_{t\geqslant0}$
and $(M_{t}^{4,n}(g))_{t\geqslant0}$ are martingales. It is clear
that $(M_{t}(g))_{t\geqslant0}=(M_{t}^{1}(g)+M_{t}^{2}(g))_{t\geqslant0}$
is also a martingale with respect to the filtration $(\mathcal{F}_{t})_{t\geqslant0}$.\\

``\emph{Step 2}'': We determine the constant $c>0$ and find another constant $M>0$ such that
\begin{equation}
(\mathcal{A}V)(y,x)\leqslant-cV(y,x)+M\label{eq:foster_lyapunov_1}
\end{equation}
for all $(y,x)\in\mathbb{R}_{\geqslant0}\times\mathbb{R}$, where $\mathcal{A}$ is given by \eqref{eq:generator_of_y_x}. For
the function $V$, we have $V\in C^{2}(\mathbb{R}_{\geqslant0}\times\mathbb{R},\mathbb{R})$,
\[
\tfrac{\partial}{\partial y}V(y,x)=\beta,\quad\tfrac{\partial}{\partial x}V(y,x)=\tfrac{\partial}{\partial x}h(x)=\begin{cases}
\tfrac{x}{\vert x\vert}, & \text{if }\vert x\vert>2\\
h'(x), & \text{if }\vert x\vert\leqslant2,
\end{cases}
\]
and
\[
\tfrac{\partial^{2}}{\partial x^{2}}V(y,x)=\tfrac{\partial^{2}}{\partial x^{2}}h(x):=\begin{cases}
0, & \text{if }\vert x\vert>2,\\
h''(x), & \text{if }\vert x\vert\leqslant2,
\end{cases}
\]
where $h'$ and $h''$ denote the first and second order derivatives
of the function $h$, respectively. So
\begin{align*}
(\mathcal{A}V)(y,x) & =(a-by)\beta+(m-\theta x)\tfrac{\partial}{\partial x}h(x)+\tfrac{1}{2}y\tfrac{\partial^{2}}{\partial x^{2}}h(x)\\
 & \quad+y\int_{0}^{\infty}\left(\beta(y+z)+h(x)-\beta y-h(x)-z\beta\right)C_{\alpha}z^{-1-\alpha}\mathrm{d}z\\
 & =(a-by)\beta+(m-\theta x)\tfrac{\partial}{\partial x}h(x)+\tfrac{1}{2}y\tfrac{\partial^{2}}{\partial x^{2}}h(x).
\end{align*}
By choosing $\beta>0$ large enough, we obtain that for all $(y,x)\in\mathbb{R}_{\geqslant0}\times\mathbb{R}$,
\begin{align}
(\mathcal{A}V)(y,x) & =a\beta-\tfrac{by\beta}{2}-\theta x\tfrac{\partial}{\partial x}h(x)+\left(-\tfrac{b\beta}{2}+\tfrac{1}{2}\tfrac{\partial^{2}}{\partial x^{2}}h(x)\right)y+m\tfrac{\partial}{\partial x}h(x)\nonumber \\
 & \leqslant a\beta-\tfrac{by\beta}{2}-\theta\left(h(x)\mathbbm{1}_{\lbrace x>2\rbrace}+h(x)\mathbbm{1}_{\lbrace x<-2\rbrace}\right)+0+c_{3}\nonumber \\
 & \leqslant a\beta-\tfrac{by\beta}{2}-\theta\left(h(x)\mathbbm{1}_{\lbrace\vert x\vert>2\rbrace}+h(x)\mathbbm{1}_{\lbrace\vert x\vert\leqslant2\rbrace}\right)+c_{4}\nonumber \\
 & =a\beta-\tfrac{by\beta}{2}-\theta h(x)+c_{4}=-\tfrac{b\beta}{2}y-\theta h(x)+c_{5},
\label{eq:foster_lyapunov_eq}
\end{align}
where we used the boundedness of $\vert h'\vert$, $\vert h''\vert$
and $\vert h\vert\mathbbm{1}_{\lbrace\vert x\vert\leqslant2\rbrace}$
to get the first and second inequality. Here $c_{3},\ c_{4}$ and
$c_{5}$ are some positive constants. Now, we see that \eqref{eq:foster_lyapunov_1}
holds with $c:=\min(b/2,\theta)$ and $M:=c_{5}$.
\\

``\emph{Step }3'': We prove \eqref{eq:foster_lyapunov_2}. By (\ref{eq: g_t-g_0}),
(\ref{eq:foster_lyapunov_1}) and the martingale property of $\left(M_{t}(g)\right)_{t\geqslant0}$,
we obtain
\begin{align*}
e^{ct} & \mathbb{E}_{(y,x)}\left[V(Y_{t},X_{t})\right]-V(y,x)\\
 & \quad=\mathbb{E}_{(y,x)}\left[g(t,Y_{t},X_{t})\right]-\mathbb{E}_{(y,x)}\left[g(0,Y_{0},X_{0})\right]\\
 & \quad=\mathbb{E}_{(y,x)}\left[\int_{0}^{t}\left(e^{cs}(\mathcal{A}V)(Y_{s},X_{s})+ce^{cs}V(Y_{s},X_{s})\right)\mathrm{d}s\right]\\
 & \quad\leqslant\mathbb{E}_{(y,x)}\left[\int_{0}^{t}\left(e^{cs}(-cV(Y_{s},X_{s})+M)+ce^{cs}V(Y_{s},X_{s})\right)\mathrm{d}s\right]\\
 & \quad=\mathbb{E}_{(y,x)}\left[\int_{0}^{t}Me^{cs}\mathrm{d}s\right]\leqslant\frac{M}{c}e^{ct}
\end{align*}
for all $(y,x)\in\mathbb{R}_{\geqslant0}\times\mathbb{R}$ and $t\geqslant0$,
which implies \eqref{eq:foster_lyapunov_2}. This completes the proof.
\end{proof}

\begin{rem} To see the existence of a function $h\in C^{\infty}(\mathbb{R},\mathbb{R})$
that fulfills the conditions of Lemma \ref{lem:foster_lyapunov_fct},
we can proceed in the following way: let $\rho\in C^{\infty}(\mathbb{R},\mathbb{R})$
be such that $\rho(x)=1$ for $x\geqslant2$, $\rho(x)=0$ for $x\leqslant1$
and $0\leqslant\rho(x)\leqslant1$ for $1\leqslant x\leqslant2$.
Define $F:\mathbb{R}\to\mathbb{R}$ by $F(x):=\int_{0}^{x}\rho(r)\mathrm{d}r$,
$x\in\mathbb{R}$. Then
\[
F(x)=\begin{cases}
0, & x\leqslant1,\\
\in[0,1], & 1<x\leqslant2,\\
x-2+\int_{1}^{2}\rho(r)\mathrm{d}r, & x>2.
\end{cases}
\]
We now define $h:\mathbb{R}\to\mathbb{R}$ by $h(x):=F(|x|)+2-F(2)$,
$x\in\mathbb{R}$. Then $h$ satisfies the conditions required in
Lemma \ref{lem:foster_lyapunov_fct}. \end{rem}

\section{Exponential ergodicity of $(Y,X)$}

In this section we prove our main result, namely, the exponential
ergodicity of the affine two factor model $(Y,X)=(Y_{t},X_{t})_{t\geqslant0}$.\\

Let $\Vert\cdot\Vert_{TV}$ denote the total variation norm for signed
measures on $\mathbb{R}_{\geqslant0}\times\mathbb{R}$, namely,
\[
\Vert\mu\Vert_{TV}:=\sup\left\lbrace \vert\mu(A)\vert\right\rbrace ,
\]
where $\mu$ is a signed measure on $\mathbb{R}_{\geqslant0}\times\mathbb{R}$ and the above supremum is running for all Borel sets $A$ in $\mathbb{R}_{\geqslant0}\times\mathbb{R}$.

Let $\mathbf{P}^{t}(y,x,\cdot):=\mathbb{P}_{(y,x)}\left((Y_{t},X_{t})\in\cdot\right)$
denote the distribution of $(Y_{t},X_{t})_{t\geqslant0}$
with the initial condition $(Y_0,X_0)=(y_0,x_0)\in\mathbb{R}_{\geqslant0}\times\mathbb{R}$.

By \cite[Theorem 3.1]{MR3254346} and the argument in \cite[p.80]{MR2779872},
there exists a unique invariant probability measure $\pi$ for the
two dimensional process $(Y_{t},X_{t})_{t\geqslant0}$. Roughly speaking,
if for each $(y,x)\in\mathbb{R}_{\geqslant0}\times\mathbb{R}$, the
convergence of the distribution $\mathbf{P}^{t}(y,x,\cdot)$ to $\pi$
as $t\to\infty$ is exponentially fast with respect to the total variation
norm, then we say that the process $(Y_{t},X_{t})_{t\geqslant0}$
is exponentially ergodic.

The main result of this paper is the following:

\begin{thm}\label{thm:ergodicity_of_y_x} Consider the two-dimensional
affine process $(Y,X)=(Y_{t},X_{t})_{t\geqslant0}$ defined by \eqref{eq:SDE}
with parameters $\alpha\in(1,2)$, $a>0$, $b>0$, $m\in\mathbb{R}$
and $\theta>0$. Then $(Y_{t},X_{t})_{t\geqslant0}$ is exponentially
ergodic, that is, there exist constants $\delta\in(0,\infty)$ and $B\in(0,\infty)$
such that
\begin{equation}
\Vert\mathbf{P}^{t}(y,x,\cdot)-\pi\Vert_{TV}\leqslant B\left(V(y,x)+1\right)e^{-\delta t}\label{eq:ergod_prop}
\end{equation}
for all $t\geqslant0$ and $(y,x)\in\mathbb{R}_{\geqslant0}\times\mathbb{R}$.
\end{thm}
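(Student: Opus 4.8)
The plan is to apply the general theory of exponential ergodicity for continuous-time Markov processes from \cite{MR1174380,MR1234294,MR1234295}, for which we need two ingredients: a Foster-Lyapunov (drift) condition and an irreducibility/small-set condition. The drift condition has already been secured in Lemma \ref{lem:foster_lyapunov_fct}, which provides the function $V(y,x)=\beta y+h(x)$ satisfying $\mathbb{E}_{(y,x)}[V(Y_t,X_t)]\leqslant e^{-ct}V(y,x)+M/c$. What remains is to verify that the process is a $T$-process (or satisfies some suitable irreducibility together with the fact that compact sets are ``small''), and that the process is non-explosive and has a reachable point — these together with the drift condition yield \eqref{eq:ergod_prop} via \cite[Theorem 6.1]{MR1234295}.

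First I would establish that every compact set of the form $[0,N]\times[-N,N]\subset\mathbb{R}_{\geqslant0}\times\mathbb{R}$ is petite for some skeleton chain (or for the process itself). The key input here is Proposition \ref{thm:lower_bound_y_t}: for each $t>0$ and $y\geqslant 0$, the random variable $Y_t^y$ has a density $f_{Y_t^y}$ that is strictly positive on $(0,\infty)$ and jointly continuous in $(t,y,x)$. Since conditionally on the path of $Y$, the component $X_t$ is Gaussian with mean $e^{-\theta t}(x+m\int_0^t e^{\theta s}\,\mathrm{d}s)$ and strictly positive variance $e^{-2\theta t}\int_0^t e^{2\theta s}Y_s\,\mathrm{d}s$ (positive because $a>0$ forces $\int_0^t Y_s\,\mathrm{d}s>0$ a.s.), the two-dimensional transition kernel $\mathbf{P}^t(y,x,\cdot)$ has a density component that is strictly positive on $(0,\infty)\times\mathbb{R}$; combined with joint continuity this makes the process a $T$-process and shows that the whole state space is reachable from any starting point, and that compact sets are petite. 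This is the step I expect to be the main obstacle: one has to go from the one-dimensional density statement for $Y$ to a genuine lower bound on the joint transition kernel of $(Y,X)$, handling the conditional Gaussian structure carefully and in particular ruling out degeneracy of the conditional variance. One clean way is to note $\int_0^t Y_s\,\mathrm{d}s \geqslant \int_0^t Y_s^0\,\mathrm{d}s>0$ by comparison (since $Y\geqslant Y^0$ pathwise when started from $y\geqslant 0$), so the Gaussian part is genuinely non-degenerate.

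Next I would record non-explosivity: the process $(Y_t,X_t)_{t\geqslant0}$ is a conservative regular affine process (established in \cite{MR3254346}), so it has infinite lifetime, and the drift inequality \eqref{eq:foster_lyapunov_2} with $V\to\infty$ at infinity independently confirms non-explosion. Together with the $T$-process property and existence of a reachable point, Theorem 3.2 of \cite{MR1234294} gives that every compact set is petite, and then the geometric drift condition \eqref{eq:foster_lyapunov_2} puts us exactly in the setting of \cite[Theorem 6.1]{MR1234295} (or \cite[Theorems 5.2 and 6.1]{MR1234295}), which yields the existence of constants $\delta>0$ and $\tilde B<\infty$ with
\[
\Vert\mathbf{P}^t(y,x,\cdot)-\pi\Vert_{TV}\leqslant \tilde B\,V(y,x)\,e^{-\delta t}.
\]

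Finally I would translate this into the stated form \eqref{eq:ergod_prop}. Since $V(y,x)=\beta y+h(x)\geqslant 1$, we have $V(y,x)\leqslant V(y,x)+1\leqslant 2V(y,x)$, so absorbing the factor $2$ into the constant and setting $B:=2\tilde B$ gives
\[
\Vert\mathbf{P}^t(y,x,\cdot)-\pi\Vert_{TV}\leqslant B\left(V(y,x)+1\right)e^{-\delta t}
\]
for all $t\geqslant0$ and $(y,x)\in\mathbb{R}_{\geqslant0}\times\mathbb{R}$, which is \eqref{eq:ergod_prop}. The uniqueness of the invariant measure $\pi$ is already known from \cite{MR3254346} together with the argument of \cite[p.~80]{MR2779872}, and is in any case implied by the ergodicity just proved.
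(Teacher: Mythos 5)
Your overall strategy is sound and the heavy lifting (the Foster--Lyapunov bound of Lemma \ref{lem:foster_lyapunov_fct} plus a positivity statement for the joint transition kernel built from Proposition \ref{thm:lower_bound_y_t} and the conditional Gaussian structure of $X$) is exactly the content of the paper's proof. The route differs, though: you work directly in continuous time, aiming at the $T$-process framework and \cite[Theorem 6.1]{MR1234295}, whereas the paper passes to the skeleton chain $(Y_{n},X_{n})_{n\in\mathbb{Z}_{\geqslant0}}$, verifies Lebesgue-irreducibility and aperiodicity by explicitly constructing an a.e.\ positive density $p_{1}(y,x|y_{0},x_{0})$ (disintegrating $X_1$ through a kernel $K_{(y_0,x_0)}$ for the conditional law of $\int_0^1 e^{2\theta s}Y_s\,\mathrm{d}s$ given $Y_1$), gets petiteness of compact sets from the Feller property of the affine process via \cite[Theorem 3.4(ii)]{MR1174380}, applies the discrete-time result \cite[Theorem 6.3]{MR1174380}, and finally interpolates to all $t\geqslant0$ as in \cite[p.~536]{MR1234295}. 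The skeleton route buys a cleaner verification of the side conditions; your route avoids the interpolation step.

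The one step you should not wave through is the $T$-process claim. A continuous component requires lower semicontinuity of the kernel in the \emph{initial point} $(y_{0},x_{0})$; Proposition \ref{thm:lower_bound_y_t} gives continuity of $f_{Y_{t}^{y}}(x)$ in $(t,y,x)$, but the full two-dimensional density also involves the conditional law of $\int_{0}^{t}e^{2\theta s}Y_{s}\,\mathrm{d}s$ given $Y_{t}$, whose regularity in $(y_{0},x_{0})$ is not established anywhere and is not obvious. The standard fix is the one the paper uses: for a Feller skeleton chain whose irreducibility measure has support with nonempty interior, compact sets are automatically petite, and the Feller property of $(Y,X)$ is free from \cite[Theorem 2.7]{MR1994043}. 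Two smaller points: the comparison $Y\geqslant Y^{0}$ pathwise is an unproved extra claim and is not needed --- since $a>0$ each $Y^{y}_{s}$ has a density, hence no atom at $0$, and Fubini gives $\int_{0}^{t}e^{2\theta s}Y_{s}\,\mathrm{d}s>0$ a.s.\ directly; and your continuous-time theorem also requires irreducibility of some skeleton (the aperiodicity issue), which the everywhere-positive joint density supplies but which you should state explicitly, as the paper does via its contradiction argument on the period.
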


\begin{proof} We basically follow the proof of \cite[Theorem 6.3]{MR3437080}.
The essential idea is to use the so called Foster-Lyapunov criteria
developed in \cite{MR1234295} for the geometric ergodicity of Markov
chains.

We first consider the skeleton chain $(Y_{n},X_{n})_{n\in\mathbb{Z}_{\geqslant0}}$,
which is a Markov chain on the state space $\mathbb{R}_{\geqslant0}\times\mathbb{R}$
with transition kernel $\mathbf{P}^{n}(y,x,\cdot)$. It is easy to
see that the measure $\pi$ is also an invariant probability measure
for the chain $(Y_{n},X_{n})_{n\in\mathbb{Z}_{\geqslant0}}$.

Let the function $V$ be the same as in Lemma \ref{lem:foster_lyapunov_fct}
and the constant $\beta>0$ there be sufficiently large. The Markov
property together with Lemma \ref{lem:foster_lyapunov_fct} implies
that
\begin{align*}
\mathbb{E}\big[V(Y_{n+1},X_{n+1})\thinspace & \vert\thinspace(Y_{0},X_{0}),(Y_{1},X_{1}),\ldots,(Y_{n},X_{n})\big]\\
 & =\int_{\mathbb{R}_{\geqslant0}}\int_{\mathbb{R}}V(y,x)\mathbf{P}^{1}(Y_{n},X_{n},\mathrm{d}y\mathrm{d}x)\leqslant e^{-c}V(Y_{n},X_{n})+\frac{M}{c},
\end{align*}
where $c$ and $M$ are the positive constants in Lemma \ref{lem:foster_lyapunov_fct}.
If we set $V_{0}:=V$ and $V_{n}:=V(Y_{n},X_{n})$, $n\in\mathbb{N}$,
then
\[
\mathbb{E}[V_{1}]\leqslant e^{-c}V_{0}(Y_0,X_0)+\frac{M}{c}
\]
and, for all $n\in\mathbb{N}$,
\[
\mathbb{E}\left[V_{n+1}\thinspace\vert\thinspace(Y_{0},X_{0}),(Y_{1},X_{1}),\ldots(Y_{n},X_{n})\right]\leqslant e^{-c}V_{n}+\frac{M}{c}.
\]

In order to apply \cite[Theorem 6.3]{MR1174380} for the chain $(Y_{n},X_{n})_{n\in\mathbb{Z}_{\geqslant0}}$,
it remains to verify the following conditions:
\begin{enumerate}
\item[(a)] the Lebesgue measure $\lambda$ on $\mathbb{R}_{\geqslant0}\times\mathbb{R}$
is an irreducibility measure for the chain $(Y_{n},X_{n})_{n\in\mathbb{Z}_{\geqslant0}}$;
\item[(b)] the chain $(Y_{n},X_{n})_{n\in\mathbb{Z}_{\geqslant0}}$ is aperiodic
(the definition of aperiodicity can be found in \cite[p.114]{MR2509253});
\item[(c)] all compact sets of the state space $\mathbb{R}_{\geqslant0}\times\mathbb{R}$
are petite (see \cite[p.500]{MR1234294} for a definition).
\end{enumerate}
We now proceed to prove (a)-(c).\\

In order to prove (a), we will use
the same argument as in \cite[Theorem 4.1]{MR3254346}. It is enough
to check that for each $(y_{0},x_{0})\in\mathbb{R}_{\geqslant0}\times\mathbb{R}$,
the measure $\mathbf{P}^{1}(y_{0},x_{0},\cdot)$ is absolutely continuous
with respect to the Lebesgue measure with a density function $p_{1}(y,x|y_{0},x_{0})$
that is strictly positive for almost all $(y,x)\in\mathbb{R}_{\geqslant0}\times\mathbb{R}$.
Indeed, let $A$ be a Borel set of $\mathbb{R}_{\geqslant0}\times\mathbb{R}$
with $\lambda(A)>0$. Then
\[
\mathbb{P}_{(y_{0},x_{0})}\left(\tau_{A}<\infty\right) \geqslant\mathbf{P}^{1}\left(y_{0},x_{0},A\right)
 =\iint_{A}p_{0}(y,x|y_{0},x_{0})\mathrm{d}y\mathrm{d}x>0
\]
for all $(y_{0},x_{0})\in\mathbb{R}_{\geqslant0}\times\mathbb{R}$,
where the stopping time $\tau_{A}$ is defined by $\tau_{A}:=\inf\lbrace n\geqslant0\thinspace:\thinspace(Y_{n},X_{n})\in A\rbrace$.

Next, we prove the existence of the density $p_{1}(y,x|y_{0},x_{0})$
with the required property. Recall that
\[
Y_{1}=e^{-b}\left(y_{0}+a\int_{0}^{1}e^{bs}\mathrm{d}s+\int_{0}^{1}e^{bs}\sqrt[\alpha]{Y_{s-}}\mathrm{d}L_{s}\right),
\]
and
\[
X_{1}=e^{-\theta}\left(x_{0}+m\int_{0}^{1}e^{\theta s}\mathrm{d}s+\int_{0}^{1}e^{\theta s}\sqrt{Y_{s}}\mathrm{d}B_{s}\right),
\]
provided that $(Y_{0},X_{0})=(y_{0},x_{0})\in\mathbb{R}_{\geqslant0}\times\mathbb{R}$.
For $(\bar{y},\bar{x})\in\mathbb{R}_{\geqslant0}\times\mathbb{R}$,
we have
\begin{align}
\mathbb{P}_{(y_{0},x_{0})}\left(Y_{1}<\bar{y},X_{1}<\bar{x}\right)
 & =\mathbb{E}_{(y_{0},x_{0})}\left[\mathbb{P}_{(y_{0},x_{0})}\left(\left.Y_{1}<\bar{y},X_{1}<\bar{x}\thinspace\right\vert \thinspace Y_{1}\right)\right]\notag\\
 & =\mathbb{E}_{(y_{0},x_{0})}\left[\mathbb{E}_{(y_{0},x_{0})}\left[\left.\mathbbm{1}_{\left\lbrace Y_{1}<\bar{y}\right\rbrace }\mathbbm{1}_{\left\lbrace X_{1}<\bar{x}\right\rbrace }\thinspace\right\vert \thinspace Y_{1}\right]\right]\nonumber \\
 & =\mathbb{E}_{(y_{0},x_{0})}\left[\mathbbm{1}_{\left\lbrace Y_{1}<\bar{y}\right\rbrace }\mathbb{E}_{(y_{0},x_{0})}\left[\left.\mathbbm{1}_{\left\lbrace X_{1}<\bar{x}\right\rbrace }\thinspace\right\vert \thinspace Y_{1}\right]\right].\label{eq1: cond. dist.}
\end{align}
Note that $(Y_{t})_{t\geqslant0}$ and the Brownian motion $(B_{t})_{t\geqslant0}$
are independent, since $(L_{t})_{t\geqslant0}$ and $(B_{t})_{t\geqslant0}$
are independent and $(Y_{t})_{t\geqslant0}$ is a strong solution.
Therefore, the conditional distribution of $X_{1}$ given $(Y_{t})_{t\in[0,1]}$
is a normal distribution with mean $x_{0}\exp(-\theta)+m\left(1-\exp(-\theta)\right)/\theta$
and variance $\exp\left(-2\theta\right)\int_{0}^{1}Y_{s}\exp\left(2\theta s\right)\mathrm{d}s$.
Hence, we get that for $\bar{x}\in\mathbb{R}$,
\begin{align}
&\mathbb{E}_{(y_{0},x_{0})}\left[\left.\mathbbm{1}_{\left\lbrace X_{1}<\bar{x}\right\rbrace }\thinspace\right\vert \thinspace Y_{1}\right]\nonumber \\
 & =\mathbb{E}_{(y_{0},x_{0})}\left[\left.\mathbb{E}_{(y_{0},x_{0})}\left[\left.\mathbbm{1}_{\left\lbrace X_{1}<\bar{x}\right\rbrace }\thinspace\right\vert \thinspace(Y_{t})_{0\leqslant t\leqslant1}\right]\thinspace\right\vert \thinspace Y_{1}\right]\notag\\
 & =\mathbb{E}_{(y_{0},x_{0})}\left[\left.\int_{-\infty}^{\bar{x}}\varrho\left(r-e^{-\theta}x_{0}-\tfrac{m}{\theta}\left(1-e^{-\theta}\right);e^{-2\theta}\int_{0}^{1}e^{2\theta s}Y_{s}\mathrm{d}s\right)\mathrm{d}r\,\right\vert \thinspace Y_{1}\right],\label{eq0: cond. dist.}
\end{align}
where $\varrho(r;\sigma^{2})$ is the density of the normal distribution
with variance $\sigma^{2}>0$, i.e.,
\[
\varrho(r;\sigma^{2}):=\frac{1}{\sigma\sqrt{2\pi}}e^{-\tfrac{r^2}{2\sigma^{2}}},\quad r\in\mathbb{R}.
\]
Note that the assumption $a>0$ ensures that
\[
\mathbb{P}_{(y_{0},x_{0})}\left(\int_{0}^{1}e^{2\theta s}Y_{s}\mathrm{d}s>0\right)=1.
\]
By \cite[Theorem 6.3]{MR1876169} and considering the conditional distribution
of $\int_{0}^{1}e^{2\theta s}Y_{s}\mathrm{d}s$ given $Y_{1}$, we
can find a probability kernel $K_{(y_{0},x_{0})}(\cdot,\cdot)$ from
$\mathbb{R}_{\geqslant0}$ to $\mathbb{R}_{\geqslant0}$ such that
\[
\mathbb{P}_{(y_{0},x_{0})}\left(\left.\int_{0}^{1}e^{2\theta s}Y_{s}\mathrm{d}s\in\cdot\thinspace\right\vert \thinspace Y_{1}\right)=K_{(y_{0},x_{0})}(Y_{1},\cdot)
\]
and
\begin{equation}
K_{(y_{0},x_{0})}(z,\mathbb{R}_{>0})=1, \quad \text{for all }z>0.\label{eq2: cond. dist.}
\end{equation}
So
\begin{align}
\mathbb{E}_{(y_{0},x_{0})} & \left[\left.\int_{-\infty}^{\bar{x}}\varrho\left(r-e^{-\theta}x_{0}-\tfrac{m}{\theta}\left(1-e^{-\theta}\right);e^{-2\theta}\int_{0}^{1}e^{2\theta s}Y_{s}\mathrm{d}s\right)\mathrm{d}r\,\right\vert \thinspace Y_{1}\right]\nonumber \\
 & =\int_{0}^{\infty}\left(\int_{-\infty}^{\bar{x}}\varrho\left(r-e^{-\theta}x_{0}-\tfrac{m}{\theta}\left(1-e^{-\theta}\right);e^{-2\theta}w\right)\mathrm{d}r\right)K_{(y_{0},x_{0})}(Y_{1},\mathrm{d}w)\nonumber \\
 & =\int_{-\infty}^{\bar{x}}\left(\int_{0}^{\infty}\varrho\left(r-e^{-\theta}x_{0}-\tfrac{m}{\theta}\left(1-e^{-\theta}\right);e^{-2\theta}w\right)K_{(y_{0},x_{0})}(Y_{1},\mathrm{d}w)\right)\mathrm{d}r.\label{eq3: cond. dist.}
\end{align}
It follows from (\ref{eq1: cond. dist.}), (\ref{eq0: cond. dist.})
and (\ref{eq3: cond. dist.}) that for all $(\bar{y},\bar{x})\in\mathbb{R}_{\geqslant0}\times\mathbb{R}$,
\begin{align}
\mathbb{P}_{(y_{0},x_{0})}\left(Y_{1}<\bar{y},X_{1}<\bar{x}\right) &=\int_{0}^{\bar{y}}\int_{-\infty}^{\bar{x}}\bigg (\int_{0}^{\infty}\varrho\left(r-e^{-\theta}x_{0}-\tfrac{m}{\theta}\left(1-e^{-\theta}\right);e^{-2\theta}w\right) \nonumber \\
&\qquad\qquad\qquad\qquad\qquad\quad\cdot K_{(y_{0},x_{0})}(z,\mathrm{d}w) \bigg )\mathrm{\mathit{f_{Y_{\mathrm{1}}^{y_{\mathrm{0}}}}\mathrm{(}z\mathrm{)}}\mathrm{\mathit{\mathrm{\mathrm{d}\mathit{r}d}}}}z,\label{eq4: cond. dist.}
\end{align}
where $f_{Y_{1}^{y_{0}}}$ is given in \eqref{defi: density for Y^y_t}.
Define
\[
p_{1}(y,x|y_{0},x_{0}):=f_{Y_{1}^{y_{0}}}(y)\int_{0}^{\infty}\varrho\left(x-e^{-\theta}x_{0}-\tfrac{m}{\theta}\left(1-e^{-\theta}\right);e^{-2\theta}w\right)K_{(y_{0},x_{0})}(y,\mathrm{d}w).
\]
By (\ref{eq2: cond. dist.}) and the fact that $f_{Y_{1}^{y_{0}}}(y)$
is strictly positive for all $y>0$ (see Theorem \ref{thm:lower_bound_y_t}),
for each $(y_{0},x_{0})\in\mathbb{R}_{\geqslant0}\times\mathbb{R}$,
the density $p_{1}(y,x|y_{0},x_{0})$ is strictly positive for almost
all $(y,x)\in\mathbb{R}_{\geqslant0}\times\mathbb{R}$. Moreover,
by (\ref{eq4: cond. dist.}), we have
\[
\mathbb{P}_{(y_{0},x_{0})}\left(Y_{1}<\bar{y},X_{1}<\bar{x}\right)=\int_{0}^{\bar{y}}\int_{-\infty}^{\bar{x}}p_{1}(y,x|y_{0},x_{0})\mathrm{d}y\mathrm{d}x
\]
for all $(\bar{y},\bar{x})\in\mathbb{R}_{\geqslant0}\times\mathbb{R}$.
So $p_{1}(\cdot,\cdot|y_{0},x_{0})$ is the density function of $(Y_{t},X_{t})$ given that $(Y_{0},X_{0})=(y_{0},x_{0})$.\\

To prove (b), i.e., the aperiodicity of the skeleton chain $(Y_{n},X_{n})_{n\in\mathbb{Z}_{\geqslant0}}$,
we use a contradiction argument. Suppose that the period $l$ of the
chain $(Y_{n},X_{n})_{n\in\mathbb{Z}_{\geqslant0}}$ is greater than
$1$ (see \cite[p.114]{MR2509253} for a definition of the period of a Markov chain). Then we can find disjoint Borel sets $A_{1},A_{2},\cdots,A_{l}$
such that
\begin{align}
\lambda(A_{i})>0,\ i&=1,\cdots,l,\quad\cup_{i=1}^{l}A_{i}=\mathbb{R}_{\geqslant0}\times\mathbb{R},\label{eq1: contra aperio}\\
&\thickspace\thinspace\mathbf{P}^{1}(y_{0},x_{0},A_{i+1})=1\label{eq2: contra aperio}
\end{align}
for all $(y_{0},x_{0})\in A_{i},\ i=1,\cdots,l-1$, and
\[
\mathbf{P}^{1}(y_{0},x_{0},A_{1})=1
\]
for all $(y_{0},x_{0})\in A_{l}$. By (\ref{eq2: contra aperio}), we have
\[
\iint_{(A_{2})^{c}}p_{1}(y,x|y_{0},x_{0})\mathrm{d}y\mathrm{d}x=0,\quad(y_{0},x_{0})\in A_{1},
\]
and further
\[
\iint_{A_{1}}p_{1}(y,x|y_{0},x_{0})\mathrm{d}y\mathrm{d}x=0,\quad(y_{0},x_{0})\in A_{1}.
\]
However, since for each $(y_{0},x_{0})\in\mathbb{R}_{\geqslant0}\times\mathbb{R}$,
the density $p_{1}(y,x|y_{0},x_{0})$ is strictly positive for almost
all $(y,x)\in\mathbb{R}_{\geqslant0}\times\mathbb{R}$, we must have
$\lambda(A_{1})=0$, which contradicts (\ref{eq1: contra aperio}).
Therefore, the assumption that $l\geqslant2$ is not true. So we have
$l=1$.\\

In view of \cite[Theorem 3.4 (ii)]{MR1174380}, to prove (c), it is
enough to check the Feller property of the skeleton chain $(Y_{n},X_{n})_{n\in\mathbb{Z}_{\geqslant0}}$.
By \cite[Theorem 2.7]{MR1994043}, the two-dimensional process $(Y_{t},X_{t})_{t\geqslant0}$,
as an affine process, possesses the Feller property. So the skeleton
chain $(Y_{n},X_{n})_{n\in\mathbb{Z}_{\geqslant0}}$ has also the
Feller property.\\

Now, we can apply \cite[Theorem 6.3]{MR1174380} and thus find constants
$\delta\in(0,\infty)$, $B\in(0,\infty)$ such that
\begin{equation}
\Vert\mathbf{P}^{n}(y,x,\cdot)-\pi\Vert_{TV}\leqslant B\left(V(y,x)+1\right)e^{-\delta n}\label{eq:ergod_prop_1}
\end{equation}
for all $n\in\mathbb{Z}_{\geqslant0},\thinspace(y,x)\in\mathbb{R}_{\geqslant0}\times\mathbb{R}$. For the remainder of the proof, i.e., to extend the inequality (\ref{eq:ergod_prop_1})
to all $t\geqslant0$, we can interpolate in the same way as in \cite[p.536]{MR1234295},
and we omit the details. This completes the proof. \end{proof}

\section*{Appendix}

\hspace*{\parindent}$Proof \ of \ Lemma \ \emph{\ref{lem1: pure esti}}$. We will complete the proof in three steps.\\

``\emph{Step 1}'': Consider $\rho\geqslant2$ and $\vartheta\in\left[\pi/2-\varepsilon,\pi/2+\varepsilon\right]$,
where $\varepsilon>0$ is a small constant whose exact value will
be determined later. We introduce a change of variables
\[
z:=\left(\left(\frac{1}{\alpha b}+\left(\rho e^{i\vartheta}\right){}^{(1-\alpha)}\right)e^{b(\alpha-1)s}-\frac{1}{\alpha b}\right)^{\frac{1}{1-\alpha}}
\]
and define $\Gamma_{0}:[0,t]\to\mathbb{C}$ by
\[
\Gamma_{0}(s):=\left(\left(\frac{1}{\alpha b}+\left(\rho e^{i\vartheta}\right)^{(1-\alpha)}\right)e^{b(\alpha-1)s}-\frac{1}{\alpha b}\right)^{\frac{1}{1-\alpha}},\quad s\in[0,t].
\]
Then we get
\begin{align}
\int_{0}^{t}v_{s}\left(\rho e^{i\vartheta}\right)\mathrm{d}s & ={\int_{0}^{t}\left(\left(\frac{1}{\alpha b}+\left(\rho e^{i\vartheta}\right){}^{1-\alpha}\right)e^{b(\alpha-1)s}-\frac{1}{\alpha b}\right)^{\tfrac{1}{1-\alpha}}\mathrm{d}s}\notag\\
 & =-\frac{1}{b}\int_{\Gamma_{0}}\left(1+\frac{z^{\alpha-1}}{\alpha b}\right)^{-1}\mathrm{d}z.\label{eq:int_v_s_i_xi}
\end{align}
Next, we derive a lower bound for $\mathrm{Re}\big(\int_{0}^{t}v_s\left(\rho e^{i\vartheta}\right)\mathrm{d}s\big)$.

Let $\Gamma_{0}^{*}$ be the range of $\Gamma_{0}$. Since $\Gamma_{0}^{*}\subset\mathcal{O}$
and $z\mapsto\left(1+z^{\alpha-1}/(\alpha b)\right)^{-1}$ is analytic
in $\mathcal{O}$, we have
\begin{equation}
\int_{\Gamma_{0}}\left(1+\frac{z^{\alpha-1}}{\alpha b}\right)^{-1}\mathrm{d}z=\int_{\rho e^{i\vartheta}}^{\left(\left(\frac{1}{\alpha b}+\left(\rho e^{i\vartheta}\right){}^{(1-\alpha)}\right)e^{b(\alpha-1)t}-\frac{1}{\alpha b}\right)^{\frac{1}{1-\alpha}}}\left(1+\frac{z^{\alpha-1}}{\alpha b}\right)^{-1}\mathrm{d}z.\label{eq1:int_Gamma0}
\end{equation}
Here and after, the notation
\[
\int_{w_{1}}^{w_{2}}\left(1+\frac{z^{\alpha-1}}{\alpha b}\right)^{-1}\mathrm{d}z
\]
means the integral $\int_{\Gamma_{[w_{1},w_{2}]}}\left(1+z^{\alpha-1}/(\alpha b)\right)^{-1}\mathrm{d}z$,
where $\Gamma_{[w_{1},w_{2}]}$ is the directed segment joining $w_{1}$
and $w_{2}$ and is defined by
\[
\Gamma_{[w_{1},w_{2}]}:[0,1]\to\mathbb{C}\quad\text{with}\quad\Gamma_{[w_{1},w_{2}]}(r):=(1-r)w_{1}+rw_{2},\quad r\in[0,1].
\]
By \eqref{eq:int_v_s_i_xi}, (\ref{eq1:int_Gamma0}) and the holomorphicity
of $z\mapsto\left(1+z^{\alpha-1}/(\alpha b)\right)^{-1}$ on $\mathcal{O}$,
we obtain
\begin{align}
\int_{0}^{t}v_{s}\left(\rho e^{i\vartheta}\right)\mathrm{d}s & =\frac{1}{b}\int_{e^{i\vartheta}}^{\rho e^{i\vartheta}}\left(1+\frac{z^{\alpha-1}}{\alpha b}\right)^{-1}\mathrm{d}z\nonumber\\
 & \quad\quad+\frac{1}{b}\int_{\left(\left(\frac{1}{\alpha b}+\left(\rho e^{i\vartheta}\right){}^{(1-\alpha)}\right)e^{b(\alpha-1)t}-\frac{1}{\alpha b}\right)^{\frac{1}{1-\alpha}}}^{e^{i\vartheta}}\left(1+\frac{z^{\alpha-1}}{\alpha b}\right)^{-1}\mathrm{d}z.
\label{eq:int_v_s_gamma_2-1}
\end{align}
Since the second term on the right-hand of \eqref{eq:int_v_s_gamma_2-1}
is continuous in $(t,\rho,\vartheta)\in[1/T,T]\times[2,\infty)\times[\pi/2-\varepsilon,\pi/2+\varepsilon]$
and converges to
\[
\frac{1}{b}\int_{\left(\left(e^{b(\alpha-1)t}-1\right)\frac{1}{\alpha b}\right)^{\frac{1}{1-\alpha}}}^{e^{i\vartheta}}\left(1+\frac{z^{\alpha-1}}{\alpha b}\right)^{-1}\mathrm{d}z
\]
(uniformly in $(t,\vartheta)\in[1/T,T]\times[\pi/2-\varepsilon,\pi/2+\varepsilon]$)
as $\rho\to\infty$, it must be bounded, i.e., we have
\begin{equation}
\left\vert \frac{1}{b}\int_{\left(\left(e^{b(\alpha-1)t}-1\right)\frac{1}{\alpha b}\right)^{\frac{1}{1-\alpha}}}^{e^{i\vartheta}}\left(1+\frac{z^{\alpha-1}}{\alpha b}\right)^{-1}\mathrm{d}z\right\vert \leqslant c_{3}\label{eq:bound_of_I_2-1}
\end{equation}
for all $t\in[1/T,T]$, $\vartheta\in[\pi/2-\varepsilon,\pi/2+\varepsilon]$
and $\rho\geqslant2$, where $c_{3}=c_{3}(\varepsilon,T)>0$ is some
constant.

Now, define $\Gamma_{\vartheta}:[0,1]\to\mathbb{C}$ by
\[
\Gamma_{\vartheta}(r):=(1-r)e^{i\vartheta}+r\rho e^{i\vartheta},\quad r\in[0,1],
\]
and let $\Gamma_{\vartheta}^{*}$ be the range of $\Gamma_{\vartheta}$.
We can calculate the real part of the first integral appearing on
the right-hand side of (\ref{eq:int_v_s_gamma_2-1}) by
\begin{align}
\mathrm{Re} & \left(\int_{e^{i\vartheta}}^{\rho e^{i\vartheta}}\left(1+\frac{z^{\alpha-1}}{\alpha b}\right)^{-1}\mathrm{d}z\right)\nonumber \\
 & =\mathrm{Re}\left(\int_{\Gamma_{\vartheta}}\left(1+\frac{z^{\alpha-1}}{\alpha b}\right)^{-1}\mathrm{d}z\right)\nonumber \\
 & =\mathrm{Re}\left(\int_{0}^{1}\left(1+\frac{\left(\Gamma_{v}(r)\right)^{\alpha-1}}{\alpha b}\right)^{-1}\partial_{r}\Gamma_{\vartheta}(r)\mathrm{d}r\right)\nonumber \\
 & =\mathrm{Re}\left(\int_{0}^{1}\frac{(\rho-1)e^{i\vartheta}}{1+\left(\Gamma_{\vartheta}(r)\right)^{\alpha-1}(\alpha b)^{-1}}\mathrm{d}r\right)\nonumber \\
 & =\int_{0}^{1}\left\vert \frac{(\rho-1)e^{i\vartheta}}{1+\left(\Gamma_{\vartheta}(r)\right)^{\alpha-1}(\alpha b)^{-1}}\right\vert \cos\left(\mathrm{Arg}\left(\frac{(\rho-1)e^{i\vartheta}}{1+\left(\Gamma_{\vartheta}(r)\right)^{\alpha-1}(\alpha b)^{-1}}\right)\right)\mathrm{d}r.\label{eq:estiofnormcos-1}
\end{align}
For $r\in[0,1]$, we have
\begin{align}
\mathrm{Arg}\left(1+\left(\Gamma_{\vartheta}(0)\right)^{\alpha-1}(\alpha b)^{-1}\right) & \leqslant\mathrm{Arg}\left(1+\left(\Gamma_{\vartheta}(r)\right)^{\alpha-1}(\alpha b)^{-1}\right)\nonumber \\
 & \leqslant\mathrm{Arg}\left(1+\left(\Gamma_{\vartheta}(1)\right)^{\alpha-1}(\alpha b)^{-1}\right).\label{eq1:cal-Arg}
\end{align}
Define $\delta_{\vartheta}$ by
\begin{align}
\delta_{\vartheta} & :=(\alpha-1)\vartheta-\mathrm{Arg}\left(1+\left(\Gamma_{\vartheta}(0)\right)^{\alpha-1}(\alpha b)^{-1}\right)\nonumber \\
 & \thinspace=(\alpha-1)\vartheta-\mathrm{Arg}\left(1+e^{i(\alpha-1)\vartheta}(\alpha b)^{-1}\right)\in(0,(\alpha-1)\vartheta).\label{eq2:cal-Arg}
\end{align}
It is easy to see that
\begin{equation}
\mathrm{Arg}\left(1+\left(\Gamma_{\vartheta}(1)\right)^{\alpha-1}(\alpha b)^{-1}\right)<(\alpha-1)\vartheta.\label{eq3:cal-Arg}
\end{equation}
By (\ref{eq1:cal-Arg}), (\ref{eq2:cal-Arg}) and (\ref{eq3:cal-Arg}),
we get
\[
\mathrm{Arg}\left(1+\left(\Gamma_{\vartheta}(r)\right)^{\alpha-1}(\alpha b)^{-1}\right)\in[(\alpha-1)\vartheta-\delta_{\vartheta},(\alpha-1)\vartheta),\quad r\in[0,1].
\]
As a result,
\begin{equation}
\mathrm{Arg}\left(\frac{(\rho-1)e^{i\vartheta}}{1+\left(\Gamma_{\vartheta}(r)\right)^{\alpha-1}(\alpha b)^{-1}}\right)\in\left((2-\alpha)\vartheta,(2-\alpha)\vartheta+\delta_{\vartheta}\right],\quad r\in[0,1].\label{Arg-range 1}
\end{equation}
Note that $0<\delta_{\pi/2}<(\alpha-1)\pi/2$ by (\ref{eq2:cal-Arg}).
Since $\delta_{\vartheta}$ is continuous in $\vartheta$, we see
that
\[
0<\lim_{\vartheta\to\frac{\pi}{2}}\left\{ (2-\alpha)\vartheta+\delta_{\vartheta}\right\} =(2-\alpha)\frac{\pi}{2}+\delta_{\frac{\pi}{2}}<\frac{\pi}{2}.
\]
Set
\[
c_{4}:=\frac{\pi}{2}-\left((2-\alpha)\frac{\pi}{2}+\delta_{\frac{\pi}{2}}\right)\in\left(0,\frac{\pi}{2}\right).
\]
Now, we choose the constant $\varepsilon_{0}>0$ small enough such
that
\begin{equation}
0<(2-\alpha)\vartheta<(2-\alpha)\vartheta+\delta_{\vartheta}\le\frac{\pi}{2}-\frac{c_{4}}{2}\label{Arg-range 2}
\end{equation}
for all $\vartheta\in\left[\pi/2-\varepsilon_{0},\pi/2+\varepsilon_{0}\right]$.
It follows from (\ref{Arg-range 1}) and (\ref{Arg-range 2}) that
for all $\vartheta\in\left[\pi/2-\varepsilon_{0},\pi/2+\varepsilon_{0}\right]$
and $r\in[0,1]$,
\begin{equation}
\cos\left(\mathrm{Arg}\left(\frac{(\rho-1)e^{i\vartheta}}{1+\left(\Gamma_{\vartheta}(r)\right)^{\alpha-1}(\alpha b)^{-1}}\right)\right)\geqslant\cos\left(\frac{\pi}{2}-\frac{c_{4}}{2}\right)=:c_{5}>0.\label{eq:estiofcostotoal-1}
\end{equation}
In view of (\ref{eq:estiofnormcos-1}) and (\ref{eq:estiofcostotoal-1}),
we get
\begin{align}
\mathrm{Re} & \left(\int_{e^{i\vartheta}}^{\rho e^{i\vartheta}}\left(1+\frac{z^{\alpha-1}}{\alpha b}\right)^{-1}\mathrm{d}z\right)\nonumber \\
 & \geqslant\cos\left(\frac{\pi}{2}-\frac{c_{4}}{2}\right)\int_{0}^{1}\left\vert \frac{(\rho-1)e^{i\vartheta}}{1+\left(\Gamma_{\vartheta}(r)\right)^{\alpha-1}(\alpha b)^{-1}}\right\vert \mathrm{d}r\nonumber \\
 & =c_{5}\int_{0}^{1}\frac{\rho-1}{\left\vert 1+\left(\Gamma_{\vartheta}(r)\right)^{\alpha-1}(\alpha b)^{-1}\right\vert }\mathrm{d}r\geqslant c_{5}\int_{0}^{1}\frac{\rho-1}{1+\left\vert \left(\Gamma_{\vartheta}(r)\right)^{\alpha-1}(\alpha b)^{-1}\right\vert }\mathrm{d}r\nonumber \\
 & =c_{5}\int_{0}^{1}\frac{\rho-1}{1+\left(1-r+r\rho\right)^{\alpha-1}(\alpha b)^{-1}}\mathrm{d}r=c_{5}\int_{0}^{\rho-1}\frac{1}{1+\left(1+r\right)^{\alpha-1}(\alpha b)^{-1}}\mathrm{d}r\nonumber \\
 & \geqslant\frac{c_{5}}{1+(\alpha b)^{-1}}\int_{0}^{\rho-1}\frac{1}{\left(1+r\right)^{\alpha-1}}\mathrm{d}r=c_{5}\alpha b(1+\alpha b)^{-1}(2-\alpha)^{-1}\left(\rho{}^{2-\alpha}-1\right).\label{eq:bound_of_I_1}
\end{align}
Combining (\ref{eq:int_v_s_gamma_2-1}), (\ref{eq:bound_of_I_2-1})
and \eqref{eq:bound_of_I_1} yields
\begin{equation}
\mathrm{Re}\left(\int_{0}^{t}v_{s}\left(\rho e^{i\vartheta}\right)\mathrm{d}s\right)\geqslant c_{6}\rho^{2-\alpha}-c_{7},\quad\rho\geqslant2,\,\vartheta\in\left[\frac{\pi}{2}-\varepsilon_{0},\frac{\pi}{2}+\varepsilon_{0}\right],\,t\in[1/T,T],\label{eq:estiofintvs1}
\end{equation}
where $c_{6},\,c_{7}>0$ are constants that depend only on $a,\,b,\,\alpha,\,\varepsilon_{0}$
and $T$. \\

``\emph{Step 2}'': The case with $\rho\geqslant2$ and $\vartheta\in\left[-\pi/2-\varepsilon_{0},-\pi/2+\varepsilon_{0}\right]$
can be similarly treated, and we thus get
\begin{equation}
\mathrm{Re}\left(\int_{0}^{t}v_{s}\left(\rho e^{i\vartheta}\right)\mathrm{d}s\right)\geqslant c_{8}\rho^{2-\alpha}-c_{9}\label{eq:estiofintvs2}
\end{equation}
for all $\rho\geqslant2,\,\vartheta\in\left[-\pi/2-\varepsilon_{0},-\pi/2+\varepsilon_{0}\right]$
and $t\in[1/T,T]$, where $c_{8},\ c_{9}>0$ are constants depending
only on $a,\,b,\,\alpha,\,\varepsilon_{0}$ and $T$.\\

``\emph{Step 3}'': Since $\int_{0}^{t}v_{s}\left(\rho e^{i\vartheta}\right)\mathrm{d}s$
is continuous in $(t,\rho,\vartheta)$, we can find a constant $c_{10}>0$
such that
\begin{equation}
\mathrm{Re}\left(\int_{0}^{t}v_{s}\left(\rho e^{i\vartheta}\right)\mathrm{d}s\right)\geqslant-c_{10}\label{eq:estiofintvs3}
\end{equation}
for all $0\leqslant\rho\leqslant2,\,\vartheta\in\left[-\pi/2-\varepsilon_{0},-\pi/2+\varepsilon_{0}\right]\cup\left[\pi/2-\varepsilon_{0},\pi/2+\varepsilon_{0}\right]$
and $t\in[1/T,T]$. The estimate (\ref{esti: aim lem1 in appendix})
now follows from \eqref{eq:estiofintvs1}, (\ref{eq:estiofintvs2})
and (\ref{eq:estiofintvs3}). \qed

\bigskip
\hspace*{\parindent}$Proof \ of \ Lemma \ \emph{\ref{lem2: pure esti}}$. Let $\rho\ge2$ and $\vartheta\in\left[\pi/2+\varepsilon_{0},\pi\right]$.
Our aim is to show
\begin{equation}
\left|\int_{0}^{t}v_{s}(\rho e^{i\vartheta})\mathrm{d}s\right|\leqslant C_{3}+C_{4}\rho^{2-\alpha}\label{aim lem2 pure esti}
\end{equation}
for some constants $C_{3},\,C_{4}>0$ that depend only on $a,\,b,\,\alpha,\,\varepsilon_{0}$
and $t$. Using the change of variables
\[
z:=\left(\frac{1}{\alpha b}+\left(\rho e^{i\vartheta}\right){}^{(1-\alpha)}\right)e^{b(\alpha-1)s}-\frac{1}{\alpha b},
\]
we get
\begin{align}
\int_{0}^{t}v_{s}(\rho e^{i\vartheta})\mathrm{d}s & =\int_{0}^{t}\left(\left(\frac{1}{\alpha b}+\left(\rho e^{i\vartheta}\right)^{1-\alpha}\right)e^{b(\alpha-1)s}-\frac{1}{\alpha b}\right)^{\tfrac{1}{1-\alpha}}\mathrm{d}s\nonumber \\
 & =\frac{1}{b(\alpha-1)}\int_{\left(\rho e^{i\vartheta}\right)^{1-\alpha}}^{\left(\frac{1}{\alpha b}+\left(\rho e^{i\vartheta}\right)^{1-\alpha}\right)e^{b(\alpha-1)t}-\frac{1}{\alpha b}}z^{\tfrac{1}{1-\alpha}}\left(z+\frac{1}{\alpha b}\right)^{-1}\mathrm{d}z.\label{eq:final lemma transform}
\end{align}
Since $\vartheta\in\left[\pi/2+\varepsilon_{0},\pi\right]$, we have
$(1-\alpha)\vartheta\in\left[(1-\alpha)\pi,(1-\alpha)(\pi/2+\varepsilon_{0})\right]$,
which implies
\begin{equation}
\left|\sin\left((1-\alpha)\vartheta\right)\right|\geqslant\min\left\{ \sin\left((\alpha-1)\pi\right),\sin\left((\alpha-1)(\pi/2+\varepsilon_{0})\right)\right\} =:c_{1}>0.\label{esti: sin (1-a)v}
\end{equation}
Note that $z\mapsto z^{1/(1-\alpha)}\left(z+1/(\alpha b)\right)^{-1}$
is holomorphic on $\mathcal{O}$. So we have
\begin{align}
 & \int_{(\rho e^{i\vartheta})^{1-\alpha}}^{(\tfrac{1}{\alpha b}+(\rho e^{i\vartheta})^{1-\alpha})e^{b(\alpha-1)t}-\tfrac{1}{\alpha b}}z^{\tfrac{1}{1-\alpha}}\left(z+\frac{1}{\alpha b}\right)^{-1}\mathrm{d}z\nonumber \\
 & \quad=\int_{(\rho e^{i\vartheta})^{1-\alpha}}^{(\rho e^{i\vartheta})^{1-\alpha}+2}z^{\tfrac{1}{1-\alpha}}\left(z+\frac{1}{\alpha b}\right)^{-1}\mathrm{d}z\nonumber \\
 & \qquad+\int_{(\rho e^{i\vartheta})^{1-\alpha}+2}^{\left(\tfrac{1}{\alpha b}+(\rho e^{i\vartheta})^{1-\alpha}\right)e^{b(\alpha-1)t}-\tfrac{1}{\alpha b}}z^{\tfrac{1}{1-\alpha}}\left(z+\frac{1}{\alpha b}\right)^{-1}\mathrm{d}z.\label{eq:int_bound}
\end{align}
Since
\begin{align*}
\lim_{\rho\to\infty}\int_{(\rho e^{i\vartheta})^{1-\alpha}+2}^{\left(\tfrac{1}{\alpha b}+(\rho e^{i\vartheta})^{1-\alpha}\right)e^{b(\alpha-1)t}-\tfrac{1}{\alpha b}}&z^{\tfrac{1}{1-\alpha}}\left(z+\frac{1}{\alpha b}\right)^{-1}\mathrm{d}z\\
&=\int_{2}^{\tfrac{1}{\alpha b}\left(e^{b(\alpha-1)t}-1\right)}z^{\tfrac{1}{1-\alpha}}\left(z+\frac{1}{\alpha b}\right)^{-1}\mathrm{d}z,
\end{align*}
where the convergence is uniform in $\vartheta\in\left[\pi/2+\varepsilon_{0},\pi\right]$,
we can find a constant $c_{2}>0$ such that
\begin{equation}
\left|\int_{(\rho e^{i\vartheta})^{1-\alpha}+2}^{\left(\tfrac{1}{\alpha b}+(\rho e^{i\vartheta})^{1-\alpha}\right)e^{b(\alpha-1)t}-\tfrac{1}{\alpha b}}z^{\tfrac{1}{1-\alpha}}\left(z+\frac{1}{\alpha b}\right)^{-1}\mathrm{d}z\right|\leqslant c_{2}\label{esti:final lemma}
\end{equation}
for all $\rho\geqslant2$ and $\vartheta\in\left[\pi/2+\varepsilon_{0},\pi\right]$.

We now proceed to estimate the first term on the right-hand side of
(\ref{eq:int_bound}). Define
\[
\Gamma_{\vartheta,\rho}(r):=\left(\rho e^{i\vartheta}\right)^{1-\alpha}+r,\quad r\in\left[0,2\right].
\]
By (\ref{esti: sin (1-a)v}), we have
\begin{equation}
|\rho^{1-\alpha}e^{(1-\alpha)i\vartheta}+r|\geqslant\rho^{1-\alpha}|\sin\left((1-\alpha)\vartheta\right)|\geqslant c_{1}\rho^{1-\alpha},\label{esti1: final}
\end{equation}
where $r\in[0,2]$ and $\vartheta\in\left[\pi/2+\varepsilon_{0},\pi\right]$.
If $r\in[2\rho^{1-\alpha},2],$ then
\begin{equation}
|\rho^{1-\alpha}e^{(1-\alpha)i\vartheta}+r|\geqslant r-\rho^{1-\alpha}\geqslant\frac{r}{2}.\label{esti2: final}
\end{equation}
It follows from (\ref{esti1: final}) and (\ref{esti2: final}) that
for $\rho\geqslant2$ and $\vartheta\in\left[\pi/2+\varepsilon_{0},\pi\right]$,
\begin{align}
 & \left|\int_{(\rho e^{i\vartheta})^{1-\alpha}}^{(\rho e^{i\vartheta})^{1-\alpha}+2}z^{\tfrac{1}{1-\alpha}}\left(z+\frac{1}{\alpha b}\right)^{-1}\mathrm{d}z\right|\nonumber \\
 & \quad=\left|\int_{0}^{2}\left(\Gamma_{\vartheta,\rho}(r)\right)^{\tfrac{1}{1-\alpha}}\left(\Gamma_{\vartheta,\rho}(r)+\frac{1}{\alpha b}\right)^{-1}\mathrm{d}r\right|\notag\\
 & \quad\leqslant c_{3}\int_{0}^{2}\vert\Gamma_{\vartheta,\rho}(r)\vert^{\tfrac{1}{1-\alpha}}\mathrm{d}r=c_{3}\int_{0}^{2}\left\vert \rho^{1-\alpha}e^{(1-\alpha)i\vartheta}+r\right\vert ^{\tfrac{1}{1-\alpha}}\mathrm{d}r\nonumber \\
 & \quad=c_{3}\int_{0}^{2\rho^{1-\alpha}}\left\vert \rho^{1-\alpha}e^{(1-\alpha)i\vartheta}+r\right\vert ^{\tfrac{1}{1-\alpha}}\mathrm{d}r\nonumber \\
 & \qquad+c_{3}\int_{2\rho^{1-\alpha}}^{2}\left\vert \rho^{1-\alpha}e^{(1-\alpha)i\vartheta}+r\right\vert ^{\tfrac{1}{1-\alpha}}\mathrm{d}r\nonumber \\
 & \quad\leqslant c_{3}\int_{0}^{2\rho^{1-\alpha}}\left(c_{1}\rho^{1-\alpha}\right)^{\tfrac{1}{1-\alpha}}\mathrm{d}r+c_{3}2^{1/(\alpha-1)}\int_{2\rho^{1-\alpha}}^{2}r^{\tfrac{1}{1-\alpha}}\mathrm{d}r\nonumber \\
 & \quad=2c_{3}c_{1}^{1/(1-\alpha)}\rho^{2-\alpha}+c_{3}2^{1/(\alpha-1)}\left.\tfrac{\alpha-1}{\alpha-2}r^{\tfrac{2-\alpha}{1-\alpha}}\right\vert _{r=2\rho^{1-\alpha}}^{2}\leqslant c_{4}\rho^{2-\alpha}+c_{5,}\label{eq:estofcontourint}
\end{align}
where $c_{3},\,c_{4},\,c_{5}>0$ are some constants. Combining (\ref{eq:final lemma transform}),
(\ref{eq:int_bound}), (\ref{esti:final lemma}) and (\ref{eq:estofcontourint})
yields (\ref{aim lem2 pure esti}). \qed \\

\textbf{Acknowledgements.} The author J. Kremer would like to thank the University of Wuppertal for the financial support through a doctoral funding program.

\bibliographystyle{amsplain}

\begin{thebibliography}{10}

\bibitem{MR2995525}
Mohamed~Ben Alaya and Ahmed Kebaier, \emph{Parameter estimation for the
  square-root diffusions: ergodic and nonergodic cases}, Stoch. Models
  \textbf{28} (2012), no.~4, 609--634. \MR{2995525}

\bibitem{MR3216637}
M{\'a}ty{\'a}s Barczy, Leif D{\"o}ring, Zenghu Li, and Gyula Pap,
  \emph{Parameter estimation for a subcritical affine two factor model}, J.
  Statist. Plann. Inference \textbf{151/152} (2014), 37--59. \MR{3216637}

\bibitem{MR3254346}
\bysame, \emph{Stationarity and ergodicity for an affine two-factor model},
  Adv. in Appl. Probab. \textbf{46} (2014), no.~3, 878--898. \MR{3254346}

\bibitem{MR3452993}
M{\'a}ty{\'a}s Barczy and Gyula Pap, \emph{Asymptotic properties of
  maximum-likelihood estimators for {H}eston models based on continuous time
  observations}, Statistics \textbf{50} (2016), no.~2, 389--417. \MR{3452993}

\bibitem{MR785475}
John~C. Cox, Jonathan~E. Ingersoll, Jr., and Stephen~A. Ross, \emph{A theory of
  the term structure of interest rates}, Econometrica \textbf{53} (1985),
  no.~2, 385--407.

\bibitem{MR1994043}
D.~Duffie, D.~Filipovi{\'c}, and W.~Schachermayer, \emph{Affine processes and
  applications in finance}, Ann. Appl. Probab. \textbf{13} (2003), no.~3,
  984--1053.

\bibitem{MR1793362}
Darrell Duffie, Jun Pan, and Kenneth Singleton, \emph{Transform analysis and
  asset pricing for affine jump-diffusions}, Econometrica \textbf{68} (2000),
  no.~6, 1343--1376. \MR{1793362 (2001m:91081)}

\bibitem{MR3264444}
Xan Duhalde, Cl{\'e}ment Foucart, and Chunhua Ma, \emph{On the hitting times of
  continuous-state branching processes with immigration}, Stochastic Process.
  Appl. \textbf{124} (2014), no.~12, 4182--4201. \MR{3264444}

\bibitem{MR1145236}
Gerald~B. Folland, \emph{Fourier analysis and its applications}, The Wadsworth
  \& Brooks/Cole Mathematics Series, Wadsworth \& Brooks/Cole Advanced Books \&
  Software, Pacific Grove, CA, 1992. \MR{1145236}

\bibitem{MR2513384}
Eberhard Freitag and Rolf Busam, \emph{Complex analysis}, second ed.,
  Universitext, Springer-Verlag, Berlin, 2009. \MR{2513384}

\bibitem{MR2584896}
Zongfei Fu and Zenghu Li, \emph{Stochastic equations of non-negative processes
  with jumps}, Stochastic Process. Appl. \textbf{120} (2010), no.~3, 306--330.
  \MR{2584896 (2011d:60178)}

\bibitem{Heston}
Steven~L. Heston, \emph{A closed-form solution for options with stochastic
  volatility with applications to bond and currency options}, Review of
  Financial Studies (1993), 6:327?343.

\bibitem{MR3167406}
Peng Jin, Vidyadhar Mandrekar, Barbara R{\"u}diger, and Chiraz Trabelsi,
  \emph{Positive {H}arris recurrence of the {CIR} process and its
  applications}, Commun. Stoch. Anal. \textbf{7} (2013), no.~3, 409--424.
  \MR{3167406}

\bibitem{MR3451177}
Peng Jin, Barbara R{\"u}diger, and Chiraz Trabelsi, \emph{Exponential
  ergodicity of the jump-diffusion {CIR} process}, Stochastics of environmental
  and financial economics---{C}entre of {A}dvanced {S}tudy, {O}slo, {N}orway,
  2014--2015, Springer Proc. Math. Stat., vol. 138, Springer, Cham, 2016,
  pp.~285--300. \MR{3451177}

\bibitem{MR3437080}
\bysame, \emph{Positive {H}arris recurrence and exponential ergodicity of the
  basic affine jump-diffusion}, Stoch. Anal. Appl. \textbf{34} (2016), no.~1,
  75--95. \MR{3437080}

\bibitem{MR1876169}
Olav Kallenberg, \emph{Foundations of modern probability}, second ed.,
  Probability and its Applications (New York), Springer-Verlag, New York, 2002.

\bibitem{MR2779872}
Martin Keller-Ressel, \emph{Moment explosions and long-term behavior of affine
  stochastic volatility models}, Math. Finance \textbf{21} (2011), no.~1,
  73--98. \MR{2779872}

\bibitem{MR2922631}
Martin Keller-Ressel and Aleksandar Mijatovi{\'c}, \emph{On the limit
  distributions of continuous-state branching processes with immigration},
  Stochastic Process. Appl. \textbf{122} (2012), no.~6, 2329--2345.
  \MR{2922631}

\bibitem{MR2390186}
Martin Keller-Ressel and Thomas Steiner, \emph{Yield curve shapes and the
  asymptotic short rate distribution in affine one-factor models}, Finance
  Stoch. \textbf{12} (2008), no.~2, 149--172. \MR{2390186}

\bibitem{MR3343292}
Zenghu Li and Chunhua Ma, \emph{Asymptotic properties of estimators in a stable
  {C}ox-{I}ngersoll-{R}oss model}, Stochastic Process. Appl. \textbf{125}
  (2015), no.~8, 3196--3233. \MR{3343292}

\bibitem{MR2509253}
Sean Meyn and Richard~L. Tweedie, \emph{Markov chains and stochastic
  stability}, second ed., Cambridge University Press, Cambridge, 2009, With a
  prologue by Peter W. Glynn. \MR{2509253}

\bibitem{MR1174380}
Sean~P. Meyn and R.~L. Tweedie, \emph{Stability of {M}arkovian processes. {I}.
  {C}riteria for discrete-time chains}, Adv. in Appl. Probab. \textbf{24}
  (1992), no.~3, 542--574. \MR{1174380 (93g:60143)}

\bibitem{MR1234294}
\bysame, \emph{Stability of {M}arkovian processes. {II}. {C}ontinuous-time
  processes and sampled chains}, Adv. in Appl. Probab. \textbf{25} (1993),
  no.~3, 487--517. \MR{1234294 (94g:60136)}

\bibitem{MR1234295}
\bysame, \emph{Stability of {M}arkovian processes. {III}. {F}oster-{L}yapunov
  criteria for continuous-time processes}, Adv. in Appl. Probab. \textbf{25}
  (1993), no.~3, 518--548. \MR{1234295 (94g:60137)}

\bibitem{MR1614256}
Ludger Overbeck, \emph{Estimation for continuous branching processes}, Scand.
  J. Statist. \textbf{25} (1998), no.~1, 111--126. \MR{1614256}

\bibitem{MR1455180}
Ludger Overbeck and Tobias Ryd{\'e}n, \emph{Estimation in the
  {C}ox-{I}ngersoll-{R}oss model}, Econometric Theory \textbf{13} (1997),
  no.~3, 430--461. \MR{1455180}

\bibitem{MR3185174}
Ken-iti Sato, \emph{L\'evy processes and infinitely divisible distributions},
  Cambridge Studies in Advanced Mathematics, vol.~68, Cambridge University
  Press, Cambridge, 2013, Translated from the 1990 Japanese original, Revised
  edition of the 1999 English translation. \MR{3185174}

\bibitem{S-R}
Rong Situ, \emph{Theory of stochastic differential equations with jumps and
  applications}, Springer, vol.~1, Springer-Verlag, 2010.

\bibitem{MR3235239}
Oldrich Vasicek, \emph{An equilibrium characterization of the term structure
  [reprint of {J}. {F}inanc. {E}con. {\bf 5} (1977), no. 2, 177--188]},
  Financial risk measurement and management, Internat. Lib. Crit. Writ. Econ.,
  vol. 267, Edward Elgar, Cheltenham, 2012, pp.~724--735. \MR{3235239}

\end{thebibliography}
%%
% requires a BiBTeX file sample.bib

\end{document}